\numberwithin{equation}{section}
\definecolor{gr}{rgb}   {0.,   0.69,   0.23 }
\definecolor{bl}{rgb}   {0.,   0.5,   1. }
\definecolor{mg}{rgb}   {0.85,  0.,    0.85}
\definecolor{or}{rgb}   {0.9,  0.5,   0.}
\definecolor{webred}{rgb}{0.75,0,0}
\definecolor{webgreen}{rgb}{0,0.75,0}
\newtheorem{theorem}{Theorem}[section]
\newtheorem{lemma}[theorem]{Lemma}
\newtheorem{remark}[theorem]{Remark}
\newtheorem{proposition}[theorem]{Proposition}
\newtheorem{corollary}[theorem]{Corollary}
\newtheorem{definition}[theorem]{Definition\rm}
\newcommand{\dist}{\mathsf{dist}}
\renewcommand{\Re}{\mathsf{Re}}
\newcommand{\loc}{\mathsf{loc}}
\newcommand{\e}{\mathsf{e}}
\newcommand{\x}{\mathbf{x}}
\newcommand{\y}{\mathbf{y}}
\newcommand{\n}{\mathbf{n}}
\newcommand{\Dom}{\mathsf{Dom}}
\newcommand{\supp}{\mathsf{supp}\,}
\newcommand{\spe}{\mathsf{sp}_{\mathsf{ess}}}
\def\Q{\mathfrak{H}}
\def\Qk{\Q^{(k)}}
\def\eig#1#2{\lambda_{#1,#2}}
\def\eigone#1{\eig{1}{#1}}
\newcommand{\N}{\mathbb{N}}
\newcommand{\Z}{\mathbb{Z}}
\newcommand{\R}{\mathbb{R}}
\newcommand{\A}{\mathbf{A}}
\newcommand{\B}{\mathbf{B}}
\newcommand{\Neu}{\mathbf{Neu}}
\newcommand{\sH}{\mathsf{H}}
\newcommand{\sL}{\mathsf{L}}
\newcommand{\eps}{\varepsilon}
\newcommand{\dx}{\,\mathrm{d}}
\begin{document}

\title[Optimal magnetic Sobolev constants]{Optimal magnetic Sobolev constants\\ in the semiclassical limit}

\author{S. Fournais}
\address[S. Fournais]{Aarhus University, Ny Munkegade 118, DK-8000 Aarhus~C, Denmark}
\email{fournais@math.au.dk}
\date{\today}
\author{N. Raymond}
\address[N. Raymond]{IRMAR, Universit\'e de Rennes 1, Campus de Beaulieu, F-35042 Rennes cedex, France}
\email{nicolas.raymond@univ-rennes1.fr}

\maketitle
\begin{abstract}
This paper is devoted to the semiclassical analysis of the best constants in the magnetic Sobolev embeddings in the case of a bounded domain of the plane carrying Dirichlet conditions. We provide quantitative estimates of these constants (with an explicit dependence on the semiclassical parameter) and analyze the exponential localization in $\sL^\infty$-norm of the corresponding minimizers near the magnetic wells.
\end{abstract}

\section{Preliminary considerations and main results}

We are interested in the following minimization problem. For the sake of simplicity, we consider a simply connected bounded domain $\Omega\subset \R^2$, $p\in[2,+\infty)$, $h>0$
and a smooth vector potential $\A$ on $\overline{\Omega}$. We introduce the following \enquote{nonlinear eigenvalue} (or optimal magnetic Sobolev constant):
\begin{equation}\label{inf}
\lambda(\Omega, \A, p,h)=\inf_{\psi\in \sH^1_{0}(\Omega), \psi\neq 0}\frac{\mathcal{Q}_{h,\A}(\psi)}{\left(\int_{\Omega}|\psi|^p\dx x\right)^{\frac{2}{p}}}=\inf_{\underset{ \|\psi\|_{\sL^p(\Omega)}=1}{\psi\in \sH^1_{0}(\Omega),}}\mathcal{Q}_{h,\A}(\psi),
\end{equation}
where the magnetic quadratic form is defined by
$$\forall \psi\in\sH^1_{0}(\Omega),\quad\mathcal{Q}_{h,\A}(\psi)=\int_{\Omega}|(-ih\nabla+\A)\psi|^2\dx\x.$$
The Dirichlet realization of the magnetic Laplacian on $\Omega$ (defined as the Friedrichs extension) is denoted by $\mathcal{L}_{h,\A}$ whose domain is
$$\Dom\left(\mathcal{L}_{h,\A}\right)=\{\psi\in\Dom(\mathcal{Q}_{h,\A})=\sH^1_{0}(\Omega) : (-ih\nabla+\A)^2\psi\in\sL^2(\Omega)\}.$$ 
We recall that $\B=\nabla\times\A$ is called the magnetic field (with the notation $\A = (A_1, A_2)$, $\B = \partial_2 A_1 - \partial_1 A_2$). Let us here notice that there exists a vast literature dealing with the case $p=2$. In this case $\lambda(\Omega, \A, 2,h)$ is the lowest eigenvalue of the magnetic Laplacian. On this subject, the reader may consult the books and reviews \cite{FouHel10, HelKo14, Ray14}.
\subsection{Motivations and context}
Before describing the motivations of this paper, let us recall some basic facts concerning the minimization problem \eqref{inf}.
\begin{lemma}
The infimum in \eqref{inf} is attained.
\end{lemma}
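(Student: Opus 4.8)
The plan is to run the direct method of the calculus of variations, the crucial compactness input being the Rellich--Kondrachov theorem: since $\Omega$ is bounded, the embedding $\sH^1_0(\Omega)\hookrightarrow\sL^p(\Omega)$ is compact for every $p\in[2,+\infty)$ (here the two-dimensionality of $\Omega$ enters, allowing all $p<+\infty$). Note first that $\lambda(\Omega,\A,p,h)\in[0,+\infty)$: it is nonnegative since $\mathcal{Q}_{h,\A}\ge0$, and finite because any fixed nonzero element of $\sH^1_0(\Omega)$ is admissible after normalization.

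First I would pick a minimizing sequence $(\psi_n)_{n\in\N}\subset\sH^1_0(\Omega)$ with $\|\psi_n\|_{\sL^p(\Omega)}=1$ and $\mathcal{Q}_{h,\A}(\psi_n)\to\lambda(\Omega,\A,p,h)$, and show it is bounded in $\sH^1_0(\Omega)$. Since $\A$ is smooth on the compact set $\overline{\Omega}$ it is bounded, so the triangle inequality in $\sL^2(\Omega)$ yields
$$h\|\nabla\psi_n\|_{\sL^2(\Omega)}\le\mathcal{Q}_{h,\A}(\psi_n)^{1/2}+\|\A\|_{\sL^\infty(\Omega)}\|\psi_n\|_{\sL^2(\Omega)},$$
and, $\Omega$ being bounded, $\|\psi_n\|_{\sL^2(\Omega)}\le C_\Omega\|\psi_n\|_{\sL^p(\Omega)}=C_\Omega$. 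Hence $(\nabla\psi_n)$, and therefore $(\psi_n)$, is bounded in $\sL^2(\Omega)$, so $(\psi_n)$ is bounded in $\sH^1_0(\Omega)$.

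Up to extracting a subsequence I may then assume $\psi_n\rightharpoonup\psi$ weakly in $\sH^1_0(\Omega)$ and, by the compact embedding, $\psi_n\to\psi$ strongly in $\sL^p(\Omega)$ and in $\sL^2(\Omega)$. Strong $\sL^p$-convergence gives $\|\psi\|_{\sL^p(\Omega)}=1$, so $\psi\neq0$ is an admissible competitor. To conclude I would pass to the limit in
$$\mathcal{Q}_{h,\A}(\psi_n)=h^2\|\nabla\psi_n\|_{\sL^2(\Omega)}^2+2h\,\Re\!\int_\Omega\big(-i\A\cdot\nabla\psi_n\big)\overline{\psi_n}\dx\x+\int_\Omega|\A|^2|\psi_n|^2\dx\x.$$
The last term converges to $\int_\Omega|\A|^2|\psi|^2\dx\x$ because $\psi_n\to\psi$ in $\sL^2(\Omega)$ and $|\A|^2\in\sL^\infty(\Omega)$; the cross term converges to its analogue for $\psi$ because $\A\,\overline{\psi_n}\to\A\,\overline{\psi}$ strongly in $\sL^2(\Omega)$ while $\nabla\psi_n\rightharpoonup\nabla\psi$ weakly in $\sL^2(\Omega)$; and $\liminf_n\|\nabla\psi_n\|_{\sL^2(\Omega)}^2\ge\|\nabla\psi\|_{\sL^2(\Omega)}^2$ by weak lower semicontinuity of the $\sL^2$-norm. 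Therefore $\mathcal{Q}_{h,\A}(\psi)\le\liminf_n\mathcal{Q}_{h,\A}(\psi_n)=\lambda(\Omega,\A,p,h)$, which together with the definition of the infimum forces $\mathcal{Q}_{h,\A}(\psi)=\lambda(\Omega,\A,p,h)$; thus the infimum is attained at $\psi$.

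The argument is essentially routine; the only mild subtlety is the passage to the limit in the magnetic cross term, which is precisely why one first secures the $\sH^1_0$-bound (to get a weak limit of $\nabla\psi_n$) together with the strong $\sL^2$-convergence coming from Rellich--Kondrachov. An alternative to the termwise computation is to observe that $\psi\mapsto\mathcal{Q}_{h,\A}(\psi)^{1/2}=\|(-ih\nabla+\A)\psi\|_{\sL^2(\Omega)}$ is a seminorm which is continuous on $\sH^1_0(\Omega)$, hence convex and strongly lower semicontinuous, and therefore weakly lower semicontinuous.
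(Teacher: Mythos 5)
Your proof is correct and takes essentially the same route as the paper: pick an $\sL^p$-normalized minimizing sequence, obtain an $\sH^1_0$-bound from the boundedness of $\A$ and $\Omega$, extract a weakly convergent subsequence with strong $\sL^p$ (and $\sL^2$) convergence by Rellich--Kondrachov, and conclude by weak lower semicontinuity of the magnetic energy. The paper leaves the final lower-semicontinuity step as ``this is enough to conclude''; you usefully fill it in, either termwise or via the observation that $\psi\mapsto\|(-ih\nabla+\A)\psi\|_{\sL^2}$ is a convex continuous (hence weakly lower semicontinuous) functional on $\sH^1_0(\Omega)$.
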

\begin{proof}
The proof is standard but we recall it for completeness. Consider a minimizing sequence $(\psi_{j})$ that is normalized in $\sL^p$-norm. 
Then, by a H\"{o}lder inequality and using that $\Omega$ has bounded measure, $(\psi_{j})$ is bounded in $\sL^2$.
Since $\A \in \sL^{\infty}(\Omega)$, we conclude that $(\psi_{j})$ is bounded in $\sH^1_{0}(\Omega)$.
By the Banach-Alaoglu Theorem there exists a subsequence (still denoted by $(\psi_{j})$) and $\psi_{\infty} \in \sH^1_{0}(\Omega)$ such that $\psi_j \rightharpoonup \psi_{\infty}$ weakly in $\sH^1_{0}(\Omega)$ and $\psi_j \rightarrow \psi_{\infty}$ in $\sL^q(\Omega)$ for all $q\in[2,+\infty)$.
This is enough to conclude.
\end{proof}

Let us now consider the (focusing) equation satisfied by the minimizers.

\begin{lemma}
The minimizers (which belong to $\sH^1_{0}(\Omega)$) of the $\sL^p$-normalized version of \eqref{inf} satisfy the following equation in the sense of distributions:
\begin{equation}\label{NLS}
(-ih\nabla+\A)^2\psi=\lambda(\Omega, \A, p, h)|\psi|^{p-2}\psi,\qquad \|\psi\|_{\sL^p(\Omega)}=1.
\end{equation}
In particular (by Sobolev embedding), the minimizers belong to the domain of $\mathcal{L}_{h,\A}$.
\end{lemma}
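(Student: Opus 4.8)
The plan is to obtain \eqref{NLS} by a standard first--variation argument for the constrained minimization problem, and then to read off the membership in $\Dom(\mathcal{L}_{h,\A})$ from the two--dimensional Sobolev embedding. Fix a minimizer $\psi$, so that $\|\psi\|_{\sL^p(\Omega)}=1$ and $\mathcal{Q}_{h,\A}(\psi)=\lambda:=\lambda(\Omega,\A,p,h)$. For an arbitrary $\varphi\in\sH^1_{0}(\Omega)$, set
\[
f(t)=\mathcal{Q}_{h,\A}(\psi+t\varphi)-\lambda\,\|\psi+t\varphi\|_{\sL^p(\Omega)}^{2},\qquad t\in\R .
\]
By the variational characterization \eqref{inf} of $\lambda$ one has $f(t)\ge 0$ for every $t$, while $f(0)=0$; hence $0$ is a global minimum of $f$, and it suffices to prove that $f$ is differentiable at $0$ and that $f'(0)=0$.

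The first term of $f$ is a quadratic polynomial in $t$, with
\[
\frac{\dx}{\dx t}\mathcal{Q}_{h,\A}(\psi+t\varphi)\Big|_{t=0}=2\,\Re\langle(-ih\nabla+\A)\psi,(-ih\nabla+\A)\varphi\rangle_{\sL^2(\Omega)} .
\]
For the $\sL^p$ term I would argue as follows. Since $\Omega$ is bounded and $\psi,\varphi\in\sH^1_{0}(\Omega)$, the two--dimensional Sobolev embedding gives $\psi,\varphi\in\sL^q(\Omega)$ for every $q\in[2,\infty)$; in particular $\psi,\varphi\in\sL^p(\Omega)$. Because $p\ge 2$, the map $\C\ni s\mapsto|s|^p$ is of class $C^1$ with differential $p|s|^{p-2}\Re(\bar s\,\cdot\,)$, and for $|t|\le 1$ the difference quotients obey the pointwise bound
\[
\left|\frac{|\psi+t\varphi|^p-|\psi|^p}{t}\right|\le p\,(|\psi|+|\varphi|)^{p-1}|\varphi| ,
\]
whose right--hand side lies in $\sL^1(\Omega)$ by Hölder's inequality (write it as the product of $(|\psi|+|\varphi|)^{p-1}\in\sL^{p/(p-1)}$ and $|\varphi|\in\sL^{p}$). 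Dominated convergence then yields $\frac{\dx}{\dx t}\|\psi+t\varphi\|_{\sL^p(\Omega)}^{p}\big|_{t=0}=p\,\Re\langle|\psi|^{p-2}\psi,\varphi\rangle_{\sL^2(\Omega)}$, and, composing with $u\mapsto u^{2/p}$ and using $\|\psi\|_{\sL^p(\Omega)}=1$,
\[
\frac{\dx}{\dx t}\|\psi+t\varphi\|_{\sL^p(\Omega)}^{2}\Big|_{t=0}=2\,\Re\langle|\psi|^{p-2}\psi,\varphi\rangle_{\sL^2(\Omega)} .
\]

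Combining these two computations, $f'(0)=0$ becomes $\Re\langle(-ih\nabla+\A)\psi,(-ih\nabla+\A)\varphi\rangle=\lambda\,\Re\langle|\psi|^{p-2}\psi,\varphi\rangle$ for all $\varphi\in\sH^1_0(\Omega)$; replacing $\varphi$ by $i\varphi$ gives the identity for the imaginary parts, and adding the two produces
\[
\langle(-ih\nabla+\A)\psi,(-ih\nabla+\A)\varphi\rangle=\lambda\,\langle|\psi|^{p-2}\psi,\varphi\rangle,\qquad\forall\,\varphi\in\sH^1_{0}(\Omega),
\]
which is precisely the distributional formulation of $(-ih\nabla+\A)^2\psi=\lambda|\psi|^{p-2}\psi$, i.e. \eqref{NLS}. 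For the last assertion, since $\psi\in\sL^q(\Omega)$ for every $q<\infty$ we get $|\psi|^{p-2}\psi\in\sL^2(\Omega)$, whence $(-ih\nabla+\A)^2\psi\in\sL^2(\Omega)$; by the description of $\Dom(\mathcal{L}_{h,\A})$ recalled above, this means $\psi\in\Dom(\mathcal{L}_{h,\A})$.

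The only genuinely technical point is the differentiation under the integral sign for the $\sL^p$ constraint, i.e. exhibiting the $\sL^1$ dominating function for the difference quotients; this is exactly where the hypothesis $p\ge 2$ (so that $s\mapsto|s|^{p-2}s$ is continuous up to $s=0$) and the fact that in dimension two $\sH^1_0(\Omega)$ embeds into every $\sL^q(\Omega)$, $q<\infty$, are used. Everything else is a direct computation.
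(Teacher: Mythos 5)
Your argument is correct. The paper states this lemma without proof (it is regarded as a standard fact), so there is nothing to compare against; your first-variation computation—expanding the perturbed Rayleigh quotient, justifying differentiation of the $\sL^p$-constraint via a pointwise mean-value bound and dominated convergence using the two-dimensional Sobolev embedding, polarizing by replacing $\varphi$ with $i\varphi$, and then reading off $\psi\in\Dom(\mathcal{L}_{h,\A})$ from $|\psi|^{p-2}\psi\in\sL^2(\Omega)$—is exactly the standard Euler--Lagrange derivation the authors are implicitly invoking.
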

This paper is motivated by the seminal paper \cite{EL89} where the minimization problem \eqref{inf} is investigated for $\Omega=\R^d$ and with a constant magnetic field (and also in the case of some nicely varying magnetic fields). In particular, Esteban and Lions prove the existence of minimizers by using the famous concentration-compactness method. In the present paper, we want to describe the minimizers as well as the infimum itself in the semiclassical limit $h\to 0$. The naive idea is that, locally, modulo a blow up argument, they should look like the minimizers in the whole plane. In our paper, we will also allow the magnetic field to vanish and this will lead to other minimization problems in the whole plane which are interesting in themselves and for which the results of \cite{EL89} do not apply.

Another motivation to consider the minimization problem \eqref{inf} comes from the recent paper \cite{diCS14}. Di Cosmo and Van Schaftingen analyze a close version of \eqref{inf} in $\Omega\subset\R^d$ in the presence of an additional electric potential. Note here that, as for the semiclassical analysis in the case $p=2$, if there is a non-zero electric potential, then the minima of $V$ tend to attract the bound states, independently from the presence of the magnetic field \cite{K00}; in \cite{diCS14} the electric potential is multiplied by $h$ and plays on the same scale as $\B$ which cannot be treated as a perturbation. These authors prove, modulo subsequences extractions of the semiclassical parameter, that the asymptotics of the optimal Sobolev constant (with electro-magnetic field) is governed by a family of model minimization problems with constant magnetic fields and electric potentials. They also establish that we can find minimizers of \eqref{inf} which are localized near the minima of the \enquote{concentration function}. This function is nothing but the infimum of the model problem in $\R^d$, depending on the point $\x$ where the blow up occurs. In all their estimates, these authors do not quantify the convergence with an explicit dependence on $h$. 

In our paper we will especially tackle this question in dimension two in the case of a pure magnetic field and we will see how this refinement can be applied to get localization estimates. As in \cite{diCS14}, we will first give upper bounds of $\lambda(\Omega,\A,p,h)$, but with quantitative remainders. This will be the constructive and explicit part of the analysis, relying on the model minimization problems. Note that this is the only part of our analysis where the concentration-compactness method will be used, to analyze model problems with vanishing magnetic fields. Then we will establish lower bounds and localization estimates. For that purpose, inspired by the linear spectral techniques (see for instance \cite[Part I]{FouHel10}), we will provide an alternative point of view to the semiclassical concentration-compactness arguments of \cite{diCS14} by using semiclassical partitions of unity adapted simultaneously to the $\sL^p$-norm and the magnetic quadratic form. Moreover, in the case of non-vanishing magnetic fields, we will establish exponential decay estimates of all the minimizers of \eqref{inf} away from the magnetic wells (the minima of $\B$). In fact, we will use the philosophy of the semiclassical linear methods: the more accurate the estimate of $\lambda(\Omega,\A,p,h)$ obtained, the more refined is the localization of the bound states. A very rough localization estimate of the bound states in $\sL^p$-norm (directly related to the remainders in the estimates of $\lambda(\Omega,\A,p,h)$) will then be enough to get an a priori control of the nonlinearity and the investigation will be reduced to the well-known semiclassical concentration estimates \`a la Agmon (see \cite{Agmon85, HelSj84, Hel88}), jointly with standard elliptic estimates. Finally, note that our investigation deals with the magnetic analog of the pure electric case of \cite{W93} (see also \cite{dPF97}). We could include in our analysis an electric potential, but we refrain to do so to highlight the pure magnetic effects.

\subsection{Results}
We would like to provide an accurate description of the behavior of $\lambda(\Omega,\A,p,h)$ when $h$ goes to zero.
Locally, we can approximate by either a constant magnetic field, or a magnetic field having a zero of a certain order. Therefore, we introduce the following notation. 

\begin{definition}
For $k\in\N$, we define
\begin{equation}\label{lambda0}
\lambda^{[k]}(p )=\lambda(\R^2, \A^{[k]}, p,1)=\inf_{\psi\in \Dom(Q_{\A^{[k]}}), \psi\neq 0}\frac{Q_{\A^{[k]}}(\psi)}{\|\psi\|^2_{\sL^p}},
\end{equation}
where $\A^{[k]}(x,y)=\left(0, \frac{x^{k+1}}{k+1}\right)$. Here 
\begin{align*}
Q_{\A^{[k]}}(\psi) = \int_{\R^2} |(-i\nabla +\A^{[k]})\psi|^2\dx \x,
\end{align*}
with domain
\begin{align*}
\Dom(Q_{\A^{[k]}})=\left\{ \psi \in \sL^2(\R^2)\,:\, (-i\nabla +\A^{[k]})\psi\in \sL^2(\R^2) \right\}.
\end{align*}
\end{definition}
In the case $k=0$ and $p\geq2$, it is known that the infimum is a minimum (see \cite{EL89}). We will prove in this paper that, for $k\geq 1$ and $p>2$, the minimum is also attained, even if the corresponding magnetic field does not satisfy the assumptions of \cite{EL89}.

We can now state our first theorem concerning the case when the magnetic field does not vanish.
\begin{theorem}\label{theo1}
Let $p\geq 2$. Let us assume that $\A$ is smooth on $\overline{\Omega}$, that $\B=\nabla \times \A$ does not vanish on $\overline{\Omega}$ and that its minimum $b_{0}$ is attained in $\Omega$. Then there exist $C>0$ and $h_{0}>0$ such that, for all $h\in(0,h_{0})$,
$$(1-Ch^{\frac{1}{8}})\lambda^{[0]}(p )b^{\frac{2}{p}}_{0}h^2h^{-\frac{2}{p}}\leq\lambda(\Omega,\A, p, h)\leq (1+Ch^{1/2})\lambda^{[0]}(p)b_{0}^{\frac{2}{p}}h^2h^{-\frac{2}{p}}.$$
Moreover, if the magnetic field is only assumed to be smooth and positive on $\overline{\Omega}$ (with a minimum possibly on the boundary), the lower bound is still valid.
\end{theorem}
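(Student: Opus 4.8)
\emph{Strategy and a preliminary $\sL^2$-bound.} I would prove the two inequalities by different means: the upper bound by an explicit trial state placed at a well of $\B$, the lower bound by a semiclassical partition of unity. Only the latter survives when the minimum of $\B$ is degenerate or lies on $\partial\Omega$, because it never uses the \emph{location} of the well, only the pointwise inequality $\B\ge b_0>0$. The key preliminary fact, valid for every $\psi\in\sH^1_0(\Omega)$, is the ``magnetic Poincar\'e'' inequality coming from the commutator $[-ih\partial_1+A_1,-ih\partial_2+A_2]=ih\B$:
$$\mathcal{Q}_{h,\A}(\psi)\ \ge\ h\int_\Omega\B\,|\psi|^2\,\dx\x\ \ge\ h\,b_0\,\|\psi\|_{\sL^2(\Omega)}^2,\qquad\text{hence}\qquad \|\psi\|_{\sL^2(\Omega)}^2\le (hb_0)^{-1}\mathcal{Q}_{h,\A}(\psi).$$
This is the mechanism that turns the error terms of the localization, which are naturally bounded by $\|\psi\|_{\sL^2}^2$, into a small multiple of $\mathcal{Q}_{h,\A}(\psi)$ itself.

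\emph{Lower bound.} Fix $\rho\in(0,\tfrac12)$ and a semiclassical partition of unity $(\chi_j)_j$ subordinate to a locally finite cover of $\overline\Omega$ by balls $B_j$ of radius $h^\rho$, with $\sum_j\chi_j^2\equiv1$ and $\sum_j|\nabla\chi_j|^2\le Ch^{-2\rho}$. The IMS formula gives, for any $\psi\in\sH^1_0(\Omega)$,
$$\mathcal{Q}_{h,\A}(\psi)=\sum_j\mathcal{Q}_{h,\A}(\chi_j\psi)-h^2\!\int_\Omega\Big(\sum_j|\nabla\chi_j|^2\Big)|\psi|^2\,\dx\x\ \ge\ \sum_j\mathcal{Q}_{h,\A}(\chi_j\psi)-Ch^{2-2\rho}\|\psi\|_{\sL^2}^2.$$
On each ball I would freeze the field: in a suitable gauge $\A=\A_j+\mathbf r_j$ on $B_j$ with $\A_j$ affine of constant curl $\B(\x_j)$ and $|\mathbf r_j|\le Ch^{2\rho}$; expanding a square, using $\B(\x_j)\ge b_0$, and invoking the definition \eqref{lambda0} of $\lambda^{[0]}(p)$ after the rescaling $\x\mapsto h^{1/2}\x$ and a dilation normalizing the constant field to $1$ (note that $\chi_j\psi$, extended by $0$, is an admissible competitor for the whole-plane model precisely because $\psi\in\sH^1_0$), one gets
$$\mathcal{Q}_{h,\A}(\chi_j\psi)\ \ge\ (1-\eps)\,h^{2-\frac2p}\,b_0^{\frac2p}\,\lambda^{[0]}(p)\,\|\chi_j\psi\|_{\sL^p}^2\ -\ C\eps^{-1}h^{4\rho}\|\chi_j\psi\|_{\sL^2}^2.$$
The final ingredient is the superadditivity of the $\sL^p$-norm under the partition: since $p\ge2$, Minkowski's inequality in $\sL^{p/2}(|\psi|^p\,\dx\x)$ applied to $1=\sum_j\chi_j^2$ yields $\sum_j\|\chi_j\psi\|_{\sL^p}^2\ge\|\psi\|_{\sL^p}^2$. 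Summing over $j$ (and using $\sum_j\|\chi_j\psi\|_{\sL^2}^2=\|\psi\|_{\sL^2}^2$), then inserting the a priori bound $\|\psi\|_{\sL^2}^2\le(hb_0)^{-1}\mathcal{Q}_{h,\A}(\psi)$, one arrives at
$$\mathcal{Q}_{h,\A}(\psi)\Big(1+Cb_0^{-1}\big(\eps^{-1}h^{4\rho-1}+h^{1-2\rho}\big)\Big)\ \ge\ (1-\eps)\,h^{2-\frac2p}\,b_0^{\frac2p}\,\lambda^{[0]}(p)\,\|\psi\|_{\sL^p}^2.$$
Optimizing the free parameters (one may take $\eps=h^{1/8}$, $\rho=\tfrac38$, which makes every remainder $O(h^{1/8})$) and taking the infimum over $\psi$ yields the lower bound in the announced form. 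The only properties of $\B$ used are its smoothness and $\B\ge b_0>0$ on $\overline\Omega$ — the minimum of $\B$ may be degenerate or attained on $\partial\Omega$.

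\emph{Upper bound.} Let $\x_0\in\Omega$ with $\B(\x_0)=b_0$; being an interior minimum, $\nabla\B(\x_0)=0$, so $\B(\x)=b_0+O(|\x-\x_0|^2)$ near $\x_0$. Let $\Phi$ be a minimizer of the model problem \eqref{lambda0} with $k=0$ and field $1$, which exists by \cite{EL89} and, solving a nonlinear magnetic equation while the constant field confines it, decays faster than any exponential. Rescale $\Phi$ so that its field equals $b_0$, normalize it in $\sL^p$, cut it off at a (large) radius $R$, transplant it at the semiclassical scale around $\x_0$, and insert the gauge factor matching $\A$ to its linearization $\A_0$ at $\x_0$. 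Evaluating $\mathcal{Q}_{h,\A}/\|\cdot\|_{\sL^p}^2$ on this trial state produces the leading term $h^{2-\frac2p}b_0^{\frac2p}\lambda^{[0]}(p)$, with a relative error from truncation (exponentially small in $R$) and from the remainder $\A-\A_0=O(|\x-\x_0|^2)$ over the support of size $Rh^{1/2}$; balancing these two (essentially $R\sim|\log h|^{1/2}$) gives the factor $1+Ch^{1/2}$.

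\emph{The main difficulty.} In the lower bound, once the two structural facts — the commutator a priori bound and the Minkowski superadditivity of the $\sL^p$-norm — have been isolated, the rest is careful bookkeeping of scales. The genuinely delicate part is the upper bound: one must know the decay of the \emph{planar} model minimizer $\Phi$ (whose existence for $k=0$ is imported from \cite{EL89}) precisely enough to truncate it with only an exponentially small loss, and one must choose the gauge near $\x_0$ so that the curvature of $\B$ enters only at the order quoted.
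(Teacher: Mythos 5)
Your proof is correct and, for the lower bound, takes a genuinely cleaner route than the paper. The paper recovers the $\sL^p$-norm from the local pieces by combining concavity of $t\mapsto t^{2/p}$ with a \emph{translated} two-scale partition (Lemmas~\ref{translation} and~\ref{Lp-partition} ensure $\sum_\ell \int |\chi_\ell\psi|^p \ge (1-Ch^{\alpha-\rho})\int|\psi|^p$), which requires two parameters $\alpha>\rho$ and contributes an extra error $h^{\alpha-\rho}$. Your Minkowski observation --- applying the triangle inequality in $\sL^{p/2}(|\psi|^p\dx\x)$ to $1=\sum_j\chi_j^2$ --- gives $\sum_j\|\chi_j\psi\|_{\sL^p}^2\ge\|\psi\|_{\sL^p}^2$ with \emph{no} loss, collapses the two scales to one, and eliminates the translation argument entirely. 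With a single $\rho$, the remaining errors $h^{1-2\rho}$, $\eps$, $\eps^{-1}h^{4\rho-1}$ balance at $\rho=\tfrac38$, $\eps=h^{1/4}$, which would actually improve the lower bound exponent to $h^{1/4}$; your choice $\eps=h^{1/8}$ recovers $h^{1/8}$ and so still gives the stated result. You also use the elementary magnetic uncertainty $\mathcal{Q}_{h,\A}\ge hb_0\|\cdot\|_2^2$ directly rather than the Helffer--Mohamed asymptotics invoked in the paper, which is a simplification; and your remark that this only needs $\B\ge b_0>0$ on $\overline\Omega$ correctly yields the last sentence of the theorem. (The paper does need the translated-grid machinery later, in Proposition~\ref{rough-loc}, so it is not superfluous in the paper as a whole.)

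Two small inaccuracies in the upper bound, neither fatal to the conclusion. First, Proposition~\ref{Agmon} proves only exponential decay $\int e^{2\eta|\x|}(|v|^2+|\nabla_{\A}v|^2)<\infty$ for the planar minimizer, not decay ``faster than any exponential.'' Second, the factor $1+Ch^{1/2}$ does not come from balancing a truncation radius $R\sim|\log h|^{1/2}$ against the Taylor remainder: with the paper's $h$-independent cutoff $\chi$ (radius $\sim h^{-1/2}$ after rescaling) the truncation error is $O(h^\infty)$ and plays no role; the $h^{1/2}$ arises purely from the Cauchy--Schwarz split with parameter $\eps$ between the model energy (order $h^2$) and the quadratic form of the gauge remainder (order $h^3$ since $\int|\y|^4|v|^2<\infty$), optimized at $\eps=h^{1/2}$. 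If one insists on a growing cutoff radius, exponential decay forces $R\sim|\log h|$, not $|\log h|^{1/2}$, but this is an unnecessary complication.
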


\begin{remark}
The error estimate in the upper bound in Theorem~\ref{theo1} matches the corresponding bound in the well-known linear case and we expect it to be optimal. However, the relative error of $h^{\frac{1}{8}}$ in the lower bound is unlikely to be best possible. The same remark applies to the error bounds in Theorem~\ref{theo2}.
\end{remark}

In the following theorem, we state an exponential concentration property of the minimizers.
\begin{theorem}\label{loc}
Let $p>2$, $\rho\in\left(0,\frac{1}{2}\right)$, $\eps>0$ and consider the same assumptions as in Theorem \ref{theo1} and also assume that the minimum is unique and attained at $\x_{0} \in \Omega$. 

Then there exist $C>0$ and $h_{0}>0$ such that, for all $h\in(0,h_{0})$ and all $\psi$ solution of \eqref{NLS}, we have
$$\|\psi\|_{\sL^\infty(\complement D(\x_{0},2\eps))}\leq Ce^{-Ch^{-\rho}}\|\psi\|_{\sL^\infty(\Omega)},$$
where $D(\x,R)$ denotes the open ball of center $\x$ and radius $R>0$.
\end{theorem}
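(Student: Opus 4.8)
The plan is to derive the exponential localization of any solution $\psi$ of \eqref{NLS} from the already-established two-sided bounds on $\lambda(\Omega,\A,p,h)$ in Theorem~\ref{theo1}, following the semiclassical Agmon strategy. First I would record the rough $\sL^p$-localization that comes \emph{for free} from the energy estimates: since $\lambda(\Omega,\A,p,h)$ is within $O(h^{1/8})$ of the model value $\lambda^{[0]}(p)b_0^{2/p}h^{2-2/p}$, any minimizer must essentially concentrate at scale $h^{1/2}$ near the set where $\B$ attains $b_0$; quantitatively, $\int_{\complement D(\x_0,h^\rho)}|\psi|^p\dx\x$ is exponentially (or at least polynomially, which suffices) small. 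This is the standard consequence of a partition of unity argument together with the fact that away from the well the local ground-state energy of the magnetic Laplacian exceeds $b_0 h$ by a definite amount. This step uses only Theorem~\ref{theo1} and linear spectral localization, so the nonlinear term $|\psi|^{p-2}\psi$ plays no role yet.

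Next I would upgrade this to an $\sL^\infty$ statement and then to exponential decay. The key point is that, once we know $\|\psi\|_{\sL^p}=1$ is concentrated at scale $h^{1/2}$, the function $W:=\lambda(\Omega,\A,p,h)|\psi|^{p-2}$ appearing as the ``potential'' in \eqref{NLS}, rewritten as $(-ih\nabla+\A)^2\psi = W\psi$, is a genuine potential that is small (of order $h^{2-2/p}$ times something uniformly bounded) once we have an a priori $\sL^\infty$ bound on $\psi$. So I would first bootstrap from $\sL^p$ to $\sL^\infty$ using elliptic regularity for the magnetic Laplacian (Sobolev embedding applied to \eqref{NLS}, as invoked in the second lemma), obtaining a crude bound $\|\psi\|_{\sL^\infty(\Omega)}\le Ch^{-N}$ for some fixed $N$, and a matching statement that the $\sL^\infty$ mass is concentrated near $\x_0$. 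Then, away from $\x_0$, the term $W\psi$ is a negligible perturbation: on $\complement D(\x_0,\eps)$ the operator $(-ih\nabla+\A)^2 - W$ is bounded below by $(b_0+c)h$ for some $c>0$ (because $\B>b_0$ there and $W=o(h)$), which is exactly the coercivity needed to run the Agmon estimate.

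The Agmon argument itself is then routine: multiply \eqref{NLS} by $e^{2\Phi/h^\rho}\bar\psi$ with $\Phi$ a suitable Lipschitz weight supported in $\complement D(\x_0,\eps)$ with $|\nabla\Phi|$ bounded, integrate by parts, use the IMS-type localization formula to absorb the commutator term $\frac{1}{h^{2-2\rho}}|\nabla\Phi|^2$, and use the coercivity above together with the already-known smallness of $\psi$ in a collar $D(\x_0,2\eps)\setminus D(\x_0,\eps)$ to control the boundary/source terms. This yields $\|e^{\Phi/h^\rho}(-ih\nabla+\A)\psi\|_{\sL^2} + h^{-\rho}\|e^{\Phi/h^\rho}\psi\|_{\sL^2} \le Ce^{-Ch^{-\rho}}\|\psi\|_{\sL^\infty(\Omega)}$, i.e. exponential decay of $\psi$ in weighted $\sL^2$-norm on $\complement D(\x_0,2\eps)$. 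A final elliptic bootstrap (Sobolev embedding / interior estimates for $(-ih\nabla+\A)^2 - W$ on balls away from $\x_0$, where $W$ is now exponentially small in $\sL^\infty$ by the same weight) converts the weighted $\sL^2$ decay into the claimed $\sL^\infty$ decay, with the constant $C$ absorbing the powers of $h$ lost in the elliptic estimates since $e^{-Ch^{-\rho}}$ beats any polynomial.

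The main obstacle I anticipate is the \emph{interplay between the nonlinearity and the localization}: the Agmon estimate needs an a priori $\sL^\infty$ (or at least $\sL^p$) bound on $\psi$ to treat $\lambda|\psi|^{p-2}\psi$ as a subordinate term, but the sharp $\sL^\infty$ bound on $\psi$ near $\x_0$ itself seems to require knowing the localization. The way around this is precisely the ``bootstrap on the quality of the estimate'' philosophy emphasized in the introduction: one starts from the crude polynomial $\sL^p$-localization implied by Theorem~\ref{theo1}, uses elliptic regularity to get a crude polynomial $\sL^\infty$ bound everywhere, which is already enough to make the nonlinear term $o(h)$ on $\complement D(\x_0,\eps)$ uniformly; the Agmon estimate then feeds back an exponentially small bound, closing the loop. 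Making the hierarchy of scales ($\eps$, $2\eps$, $h^\rho$, $h^{1/2}$) and the constant $c$ in the lower bound $\B\ge b_0+c$ on $\complement D(\x_0,\eps)$ consistent, and checking that the uniqueness of the minimum of $\B$ is genuinely used (to ensure the single well at $\x_0$), is where the care is needed; the calculations are otherwise standard.
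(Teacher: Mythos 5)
Your overall roadmap matches the paper's: a rough polynomial $\sL^p$-localization from the two-sided bound on $\lambda(\Omega,\A,p,h)$ (this is the paper's Proposition~\ref{rough-loc}), then a control of the nonlinear ``potential'' $V_h=-\lambda(\Omega,\A,p,h)|\psi|^{p-2}$ on the exterior region, then a weighted/Agmon estimate with weight $e^{\chi h^{-\rho}|\x|}$ absorbed by IMS, and finally an elliptic bootstrap $\sL^2\to\sH^1\to\sH^2\to\sL^\infty$.

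Where you diverge from the paper, and where your plan has a real gap, is in \emph{how} you subordinate the nonlinear term before running the Agmon estimate. You propose to make $W=\lambda|\psi|^{p-2}$ \emph{pointwise} $o(h)$ on $\complement D(\x_0,\eps)$, starting from a crude global bound $\|\psi\|_{\sL^\infty(\Omega)}\le Ch^{-N}$. But a global $\sL^\infty$ bound of this size only gives $W\le Ch^{2-2/p}h^{-N(p-2)}$, which for the $N$ coming out of elliptic regularity is not $o(h)$. To get $W=o(h)$ on the exterior you would need the \emph{local} pointwise bound $|\psi|=o(h^{-1/p})$ on $\complement D(\x_0,\eps)$, which is exactly the kind of $\sL^\infty$-localization you are trying to \emph{prove}; deriving it from the $\sL^p$-smallness via local elliptic estimates for the $h$-dependent operator $(-ih\nabla+\A)^2-V_h$ is feasible but delicate (the constants degenerate in $h$) and is not spelled out in your plan. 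The paper sidesteps this circularity entirely by never asking for pointwise smallness of $W$: it establishes a \emph{form bound} $\lambda\int|\psi|^{p-2}|v|^2\dx\x \le C\bigl(h\delta^{1-p/2}h^{\gamma(p-2)/2}\|v\|^2_{\sL^2}+\delta\,\mathcal{Q}_{h,\A}(v)\bigr)$ for test functions $v$ supported in $\complement D(\x_0,2\eps)$, obtained by H\"older ($\int|\psi|^{p-2}|v|^2\le\|v\|^2_{\sL^p}\|\psi\|^{p-2}_{\sL^p(\supp v)}$) followed by the rescaled Sobolev inequality and the diamagnetic inequality applied to $v$. This converts the $\sL^p$-smallness of $\psi$ on the exterior directly into operator-theoretic subordination of $V_h$ relative to $\mathcal{Q}_{h,\A}$, with no pointwise upgrade needed before Agmon; the $\sL^\infty$ conversion happens only at the very end. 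I would recommend you replace the pointwise-smallness step by this quadratic-form estimate.

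Two minor remarks: (i) your displayed Agmon conclusion $\|e^{\Phi/h^\rho}\psi\|_{\sL^2}\le Ce^{-Ch^{-\rho}}\|\psi\|_{\sL^\infty}$ confuses cause and effect: the IMS/Agmon argument yields a weighted norm that is only \emph{polynomially} large in $h^{-1}$, and the exponential smallness of $\psi$ on $\complement D(\x_0,2\eps)$ then follows by dividing out the weight $e^{\chi h^{-\rho}|\x|}\gtrsim e^{c h^{-\rho}}$ there; (ii) the final $\sL^\infty$ step should go through $\sH^2$ (Sobolev embedding in dimension two) using the equation, rather than interior estimates for the Schr\"odinger operator with the ``small'' potential $W$ — the latter again requires the pointwise smallness of $W$ that is not yet available.
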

\begin{remark}
In Theorem \ref{loc}, if the minimum of the magnetic field is non-degenerate, we can replace $\eps$ by $h^{\gamma}$ with $\gamma>0$ sufficiently small. In Theorem \ref{loc}, we have the same kind of results in the case of multiple wells. Theorems \ref{theo1} and \ref{loc} are quantitative improvements of \cite[Theorem 1.1]{diCS14} in the pure magnetic case. We can notice that, when $p>2$, we have
\begin{equation}\label{NLS-r}
(-ih\nabla+\A)^2\varphi=|\varphi|^{p-2}\varphi,
\end{equation}
with $\varphi=\lambda(\Omega,\A,p,h)^{\frac{1}{p-2}}\psi$. Thus, we have constructed solutions of \eqref{NLS-r} which decay exponentially away from the magnetic wells in the semiclassical limit. 
\end{remark}
The following theorem analyzes the case when the magnetic field vanishes along a smooth curve.
\begin{theorem}\label{theo2}
Let $p>2$. Let us assume that $\A$ is smooth on $\overline{\Omega}$,
that 
$$
\Gamma = \{ \x \in \overline{\Omega}\,:\,  \B(\x) = 0 \},
$$
satisfies that $\Gamma \subset \Omega$ is a smooth, simple and closed curve, and that $\B$ vanishes non-degenerately along $\Gamma$ in the sense that
$$
\nabla \B(\x) \neq 0, \qquad \text{ for all } \x \in \Gamma.
$$
Assuming that $\B$ is positive inside $\Gamma$ and negative outside, we denote by $\gamma_{0}>0$ the minimum of the normal derivative of $\B$ with respect to $\Gamma$. Then there exist $C>0$ and $h_{0}>0$ such that, for all $h\in(0,h_{0})$,
$$(1-Ch^{\frac{1}{33}})\lambda^{[1]}(p )\gamma_{0}^{\frac{4}{3p}}h^2 h^{-\frac{4}{3p}}\leq\lambda(\Omega,\A, p, h)\leq(1+Ch^{\frac{1}{3}})\lambda^{[1]}(p )\gamma_{0}^{\frac{4}{3p}}h^2 h^{-\frac{4}{3p}}.$$
\end{theorem}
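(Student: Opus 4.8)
The overall strategy mirrors that of Theorem~\ref{theo1}, but the relevant local model is now the degenerate one $\lambda^{[1]}(p)$ attached to a magnetic field vanishing linearly along a curve. The plan is to separately establish the upper and the lower bound, with the scaling dictated by a dimensional analysis: near $\Gamma$ the field behaves like $\B(\x) \approx \gamma(s)\, t$ where $(s,t)$ are tubular coordinates ($t$ the signed distance to $\Gamma$), so after the magnetic rescaling the natural length scale in the $t$-direction is $h^{1/3}$ (rather than $h^{1/2}$ as in the non-vanishing case), while along $\Gamma$ the relevant scale stays of order one. This is exactly what produces the exponent $h^{2}h^{-4/(3p)}$: one power $h^{2}$ from the quadratic form, and the factor $h^{-4/(3p)}$ from the mismatch between the $\sL^p$-normalization and the anisotropic rescaling (the $\sL^p$-mass lives on a set of width $h^{1/3}$).

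\emph{Upper bound.} For the upper bound I would use a quasimode built from a minimizer (or near-minimizer) $\Phi$ of the model problem \eqref{lambda0} with $k=1$. Pick a point $\x_0 \in \Gamma$ realizing $\gamma_0 = \min_\Gamma \partial_\n \B$. Working in tubular coordinates around $\x_0$, freeze the normal derivative of $\B$ at its value $\gamma_0$, choose a gauge in which the frozen vector potential is exactly $\A^{[1]}$ up to the scaling factor $\gamma_0^{1/3}$, and insert the anisotropically rescaled profile $\psi(\x) = \chi(\x)\, \Phi\big( \gamma_0^{1/3} h^{-1/3} t,\ \gamma_0^{1/3} h^{-2/3} s \big)$ (up to getting the precise powers right by matching the two terms in $Q_{\A^{[1]}}$) times a cutoff $\chi$ localizing to a neighborhood of $\x_0$ of size a small power of $h$. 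Evaluating $\mathcal{Q}_{h,\A}(\psi)$ and $\|\psi\|_{\sL^p}$, the main term reproduces $\lambda^{[1]}(p)\gamma_0^{4/(3p)} h^{2} h^{-4/(3p)}$; the errors come from (i) the Taylor remainder in expanding $\B$ and $\A$ around $\Gamma$ and around $\x_0$ along $\Gamma$, (ii) the metric distortion in tubular coordinates, and (iii) the cutoff. Optimizing the cutoff scale against these Taylor errors is what yields the relative error $Ch^{1/3}$. One must also invoke the (not yet proven at this point, but announced) existence of a genuine minimizer for $\lambda^{[1]}(p)$ when $p>2$, or alternatively work with an almost-minimizer and an extra $\eps$, then let $\eps\to0$.

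\emph{Lower bound.} For the lower bound I would follow the partition-of-unity philosophy advertised in the introduction. Introduce a partition of unity $\sum_j \chi_{j,h}^2 = 1$ with cells adapted to the two scales: cells of size $\sim h^{1/3}$ transverse to $\Gamma$ and $\sim h^{2/3}$ (or some suitable power) along it near $\Gamma$, and larger cells away from $\Gamma$ where $\B$ is bounded below and the field behaves as in Theorem~\ref{theo1} (giving a much larger, hence negligible, contribution). The IMS-type localization formula gives $\mathcal{Q}_{h,\A}(\psi) \ge \sum_j \mathcal{Q}_{h,\A}(\chi_{j,h}\psi) - \sum_j \|\,|\nabla \chi_{j,h}|\,\chi_{j,h}\psi\|^2$, and the gradient-of-cutoff error is $O(h^{2-2/3}) = O(h^{4/3})$ on the worst cells, which is $o(h^{2}h^{-4/(3p)})$ relative to the main term. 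On each near-$\Gamma$ cell, freeze the linear model, rescale anisotropically, compare the frozen quadratic form to the model form $Q_{\A^{[1]}}$ at scale $\gamma_0$ (using $\partial_\n\B \ge \gamma_0$ and controlling the frozen-vs-true discrepancy by the Taylor remainders), and bound below by $\lambda^{[1]}(p)\gamma_0^{4/(3p)} h^{2} h^{-4/(3p)} \|\chi_{j,h}\psi\|_{\sL^p}^2$. Summing over $j$ and using the super-additivity inequality $\big(\sum_j \|\chi_{j,h}\psi\|_{\sL^p}^p\big)^{2/p} \le \sum_j \|\chi_{j,h}\psi\|_{\sL^p}^{2}$ together with $\sum_j \|\chi_{j,h}\psi\|_{\sL^p}^p \ge (\text{const})\|\psi\|_{\sL^p}^p$ (this is where the partition must be simultaneously adapted to the $\sL^p$-norm) closes the estimate. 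The exponent $h^{1/33}$, rather than something cleaner, will come from optimizing the several competing error scales (cutoff gradient error $h^{4/3}$, Taylor error in $\B$ of order the cell size to a power, the loss in the $\sL^p$-superadditivity, and the a priori rough localization of $\psi$).

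\emph{Main obstacle.} The delicate point is the same as in Theorem~\ref{theo1} but worse here: controlling the nonlinear $\sL^p$-term uniformly, i.e. showing the $\sL^p$-mass of $\psi$ does not leak out of the tube around $\Gamma$ at a rate destroying the main term. This requires a bootstrap — a crude a priori localization of minimizers in $\sL^p$-norm (obtained from the comparison of the rough upper and lower bounds), fed back into the partition-of-unity argument to upgrade the constant. Getting the anisotropic scales and the error exponents to be mutually consistent, and in particular verifying that the frozen model on each cell genuinely dominates (so that the \emph{minimum} $\gamma_0$ of the normal derivative, not some average, appears), is the part that will demand the most care.
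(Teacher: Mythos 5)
Your overall strategy is the right one — a rescaled model profile in tubular coordinates for the upper bound, and a partition of unity with frozen local models for the lower bound — but there are two genuine errors that prevent the argument from closing.

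First, the scaling near $\Gamma$ is \emph{isotropic}. After a gauge making the leading vector potential $(0, -\gamma_0 t^2/2)$, the local model $h^2 D_t^2 + (hD_s - \gamma_0 t^2/2)^2$ rescales to the reference operator $D_\tau^2 + (D_\sigma - \tau^2/2)^2$ precisely under $s = h^{1/3}\gamma_0^{-1/3}\sigma$, $t = h^{1/3}\gamma_0^{-1/3}\tau$, i.e.\ \emph{both} variables at scale $h^{1/3}$. Your proposed $s\sim h^{2/3}$ does not balance $hD_s$ against $t^2/2$, and the minimizer of $\lambda^{[1]}(p)$ (for $p>2$) has exponential decay in both variables by Proposition~\ref{Agmon-k}, so there is no slowly varying direction to exploit. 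Correspondingly, the lower bound uses an \emph{isotropic} partition of size $h^\rho$ with $\rho = 9/33 < 1/3$, not the anisotropic $h^{1/3}\times h^{2/3}$ cells you propose; the latter would incur a cutoff loss of order $h^{2}\cdot h^{-4/3} = h^{2/3}$ from the thin direction, which is unaffordable.

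Second, the handling of the IMS remainder is wrong. You claim the $O(h^{4/3})$ cutoff error is $o(h^2 h^{-4/(3p)})$, but for $p>2$ one has $2-\tfrac{4}{3p} > \tfrac{4}{3}$, so $h^{2-4/(3p)} = o(h^{4/3})$: with this naive absolute comparison the IMS error actually \emph{dominates} the main term. The fix is to convert the absolute loss $Dh^{2-2\alpha}\|\chi\psi\|_{\sL^2}^2$ into a \emph{relative} one by invoking the linear asymptotics $\lambda(\Omega,\A,2,h)\sim \gamma_0^{2/3}\lambda^{[1]}(2) h^{4/3}$: then $\|\chi\psi\|_{\sL^2}^2 \leq C h^{-4/3}\mathcal{Q}_{h,\A}(\chi\psi)$, and the loss becomes $Ch^{2/3-2\alpha}\mathcal{Q}_{h,\A}(\chi\psi)$, a small fraction precisely when $\alpha < 1/3$. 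Your sketch never uses the linear eigenvalue bound, and without it the argument does not close.

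Two further remarks. The bootstrap/a-priori $\sL^p$-localization you invoke is not needed for the lower bound of Theorem~\ref{theo2}; the partition-of-unity estimate applies directly to an arbitrary $\psi\in\Dom(\mathcal{Q}_{h,\A})$ (the bootstrap is used only for the Agmon estimates of Theorem~\ref{loc}). And on each near-$\Gamma$ cell one needs the model constant for a frozen vector potential with generic coefficients — Lemma~\ref{min-param} — together with the Taylor estimates $c_{1,\ell} = O(h^{\tilde\rho})$ and $c_{2,\ell} = \partial_{\n,\Gamma}\B(\x_\ell) + O(h^{\tilde\rho})$, which is precisely how the minimum $\gamma_0$ (rather than a cell-dependent quantity) appears in the final constant.
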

\begin{remark}
The case $p=2$ is treated in \cite{DomRay13} (see also \cite{Montgomery95, HelMo96}). In \cite{diCS14}, it is only stated that $h^{-2+\frac{2}{p}}\lambda(\Omega,\A,p,h)$ goes to zero when $h$ goes to zero. Moreover, by using the strategy of the proof of Theorem \ref{loc}, one can establish an exponential decay of the ground states away from $\Gamma$.
\end{remark}
\subsection{Organization of the paper}
In Section \ref{Models}, we investigate the existence of minimizers of \eqref{lambda0} and their decay properties. In Section \ref{Upper}, we provide the upper bounds stated in Theorems \ref{theo1} and \ref{theo2}. Section \ref{Lower} is devoted to the analysis of the corresponding lower bounds and to the proof of Theorem \ref{loc}.

\section{Minimizers of the models and exponential decay}\label{Models}

\subsection{Existence of bound states and exponential decay}
We first recall the diamagnetic inequality and the so-called \enquote{IMS} formula (see \cite{CFKS87, FouHel10}).
\begin{lemma}\label{lem:diamagn}
We have, for $u\in\Dom(Q_{\A})$,
$$|\nabla |u||\leq |(-i\nabla+\A)u|,\quad \mbox{a.e.}$$
which implies that
$$\|\nabla |u|\|^2_{\sL^2(\R^2)}\leq Q_{\A}(u),$$
\end{lemma}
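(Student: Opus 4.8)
The $\sL^2$ bound follows from the pointwise a.e.\ inequality by squaring and integrating over $\R^2$, so it suffices to establish $|\nabla|u||\leq|(-i\nabla+\A)u|$ almost everywhere. First I would note that $u\in\sH^1_{\loc}(\R^2)$: indeed $u\in\sL^2(\R^2)$, and since $\A$ is smooth (hence in $\sL^\infty_{\loc}$) and $(-i\nabla+\A)u\in\sL^2(\R^2)$, one has $\nabla u = i\big((-i\nabla+\A)u-\A u\big)\in\sL^2_{\loc}(\R^2)$.

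Next, for $\eps>0$ set $|u|_\eps=\sqrt{|u|^2+\eps^2}\geq\eps$. The map $z\mapsto\sqrt{|z|^2+\eps^2}$ is smooth with bounded gradient, so the chain rule for Sobolev functions gives $|u|_\eps\in\sH^1_{\loc}(\R^2)$ with
\begin{equation*}
\nabla|u|_\eps=\frac{\Re(\bar u\,\nabla u)}{|u|_\eps}\qquad\text{a.e.}
\end{equation*}
Since $\A$ is real-valued, $\Re\big(\bar u\cdot i\A u\big)=\Re\big(i\A|u|^2\big)=0$, whence, using $(\nabla+i\A)u=i(-i\nabla+\A)u$,
\begin{equation*}
\Re(\bar u\,\nabla u)=\Re\big(\bar u(\nabla+i\A)u\big)=\Re\big(i\,\bar u\,(-i\nabla+\A)u\big).
\end{equation*}
The pointwise Cauchy--Schwarz inequality then yields
\begin{equation*}
\big|\nabla|u|_\eps\big|\leq\frac{|u|}{|u|_\eps}\,\big|(-i\nabla+\A)u\big|\leq\big|(-i\nabla+\A)u\big|\qquad\text{a.e.}
\end{equation*}
In particular $\nabla|u|_\eps$ is bounded in $\sL^2_{\loc}(\R^2)$ uniformly in $\eps$, and since $|u|_\eps\to|u|$ in $\sL^2_{\loc}(\R^2)$ as $\eps\to0$, we deduce $|u|\in\sH^1_{\loc}(\R^2)$.

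Finally I would pass to the limit $\eps\to0$ in the last display. On $\{u\neq0\}$ one has $\nabla|u|_\eps\to\Re(\bar u\,\nabla u)/|u|=\nabla|u|$ almost everywhere, while on $\{u=0\}$ one has $\bar u=0$ and also $\nabla|u|=0$ a.e.\ (the standard fact that the weak gradient of a nonnegative $\sH^1_{\loc}$ function vanishes a.e.\ on its zero set), so that $\nabla|u|_\eps\to\nabla|u|$ almost everywhere on $\R^2$. Combining this with the uniform pointwise bound above gives $|\nabla|u||\leq|(-i\nabla+\A)u|$ a.e., and integrating its square proves the lemma. The only points requiring care are the local Sobolev regularity (immediate here, as $\A$ is smooth) and the identification of $\nabla|u|$ on the zero set of $u$ together with the a.e.\ convergence of $\nabla|u|_\eps$; both are classical properties of Sobolev functions, so no serious obstacle arises --- the substance of the argument is the cancellation $\Re(i\A|u|^2)=0$ coming from $\A$ being real.
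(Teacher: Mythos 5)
The paper does not prove this lemma: it is recalled as a classical fact with a pointer to [CFKS87, FouHel10], so there is no in-paper proof to compare against. Your argument is correct and is precisely the standard Kato regularization proof found in those references: from $u\in\sL^2$, $(-i\nabla+\A)u\in\sL^2$ and $\A\in\sL^\infty_{\loc}$ you get $u\in\sH^1_{\loc}$; you regularize with $|u|_\eps=\sqrt{|u|^2+\eps^2}$, exploit the cancellation $\Re(i\A|u|^2)=0$ coming from $\A$ being real to replace $\nabla u$ by $(\nabla+i\A)u=i(-i\nabla+\A)u$ inside $\Re(\bar u\,\cdot\,)$, apply pointwise Cauchy--Schwarz, and let $\eps\to 0$. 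Two small points worth making explicit if you write this out in full: the uniform bound $0\le|u|_\eps-|u|\le\eps$ shows $|u|_\eps\to|u|$ uniformly (hence in $\sL^2_{\loc}$), which together with the uniform $\sL^2_{\loc}$ bound on $\nabla|u|_\eps$ gives $|u|\in\sH^1_{\loc}$ by weak compactness and uniqueness of distributional limits; and the identification $\nabla|u|=\Re(\bar u\,\nabla u)/|u|$ on $\{u\neq 0\}$ follows by applying the scalar chain rule to $|u|_\eps=\sqrt{|u|^2+\eps^2}$ once $|u|\in\sH^1_{\loc}$ is known and comparing with your complex chain-rule formula. With those remarks the passage to the a.e.\ limit, including the appeal to the vanishing of $\nabla|u|$ a.e.\ on $\{|u|=0\}$, is airtight, and the $\sL^2$ estimate follows by integrating the square. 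No gaps.
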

\begin{lemma}\label{IMS}
If $\chi$ is a Lipschitzian function and $u\in\Dom(\mathcal{L}_{h,\A})$, then we have
\begin{align}\label{eq:IMS}
\Re\langle\mathcal{L}_{h,\A}u,\chi^2 u\rangle_{\sL^2(\Omega)}=\mathcal{Q}_{h,\A}(\chi u)-h^2\|\nabla\chi\, u\|^2_{\sL^2(\Omega)}
\end{align}
\end{lemma}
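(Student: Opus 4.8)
The plan is to deduce the identity \eqref{eq:IMS} from the Leibniz rule for the magnetic gradient $(-ih\nabla+\A)$, applied on both sides of the equation, combined with the fact that $\mathcal{L}_{h,\A}$ is the Friedrichs extension associated with $\mathcal{Q}_{h,\A}$. No genuinely new idea is involved; the whole content is a careful algebraic manipulation, the only mildly technical point being that $\chi$ is merely Lipschitz.

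First I would check that both sides make sense and that $\chi^2 u$ is an admissible test vector. Since $\Omega$ is bounded and $\chi$ is Lipschitzian, $\chi$ is bounded with $\nabla\chi\in\sL^\infty(\Omega)$, so for $u\in\Dom(\mathcal{L}_{h,\A})\subset\sH^1_{0}(\Omega)$ both $\chi u$ and $\chi^2 u$ belong to $\sH^1_{0}(\Omega)=\Dom(\mathcal{Q}_{h,\A})$ (approximate $u$ in $\sH^1_0(\Omega)$ by functions of $C_c^\infty(\Omega)$ and pass to the limit, using that multiplication by $\chi$, $\chi^2$ and $\nabla\chi$ is bounded on $\sL^2(\Omega)$). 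By definition of the Friedrichs extension (first representation theorem), for such a test vector
$$\langle\mathcal{L}_{h,\A}u,\chi^2 u\rangle_{\sL^2(\Omega)}=\int_{\Omega}(-ih\nabla+\A)u\cdot\overline{(-ih\nabla+\A)(\chi^2 u)}\dx\x.$$

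Next I would insert the two Leibniz identities, valid almost everywhere because $\chi,\chi^2\in W^{1,\infty}(\Omega)$ and $\chi$ is real:
\begin{align*}
(-ih\nabla+\A)(\chi^2 u)&=\chi^2(-ih\nabla+\A)u-2ih\,\chi(\nabla\chi)u,\\
(-ih\nabla+\A)(\chi u)&=\chi(-ih\nabla+\A)u-ih(\nabla\chi)u.
\end{align*}
Using the first one in the identity above and taking real parts yields
$$\Re\langle\mathcal{L}_{h,\A}u,\chi^2 u\rangle_{\sL^2(\Omega)}=\|\chi(-ih\nabla+\A)u\|^2_{\sL^2(\Omega)}-2h\,\mathsf{Im}\,\langle(-ih\nabla+\A)u,\chi(\nabla\chi)u\rangle_{\sL^2(\Omega)},$$
where the cross term involving $\A$ disappears upon taking real parts and using that $\chi$ is real. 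On the other hand, expanding $\mathcal{Q}_{h,\A}(\chi u)=\|(-ih\nabla+\A)(\chi u)\|^2_{\sL^2(\Omega)}$ with the second Leibniz identity produces $\|\chi(-ih\nabla+\A)u\|^2_{\sL^2(\Omega)}+h^2\|(\nabla\chi)u\|^2_{\sL^2(\Omega)}$ plus exactly the same cross term $-2h\,\mathsf{Im}\,\langle(-ih\nabla+\A)u,\chi(\nabla\chi)u\rangle_{\sL^2(\Omega)}$ (again because $\chi$ is real). Subtracting $h^2\|\nabla\chi\, u\|^2_{\sL^2(\Omega)}$ then identifies the two expressions, which is precisely \eqref{eq:IMS}.

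The one step I would be most careful about is the justification of the Leibniz identities and of the integration by parts giving the displayed equation when $\chi$ is only Lipschitz. I would handle this by density: the formulas are classical for $u\in C_c^\infty(\Omega)$, and one passes to a general $u\in\Dom(\mathcal{L}_{h,\A})$ using the continuity of $u\mapsto(-ih\nabla+\A)u$ from $\sH^1_{0}(\Omega)$ to $\sL^2(\Omega)$ together with the boundedness of multiplication by $\chi$, $\chi^2$, $\nabla\chi$ on $\sL^2(\Omega)$; alternatively one mollifies $\chi$ itself and lets the mollification parameter tend to zero. Everything else is bookkeeping; this is the standard IMS localization formula transplanted to the magnetic Dirichlet Laplacian, cf.\ \cite{CFKS87, FouHel10}.
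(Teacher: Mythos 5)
The paper does not give a proof of this lemma; it simply cites the IMS localization formula to \cite{CFKS87, FouHel10} as a standard fact. Your argument is the standard proof found in those references, and the algebra checks out: the first representation theorem for the Friedrichs extension legitimizes the pairing $\langle\mathcal{L}_{h,\A}u,\chi^2 u\rangle=\langle(-ih\nabla+\A)u,(-ih\nabla+\A)(\chi^2 u)\rangle$ for $\chi^2 u\in\sH^1_0(\Omega)$, the two Leibniz identities produce the same cross term $-2h\,\mathsf{Im}\langle(-ih\nabla+\A)u,\chi(\nabla\chi)u\rangle$ on both sides (after taking the real part on the left), and subtraction gives \eqref{eq:IMS}. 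The only phrasing I would tighten is "the cross term involving $\A$ disappears": nothing disappears; rather, taking the real part converts $2ih\langle\cdot,\cdot\rangle$ into $-2h\,\mathsf{Im}\langle\cdot,\cdot\rangle$, and this is then matched verbatim against the expansion of $\mathcal{Q}_{h,\A}(\chi u)$. The density/mollification discussion for Lipschitz $\chi$ is appropriate and correctly notes that on the bounded domain $\Omega$ multiplication by $\chi$, $\chi^2$, $\nabla\chi$ is bounded on $\sL^2$ and preserves $\sH^1_0$.
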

Finally, we recall the following useful lower bound (see for instance \cite{AHS}),
\begin{align}\label{eq:MagnUncertain}
\mathcal{Q}_{h,\A}(u) \geq h \int_{\Omega} \B |u|^2 \dx \x
\end{align}
for all $u \in \sH^1_{0}(\Omega)$.

We recall that we can define the Friedrichs extension of the electro-magnetic Laplacian as soon as the electric potential belongs to some $\sL^q$ space.
\begin{proposition}\label{extension}
Let us consider $\A\in\mathcal{C}^\infty(\R^2)$ and $\displaystyle{V\in \sL^q(\R^2)}$, for some $q>1$.
For all $u\in\Dom(Q_{\A})$, we may define
$$Q_{\A,V}(u)=\int_{\R^2} |(-i\nabla+\A)u|^2\dx \x+\int_{\R^2} V|u|^2\dx \x.$$
Then, for all $\eps>0$, there exists  $C>0$ such that
\begin{equation}\label{eq1}
\forall u\in\Dom(Q_{\A}),\qquad Q_{\A,V}(u)\geq (1-\eps)Q_{\A}(u)-C\|u\|^2_{\sL^2(\R^2)},
\end{equation}
Furthermore, for all $\eps>0$ there exists $R>0$ such that $\forall u\in\Dom(Q_{\A})$ with $\supp u\subset \complement D(0,R)$,
\begin{equation}\label{eq2}
 Q_{\A,V}(u)\geq (1-\eps)Q_{\A}(u)-\eps\|u\|^2_{\sL^2(\R^2)}.
\end{equation}
Moreover, we may define the self-adjoint operator---the Friedrichs extension---$\mathcal{L}_{\A,V}$ of $Q_{\A,V}$ whose domain is
$$\Dom(\mathcal{L}_{\A,V})=\left\{u\in\Dom(Q_{\A}) : \left((-i\nabla+\A)^2+V\right)u\in\sL^2(\R^2) \right\},$$ and
$$
\mathcal{L}_{\A,V} u = \left((-i\nabla+\A)^2+V\right)u,
$$
for all $u \in \Dom(\mathcal{L}_{\A,V})$.
\end{proposition}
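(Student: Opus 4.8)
The plan is to treat the electric potential $V\in\sL^q(\R^2)$ as a relatively form-bounded perturbation of the magnetic form $Q_{\A}$ with relative bound zero, and then invoke the KLMN theorem to produce the Friedrichs extension. The only analytic input needed is a magnetic Sobolev inequality: by the diamagnetic inequality (Lemma~\ref{lem:diamagn}), $\|\nabla|u|\|_{\sL^2}^2\leq Q_{\A}(u)$, so for any $u\in\Dom(Q_{\A})$ the function $|u|$ lies in $\sH^1(\R^2)$, and the ordinary Gagliardo--Nirenberg--Sobolev embedding gives $\||u|\|_{\sL^{q'}}\leq C(\|\nabla|u|\|_{\sL^2}+\||u|\|_{\sL^2})$ for every exponent $q'<\infty$ in two dimensions (here $q'$ is the conjugate of $q$; note $q>1$ forces $q'<\infty$, which is exactly why the two-dimensional borderline $q=1$ is excluded). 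Hence $u\in\sL^{2q'}(\R^2)$.

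First I would estimate, by H\"older with exponents $q$ and $q'$,
\begin{equation*}
\left|\int_{\R^2}V|u|^2\dx\x\right|\leq \|V\|_{\sL^q}\,\|u\|_{\sL^{2q'}}^2 \leq C\|V\|_{\sL^q}\bigl(\|\nabla|u|\|_{\sL^2}^2+\|u\|_{\sL^2}^2\bigr)\leq C\|V\|_{\sL^q}\bigl(Q_{\A}(u)+\|u\|_{\sL^2}^2\bigr).
\end{equation*}
This already shows $Q_{\A,V}$ is well defined on $\Dom(Q_{\A})$, but to get the relative bound $\eps$ I would split $V=V\mathds{1}_{\{|V|\leq M\}}+V\mathds{1}_{\{|V|>M\}}=:V_b+V_r$. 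The bounded part contributes at most $M\|u\|_{\sL^2}^2$, and by dominated convergence $\|V_r\|_{\sL^q}\to 0$ as $M\to\infty$; choosing $M$ large enough that $C\|V_r\|_{\sL^q}\leq\eps$ and absorbing the constant $C\|V_r\|_{\sL^q}\|u\|_{\sL^2}^2$ and $M\|u\|_{\sL^2}^2$ into the $\|u\|_{\sL^2}^2$ term yields \eqref{eq1}. For \eqref{eq2}, I would instead use tightness of $|V|^q$: for all $\eps>0$ there is $R>0$ with $\|V\mathds{1}_{\complement D(0,R)}\|_{\sL^q}$ as small as desired, so for $u$ supported in $\complement D(0,R)$ the same H\"older/Sobolev estimate gives $|\int V|u|^2|\leq \eps\,Q_{\A}(u)+\eps\|u\|_{\sL^2}^2$ after a harmless rescaling of $\eps$; this is exactly \eqref{eq2}. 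Finally, \eqref{eq1} shows $Q_{\A,V}$ is semibounded and closed on $\Dom(Q_{\A})$ (a form-bounded perturbation with relative bound $<1$ of a closed form is closed on the same domain), so the representation theorem produces a unique self-adjoint operator $\mathcal{L}_{\A,V}$; identifying its domain with $\{u\in\Dom(Q_{\A}):((-i\nabla+\A)^2+V)u\in\sL^2\}$ and the action with $((-i\nabla+\A)^2+V)u$ is then routine, using that on $\Dom(Q_{\A})$ the distributional identity $Q_{\A,V}(u,v)=\langle((-i\nabla+\A)^2+V)u,v\rangle$ holds by integration by parts once $((-i\nabla+\A)^2+V)u\in\sL^2$.

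The main obstacle, and the place where the hypothesis $q>1$ is genuinely used, is the Sobolev step in dimension two: $\sH^1(\R^2)$ embeds into every $\sL^{q'}$ with $q'<\infty$ but \emph{not} into $\sL^\infty$, so had we allowed $q=1$ the argument would break down. Everything else—H\"older, the splitting of $V$, the cutoff to a neighborhood of infinity, and the KLMN/representation machinery—is standard and I would keep the write-up brief, citing \cite{CFKS87} or \cite{FouHel10} for the abstract form-perturbation facts and \cite{AHS} for the magnetic Sobolev context.
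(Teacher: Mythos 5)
Your proposal is correct and follows the same skeleton as the paper's proof: diamagnetic inequality to pass from $u$ to $|u|\in\sH^1(\R^2)$, H\"older with exponents $q,q'$, the two-dimensional Sobolev embedding (which is exactly where $q>1$, hence $q'<\infty$, is used), and the KLMN/representation theorem. The one genuine difference is the mechanism by which you produce the small coefficient $\eps$ in front of $Q_{\A}(u)$ in \eqref{eq1}: you truncate $V=V\mathds{1}_{\{|V|\leq M\}}+V\mathds{1}_{\{|V|>M\}}$ and let the $\sL^q$-norm of the tail go to zero as $M\to\infty$, whereas the paper keeps $V$ intact and instead rescales the Sobolev inequality (their inequality \eqref{re-Sob}), so the $\eps$ is generated by the interpolation $\|u\|_{\sL^{2q'}}^2\leq C(\eps^{1-q'}\|u\|_{\sL^2}^2+\eps\|\nabla|u|\|_{\sL^2}^2)$ rather than by cutting off $V$. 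Both are standard ways of showing an $\sL^q$ potential is infinitesimally form-bounded; the paper's scaling route is marginally lighter because there is no auxiliary parameter $M$ and the same rescaled inequality \eqref{re-Sob} is reused later in the paper (in the proof of Theorem~\ref{loc}), while your truncation route is arguably the more textbook-familiar one and makes the role of $q>1$ equally transparent. For \eqref{eq2} and the Friedrichs extension, your treatment and the paper's coincide.
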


\begin{proof}
Let us recall the Sobolev embedding $\sH^1(\R^2)\subset \sL^{r}(\R^2)$: For all $r\geq 2$ there exist $C(r )>0$ such that for all $u\in\sH^1(\R^2)$,
\begin{align}
\label{eq:Sobolev}
\|u\|_{\sL^r(\R^2)}\leq C(r )(\|u\|_{\sL^2(\R^2)}+\|\nabla u\|_{\sL^2(\R^2)}).
\end{align}
In particular, for all $v\in\sH^1(\R^2)$ and all $\eps>0$, we apply this inequality to the rescaled function $u_{\eps}(x)=v(\eps^{\frac{r}{2}} x)$ to infer the rescaled version of the Sobolev embedding
\begin{equation}\label{re-Sob}
\|v\|_{\sL^r(\R^2)}\leq C(r )(\eps^{1-\frac{r}{2}}\|v\|_{\sL^2(\R^2)}+\eps\|\nabla v\|_{\sL^2(\R^2)}).
\end{equation}
With the diamagnetic inequality, this implies that, for all $u\in\Dom(Q_{\A})$, we have $u\in \sL^r(\R^2)$ for all $r\geq 2$ and $Q_{\A,V}(u)$ is well defined. Then let us prove \eqref{eq1}. We use the Cauchy-Schwarz inequality to get, for all $u\in\Dom(Q_{\A})$,
$$\left|\int_{\R^2} V |u|^2\dx x\right|\leq \|V\|_{\sL^q(\R^2}\|u^2\|_{\sL^{q'}(\R^2)}= \|V\|_{\sL^q(\R^2}\|u\|^2_{\sL^{2q'}(\R^2)},$$
where $\frac{1}{q}+\frac{1}{q'}=1$. Since $q>1$, we have $1<q'<+\infty$ so that with \eqref{re-Sob},
$$\|u\|^2_{\sL^{2q'}(\R^2)}\leq \tilde C(q' )(\eps^{1-q'}\|u\|^2_{\sL^2(\R^2)}+\eps\|\nabla |u|\|^2_{\sL^2(\R^2)})$$
and so
$$\|u\|^2_{\sL^{2q'}(\R^2)}\leq \tilde C(q' )(\eps^{1-q'}\|u\|^2_{\sL^2(\R^2)}+\eps Q_{A}(u)+\eps\|u\|^2_{\sL^2(\R^2)})$$
and \eqref{eq1} follows as well as the existence of the Friedrichs extension $\mathcal{L}_{\A,V}$. 

Let us now prove \eqref{eq2}. For all $u\in\Dom(Q_{\A})$ supported in $\complement D(0,R)$,
$$\left|\int_{\R^2} V |u|^2\dx x\right|\leq \|V\|_{\sL^q(\complement D(0,R))}\|u\|^2_{\sL^{2q'}(\R^2)},$$
and we deduce
\begin{multline*}
\left|\int_{\R^2} V |u|^2\dx x\right|\leq C^2\|V\|_{\sL^q(\complement D(0,R)}\| |u| \|_{\sH^1(\R^2)}^2\\
=C^2\|V\|_{\sL^q(\complement D(0,R))}\left(\|u\|^2_{\sL^2(\R^2)}+ \int_{\R^2} |(-i\nabla+\A)u|^2\dx x
\right).
\end{multline*}
It remains to use that $V\in\sL^q(\R^2)$ and to take $R$ large enough to get \eqref{eq2}. 
\end{proof}

\begin{proposition}\label{Agmon}
For $k=0$ and $p\geq 2$, the infimum in \eqref{lambda0} is a minimum. Moreover, if $\psi$ is a minimizer, there exist $\eta, C>0$ such that 
\begin{align}
\int e^{2\eta |\x|} ( |\psi(\x)|^2 + |(-i\nabla+\A(\x))\psi)(\x)|^2)\dx x \leq C \| \psi \|_{\sL^2(\R^2)}^2. 
\end{align}
\end{proposition}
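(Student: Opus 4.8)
The plan is to prove existence of a minimizer for $\lambda^{[0]}(p)$ via the standard concentration-compactness strategy (this is essentially the content of \cite{EL89}, but we sketch it since we need the setup for the decay estimate anyway), and then to derive the Agmon-type exponential weighted estimate by a Persson-type localization argument combined with the magnetic uncertainty principle \eqref{eq:MagnUncertain}.

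First I would treat existence. Fix a minimizing sequence $(\psi_j)$ with $\|\psi_j\|_{\sL^p}=1$ and $Q_{\A^{[0]}}(\psi_j)\to\lambda^{[0]}(p)$; here $\A^{[0]}=(0,x)$ gives the constant unit magnetic field. By the diamagnetic inequality (Lemma \ref{lem:diamagn}) and the Sobolev embedding \eqref{eq:Sobolev}, $(|\psi_j|)$ is bounded in $\sH^1(\R^2)$, hence $(\psi_j)$ is bounded in every $\sL^r$, $r\in[2,\infty)$. The key point is to show no vanishing and no dichotomy for the concentration function $Q_j(R)=\sup_{\x}\int_{D(\x,R)}|\psi_j|^p\dx x$. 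Vanishing would force $\|\psi_j\|_{\sL^p}\to 0$ by the standard lemma of Lions (using the $\sH^1$-bound on $|\psi_j|$), contradicting the normalization. Dichotomy is excluded by the strict subadditivity of $\mu\mapsto\lambda^{[0]}(p)\mu^{2/p}$ in $\mu$ for $p>2$ (and for $p=2$ one uses instead that the problem is the bottom of the spectrum of the Landau Hamiltonian, which is explicit), together with the translation invariance of $Q_{\A^{[0]}}$ modulo gauge: a translate $\psi(\cdot-\mathbf{a})$ changes $\A^{[0]}$ by a constant vector, which is gauge-equivalent to $\A^{[0]}$, so the infimum is translation-invariant and magnetic translations can be used to recenter the sequence. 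Therefore, after applying suitable magnetic translations, a subsequence converges weakly in $\sH^1$ (for $|\psi_j|$) and strongly in $\sL^p_{\loc}$ to a limit $\psi$ with $\|\psi\|_{\sL^p}=1$; lower semicontinuity of $Q_{\A^{[0]}}$ under weak $\sL^2$-convergence of $(-i\nabla+\A^{[0]})\psi_j$ gives $Q_{\A^{[0]}}(\psi)\leq\lambda^{[0]}(p)$, so $\psi$ is a minimizer.

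Next I would prove the weighted estimate. Let $\psi$ be a minimizer; by Lemma 1.4 (the Euler--Lagrange equation \eqref{NLS} with $h=1$, $\Omega=\R^2$) it solves $(-i\nabla+\A^{[0]})^2\psi=\lambda^{[0]}(p)|\psi|^{p-2}\psi$ and lies in $\Dom(\mathcal{L}_{\A^{[0]}})$. The main input is that the constant magnetic field $\B\equiv 1$ makes \eqref{eq:MagnUncertain} read $Q_{\A^{[0]}}(u)\geq\int|u|^2\dx x$ for all $u$, i.e. the bottom of the magnetic spectrum, and hence the bottom of the essential spectrum, is $\geq 1>0$, while the nonlinear term $\lambda^{[0]}(p)|\psi|^{p-2}$ tends to $0$ at infinity (since $\psi\in\sL^p$, so by the $\sL^\infty$-bound from elliptic regularity — using \eqref{NLS} and a bootstrap — $|\psi(\x)|\to0$). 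Thus $\psi$ satisfies, in the region $\{|\x|\geq R_0\}$, an inequality of the form $\mathcal{L}_{\A^{[0]}}\psi=W\psi$ with $\|W\|_{\sL^\infty(\{|\x|\geq R_0\})}\leq\tfrac12$ for $R_0$ large. The standard Agmon/Persson argument then applies: for a radial Lipschitz weight $\Phi_\eta(\x)=\eta|\x|$ one computes, using the IMS-type identity (Lemma \ref{IMS}) with $\chi=e^{\Phi_\eta}\zeta$ for a cutoff $\zeta$ supported outside $D(0,R_0)$,
$$
Q_{\A^{[0]}}(e^{\Phi_\eta}\zeta\psi)-\|\nabla\Phi_\eta\,e^{\Phi_\eta}\zeta\psi\|^2_{\sL^2}=\Re\langle\mathcal{L}_{\A^{[0]}}\psi,e^{2\Phi_\eta}\zeta^2\psi\rangle+\text{(cutoff errors)}.
$$
Using $Q_{\A^{[0]}}(v)\geq\|v\|_{\sL^2}^2$ on the left and $\Re\langle\mathcal{L}_{\A^{[0]}}\psi,e^{2\Phi_\eta}\zeta^2\psi\rangle=\langle W\psi,e^{2\Phi_\eta}\zeta^2\psi\rangle\leq\tfrac12\|e^{\Phi_\eta}\zeta\psi\|_{\sL^2}^2$ on the right, and choosing $\eta$ small enough that $|\nabla\Phi_\eta|^2=\eta^2\leq\tfrac14$, one absorbs terms and obtains $\|e^{\Phi_\eta}\zeta\psi\|_{\sL^2}\leq C\|\psi\|_{\sL^2(D(0,2R_0))}\leq C\|\psi\|_{\sL^2(\R^2)}$, which controls the $\sL^2$ part of the claimed integral. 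The control of the magnetic gradient part $\int e^{2\eta|\x|}|(-i\nabla+\A^{[0]})\psi|^2\dx x$ follows by feeding this back into \eqref{eq:IMS}: $Q_{\A^{[0]}}(e^{\Phi_\eta}\zeta\psi)$ itself is now bounded, and $Q_{\A^{[0]}}(e^{\Phi_\eta}\zeta\psi)\geq\tfrac12\int e^{2\Phi_\eta}\zeta^2|(-i\nabla+\A^{[0]})\psi|^2\dx x$ up to the already-controlled weighted $\sL^2$-terms (this last inequality is the standard "shift the weight inside the magnetic gradient" computation).

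The main obstacle is the existence part, specifically ruling out dichotomy in the concentration-compactness argument: this requires the strict subadditivity inequality $\lambda^{[0]}(p)(\mu_1+\mu_2)^{2/p}<\lambda^{[0]}(p)\mu_1^{2/p}+\lambda^{[0]}(p)\mu_2^{2/p}$ for $\mu_1,\mu_2>0$ (automatic for $p>2$ by concavity of $t\mapsto t^{2/p}$, but needing a separate argument — the explicit Landau level computation — when $p=2$), and it requires exploiting the magnetic translation invariance of the model problem carefully, since the magnetic Laplacian is not literally translation-invariant, only up to gauge. The decay estimate, by contrast, is routine once one observes that the constant field gives a strictly positive bottom of the essential spectrum via \eqref{eq:MagnUncertain}, so that the nonlinearity is a decaying perturbation below the essential spectrum — this is exactly the situation where Agmon estimates work with no surprises.
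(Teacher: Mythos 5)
Your overall approach to the decay estimate coincides with the paper's: identify that the constant unit magnetic field forces (via \eqref{eq:MagnUncertain}) the bottom of the essential spectrum of the perturbed operator to be $\geq 1$, note that the nonlinear potential $-\lambda^{[0]}(p)|\psi|^{p-2}$ is a decaying perturbation, and conclude by Agmon/Persson-type estimates that the eigenfunction at eigenvalue $0<1$ decays exponentially. The one genuine divergence is \emph{how} you establish that the potential is negligible at infinity. The paper does \emph{not} prove $\sL^\infty$ decay of $\psi$; instead it records that $V=-\lambda^{[0]}(p)|\psi|^{p-2}\in\sL^q(\R^2)$ for suitable $q>1$ (a consequence of the diamagnetic inequality and Sobolev embedding), and then invokes Proposition~\ref{extension}, specifically \eqref{eq2}, to get $Q_{\A^{[0]},V}(u)\geq(1-\eps)Q_{\A^{[0]}}(u)-\eps\|u\|^2$ for $u$ supported far out, hence $\inf\spe(\mathcal{L}_{\A^{[0]},V})\geq1$ by Persson's theorem. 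You instead run an elliptic bootstrap to claim $|\psi(\x)|\to0$ pointwise. This can be made to work, but be aware of the pitfall: the magnetic vector potential $\A^{[0]}=(0,x)$ has unbounded coefficients, so a naive localized elliptic estimate for $(-i\nabla+\A^{[0]})^2$ does not have constants uniform in the center of the ball. You must either perform a local gauge transformation making $\A^{[0]}$ small on each unit ball (possible because $\B$ is constant), or avoid magnetic coefficients entirely by passing to $|\psi|$ via Kato's inequality $-\Delta|\psi|\leq\lambda^{[0]}(p)|\psi|^{p-1}$ and running Moser iteration. The paper's $\sL^q$ route sidesteps this cleanly and is the more economical argument, since Proposition~\ref{extension} is already available. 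Finally, for the existence part, the paper simply cites \cite{EL89} (where the $k=0$, $p\geq2$ case is handled), whereas you re-derive it by concentration--compactness; your sketch is consistent with what is recalled in Appendix~\ref{A}, though note that the density measure used there is $\mu_n=(|u_n|^2+|(-i\nabla+\A)u_n|^2)\dx\x$, not $|u_n|^p\dx\x$, which is what lets one carry out the dichotomy/tightness analysis at the level of the quadratic form.
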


\begin{proof}
The fact that the infimum is attained is proved in \cite{EL89}. Let $\psi$ be a minimizer such that $\|\psi\|_{\sL^p(\R^2)}=1$.

We introduce the potential $V=-\lambda_{0}|\psi|^{p-2}\leq 0$ which---by \eqref{eq:Sobolev}---belongs to $\sL^q(\R^2)$ for all $q\geq \frac{2}{p-2}$. 
By Proposition \ref{extension}, we may consider the electro-magnetic Laplacian $\mathcal{L}_{\A^{[0]},V}$ defined as the Friedrichs extension of the quadratic form
$$Q_{\A^{[0]},V}(u)=\int_{\R^2}|(-i\nabla+\A^{[0]})u|^2\dx \x+\int_{\R^2}V|u|^2\dx \x,\qquad\forall u\in\mathcal{C}_{0}^\infty(\R^2).$$
We notice that $\psi\in\Dom(\mathcal{L}_{\A^{[0]},V})$, $\psi\neq 0$ and $\mathcal{L}_{\A^0,V}\psi=0$. 
With \eqref{eq2} in Proposition \ref{extension}, for all $\eps>0$, there exists $R_{0}>0$ such that, for all $u\in\Dom(Q_{\A^{[0]}})$, such that $\supp(u)\subset \complement D(0,R_{0})$, we have
$$Q_{\A^{[0]},V}(u)\geq (1-\eps)Q_{\A^{[0]}}(u)-\eps\|u\|^2_{\sL^2(\R^2)}.$$
But we have $Q_{\A^{[0]}}(u)\geq \|u\|^2_{\sL^2(\R^2)}$, so that
$$Q_{\A^{[0]},V}(u)\geq (1-2\eps)\|u\|^2_{\sL^2(\R^2)}.$$
From Persson's theorem, we infer that $\inf\spe(\mathcal{L}_{\A^{[0]},V})\geq 1$---with $\spe$ denoting the essential spectrum. Now, by definition, $\psi$ is an eigenfunction associated with the eigenvalue $0<1$ and, by Agmon estimates (see \cite{Persson60, Agmon85}), it has an exponential decay.
\end{proof}

\begin{corollary}\label{cor:AgmonL-p}
Let $k \in \N$ and 
let $\psi$ be a minimizer of \eqref{lambda0}.
\begin{itemize}
\item For any $q \in [2, \infty)$, we have $\| \psi \|_{\sL^2(\R^2)} \leq C_q \| \psi \|_{\sL^q(\R^2)} \leq \widetilde C_q \| \psi \|_{\sL^2(\R^2)}$.
\item For any $q\in [2, \infty)$, we have $e^{\eta |\x|} \psi \in \sL^q(\R^2)$ and
$$
\| e^{\eta |\x|} \psi \|_{\sL^q(\R^2)} \leq C \|\psi \|_{\sL^q(\R^2)}.
$$
\end{itemize}
\end{corollary}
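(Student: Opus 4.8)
The plan is to derive both items from the exponential decay estimate established in Proposition \ref{Agmon} (valid for $k=0$) together with the pointwise equation \eqref{NLS} and elliptic estimates. For $k \geq 1$ the corresponding decay estimate will be the analogue of Proposition \ref{Agmon}, whose proof is announced to appear in this same section; I will assume such an $\sL^2$-weighted exponential decay estimate is available for all $k$, since the present corollary is stated for general $k$.

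First I would handle the first item. The lower bound $\|\psi\|_{\sL^2} \leq C_q \|\psi\|_{\sL^q}$ is immediate once we know $\psi$ is exponentially localized: by Cauchy--Schwarz on a fixed large ball $D(0,R)$ and on its complement (using the $\sL^2$-exponential decay to control the tail), $\|\psi\|_{\sL^2}$ is controlled by $\|\psi\|_{\sL^q(D(0,R))} \leq \|\psi\|_{\sL^q(\R^2)}$, up to a constant depending on $R$ and hence on $q$. For the reverse inequality $\|\psi\|_{\sL^q} \leq \widetilde C_q \|\psi\|_{\sL^2}$, I would interpolate: for $q \in [2,\infty)$ it suffices to bound $\|\psi\|_{\sL^\infty}$ (or any $\|\psi\|_{\sL^{q_0}}$ with $q_0 > q$) by a constant times $\|\psi\|_{\sL^2}$. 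This follows from elliptic regularity applied to \eqref{NLS}: writing the equation as $(-i\nabla+\A)^2 \psi = \lambda_0 |\psi|^{p-2}\psi =: W\psi$ with $W = \lambda_0|\psi|^{p-2} \in \sL^s_{\mathrm{loc}}$ for every $s$ (again by \eqref{eq:Sobolev} and the diamagnetic inequality, since $\psi \in \sH^1$), a bootstrap using local $\sL^s$-elliptic estimates for the magnetic Laplacian gives $\psi \in W^{2,s}_{\mathrm{loc}}$ for all $s$, hence $\psi \in \sL^\infty_{\mathrm{loc}} \cap \mathcal{C}^{0}$; combined with the exponential decay this yields $\|\psi\|_{\sL^\infty(\R^2)} \leq C$ and then, using once more the exponential decay to make the $\sL^q$-integral converge, $\|\psi\|_{\sL^q(\R^2)} \leq \widetilde C_q \|\psi\|_{\sL^2(\R^2)}$, where the last step compares $\|\psi\|_{\sL^2}$ and the overall size via the already-proved first inequality (or simply normalizes). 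In effect all $\sL^q$-norms of a minimizer are comparable.

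For the second item, fix $q \in [2,\infty)$ and write $\Phi = e^{\eta|\x|}\psi$, with $\eta$ as in Proposition \ref{Agmon} (shrink $\eta$ if necessary, which is harmless). From Proposition \ref{Agmon} we have $e^{\eta|\x|}\psi \in \sL^2(\R^2)$ and, more to the point, $e^{2\eta|\x|}|\psi|^2 \in \sL^1$. To upgrade this to $\sL^q$, I would again invoke the $\sL^\infty$ bound from the first part: since $|\Phi|^q = e^{q\eta|\x|}|\psi|^q = e^{q\eta|\x|}|\psi|^{q-2}|\psi|^2 \leq \|\psi\|_{\sL^\infty}^{q-2}\, e^{q\eta|\x|}|\psi|^2$, it is enough to control $\int e^{q\eta|\x|}|\psi|^2\dx\x$. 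This last integral is exactly of the type estimated in Proposition \ref{Agmon} provided one runs the Agmon argument there with the weight $\tfrac{q}{2}\eta$ in place of $\eta$; since Proposition \ref{Agmon}'s proof produces decay at any rate below the spectral gap (here the gap is $1>0$), choosing $\eta$ small enough so that $\tfrac{q}{2}\eta$ is still admissible gives $\int e^{q\eta|\x|}|\psi|^2 \dx\x \leq C\|\psi\|_{\sL^2}^2$. Hence $\|e^{\eta|\x|}\psi\|_{\sL^q(\R^2)}^q \leq C \|\psi\|_{\sL^\infty}^{q-2}\|\psi\|_{\sL^2}^2 \leq C' \|\psi\|_{\sL^q(\R^2)}^q$, using the comparability of $\sL^\infty$, $\sL^2$ and $\sL^q$ norms from the first item.

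The main obstacle, and the only genuinely technical point, is the elliptic bootstrap giving the global $\sL^\infty$ bound on a minimizer: one must run the $W^{2,s}_{\mathrm{loc}}$ regularity iteration for the \emph{magnetic} Laplacian with the nonlinear right-hand side $\lambda_0|\psi|^{p-2}\psi$, checking at each step that the new integrability gained on $\psi$ (via Sobolev embedding of $W^{2,s}$) feeds back into higher integrability of the source term, and that the local estimates can be summed over a locally finite cover of $\R^2$ with uniformly bounded constants — which is where the exponential decay of Proposition \ref{Agmon} is crucially used to control the far-away contributions. Everything else (Cauchy--Schwarz on tails, re-running Agmon with a slightly larger weight) is routine.
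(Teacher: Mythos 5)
Your proof is correct in outline, but it takes a considerably heavier path than necessary, and the detour through elliptic regularity is exactly where it departs from the paper. The paper's argument is a one-step deduction: Proposition~\ref{Agmon} (resp.\ Proposition~\ref{Agmon-k}) bounds the weighted quantity $\int e^{2\eta|\x|}\bigl(|\psi|^2 + |(-i\nabla+\A)\psi|^2\bigr)\dx\x$, and the diamagnetic inequality (Lemma~\ref{lem:diamagn}) turns the second term into a bound on $|\nabla|\psi||$; hence $e^{\eta|\x|}|\psi|$ (and in particular $|\psi|$) belongs to $\sH^1(\R^2)$ with norm controlled by $\|\psi\|_{\sL^2}$. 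The two-dimensional Sobolev embedding \eqref{eq:Sobolev} of $\sH^1(\R^2)$ into $\sL^q(\R^2)$ for every finite $q\geq 2$ then gives both $\|\psi\|_{\sL^q}\leq C\|\psi\|_{\sL^2}$ and $\|e^{\eta|\x|}\psi\|_{\sL^q}\leq C\|\psi\|_{\sL^2}$ in one stroke, and the reverse inequality $\|\psi\|_{\sL^2}\leq C\|\psi\|_{\sL^q}$ follows from H\"older together with the exponential tail estimate, essentially as in your ball-plus-tail argument. You instead route the $\sL^q$ control through a global $\sL^\infty$ bound obtained by a $W^{2,s}_{\loc}$ bootstrap for the magnetic Laplacian. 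This can be made rigorous, but observe: (i) the bootstrap already needs $W=\lambda_0|\psi|^{p-2}\in\sL^s$ for all $s$, which you obtain from diamagnetism and Sobolev --- so the paper's mechanism is required as an input before the bootstrap even begins; and (ii) because the source term $\lambda_0|\psi|^{p-2}\psi$ is nonlinear, the $\sL^\infty$ bound you produce is not linear in $\|\psi\|_{\sL^2}$, so one should fix a normalization before running the iteration (harmless, since the claimed inequalities are homogeneous of degree one in $\psi$, but it should be said explicitly). The observation you missed is that Proposition~\ref{Agmon} controls not only $e^{\eta|\x|}\psi$ but also $e^{\eta|\x|}(-i\nabla+\A)\psi$ in $\sL^2$, so the weighted modulus is automatically in $\sH^1(\R^2)$ and the 2D Sobolev embedding finishes the argument --- no elliptic regularity is needed.
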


\begin{proof}
We give the proof for $k=0$. The proof for $k \geq 1$ is identical using Proposition~\ref{Agmon-k} below instead of Proposition~\ref{Agmon}.

It follows from Proposition~\ref{Agmon} and the diamagnetic inequality (Lemma~\ref{lem:diamagn}) that a minimizer $\psi$ satisfies $\| |\psi| \|_{\sH^1(\R^2)} \leq C \| \psi \|_{\sL^2(\R^2)}$. Therefore, we get by the Sobolev inequality \eqref{eq:Sobolev} that
$$
\| \psi \|_{\sL^q(\R^2)} \leq C \| \psi \|_{\sL^2(\R^2)},
$$
for any $q \in [2, \infty)$. On the other hand, we can use the H\"{o}lder inequality, followed by the previous inequality, to estimate
$$
\| \psi \|_{\sL^2(\R^2)}^2 \leq \| \psi \|_{\sL^q(\R^2)} \|\psi \|_{\sL^{q'}(\R^2)} \leq C \| \psi \|_{\sL^q(\R^2)} \|\psi \|_{\sL^2(\R^2)}.
$$
This proves the first parts of the corollary.

The exponential bound in $\sL^q$ now follows from Proposition~\ref{Agmon} and the diamagnetic inequality (Lemma~\ref{lem:diamagn}) and the Sobolev inequality \eqref{eq:Sobolev}.
\end{proof}

We will need the following lemma.
\begin{lemma}\label{lambdaNeu}
Let us consider $R>0$ and a family of smooth vector potentials $(\A_{n})_{n\geq 0}$ on $\overline{D(0,R)}$ such that $\B_{n}=\nabla\times\A_{n}\to+\infty$ uniformly on $\overline{D(0,R)}$.Then, the lowest eigenvalue $\lambda^\Neu_{1}(D(0,R),\A_{n})$ of the Neumann realization of the magnetic Laplacian $(-i\nabla+\A_{n})^2$ on $D(0,R)$ tends to $+\infty$.
\end{lemma}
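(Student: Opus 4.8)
The plan is to use the magnetic lower bound \eqref{eq:MagnUncertain} directly as the starting point. Indeed, for any $u \in \sH^1(D(0,R))$ (the Neumann form domain), we have
$$
Q_{\A_n}(u) = \int_{D(0,R)} |(-i\nabla + \A_n)u|^2 \dx\x \geq \int_{D(0,R)} \B_n |u|^2 \dx\x \geq \left( \inf_{\overline{D(0,R)}} \B_n \right) \|u\|_{\sL^2(D(0,R))}^2.
$$
Wait — this inequality \eqref{eq:MagnUncertain} is stated for the Dirichlet form domain $\sH^1_0$; the point of the first step is to check that it (or a suitable variant) also holds for the Neumann realization, at least up to a controllable boundary term. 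This is where one invokes that, after a gauge transformation, on the disk one can write $\A_n = \A_n^{\mathrm{bulk}} + \nabla \phi_n$ and integrate by parts: the identity $\int |(-i\nabla+\A_n)u|^2 = \int \B_n|u|^2 + \int_{\partial D(0,R)} (\ldots)$ produces a boundary term whose sign or size must be tracked. The clean route is to not prove an exact inequality but instead argue by contradiction using min-max.

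So the main line I would take is: suppose, for contradiction, that $\lambda_1^{\Neu}(D(0,R),\A_n)$ does \emph{not} tend to $+\infty$. Then there is a subsequence (still indexed by $n$) and a constant $M$ with $\lambda_1^{\Neu}(D(0,R),\A_n) \leq M$ for all $n$. Let $u_n$ be a normalized ground state, so $\|u_n\|_{\sL^2(D(0,R))} = 1$ and $Q_{\A_n}(u_n) \leq M$. From \eqref{eq:MagnUncertain} applied with the Neumann boundary term handled as above — or, more robustly, by first multiplying $u_n$ by a fixed cutoff $\chi$ supported in $D(0,R)$ and equal to $1$ on $D(0,R/2)$ and using the IMS-type splitting (Lemma~\ref{IMS}) to reduce to a compactly supported function where \eqref{eq:MagnUncertain} genuinely applies — we get
$$
\left( \inf_{D(0,R/2)} \B_n \right) \|u_n\|_{\sL^2(D(0,R/2))}^2 \leq Q_{\A_n}(\chi u_n) \leq 2 Q_{\A_n}(u_n) + C\|u_n\|_{\sL^2}^2 \leq 2M + C,
$$
so $\|u_n\|_{\sL^2(D(0,R/2))}^2 \to 0$ since $\inf_{D(0,R/2)} \B_n \to +\infty$. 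Iterating this with a sequence of shrinking-annulus cutoffs (or, more simply, repeating the argument on an exhausting family of sub-disks and combining), one concludes $\|u_n\|_{\sL^2(D(0,R))} \to 0$, contradicting $\|u_n\|_{\sL^2} = 1$.

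The only real obstacle is the boundary term: \eqref{eq:MagnUncertain} is a Dirichlet statement, and on the Neumann space $(-i\nabla+\A_n)u$ has no reason to vanish on $\partial D(0,R)$. I would handle this by the localization trick just sketched — replace $u_n$ by $\chi u_n$ with $\chi$ a fixed smooth cutoff, pay the price $h^2\|\nabla\chi\,u_n\|^2$ (here $h=1$, so just $C\|u_n\|^2$) via Lemma~\ref{IMS}, apply \eqref{eq:MagnUncertain} to the now compactly supported $\chi u_n$, and use that $\B_n \to +\infty$ uniformly to kill the mass in the bulk; then slide the cutoff outward in finitely many steps to cover an annular neighborhood of $\partial D(0,R)$ as well, since the hypothesis gives $\B_n \to +\infty$ on \emph{all} of $\overline{D(0,R)}$, boundary included. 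Everything else is routine. Note for later use (this lemma will presumably be applied to conclude that essential spectrum escapes in the vanishing-field models) that the argument in fact shows $\|u_n\|_{\sL^2(D(0,R))} \to 0$ for \emph{any} normalized sequence with $Q_{\A_n}(u_n)$ bounded, which is the quantitative content one actually wants.
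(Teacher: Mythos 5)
Your first half is exactly right and matches the paper: with a fixed cutoff $\chi$ equal to $1$ on $D(0,r)$ and compactly supported in $D(0,R)$, the IMS formula (Lemma~\ref{IMS}) plus the magnetic lower bound \eqref{eq:MagnUncertain} applied to $\chi u_n$ gives
$$
\bigl(\inf_{D(0,R)}\B_n\bigr)\int_{D(0,r)}|u_n|^2\,\dx\x\leq Q_{\A_n}(\chi u_n)\leq Q_{\A_n}(u_n)+C\|u_n\|_{\sL^2}^2\leq M+C,
$$
so $\int_{D(0,r)}|u_n|^2\to 0$ for every \emph{fixed} $r<R$. That step is fine.

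The gap is in the passage from \enquote{bulk mass tends to zero for each fixed $r<R$} to \enquote{total mass tends to zero}. The phrases \enquote{slide the cutoff outward in finitely many steps} and \enquote{exhausting family of sub-disks and combining} do not produce a valid argument: the two limits $r\to R$ and $n\to\infty$ cannot be exchanged. Concretely, a sequence $u_n$ concentrated in the annulus $\{R-1/n<|\x|<R\}$ satisfies $\int_{D(0,r)}|u_n|^2\to 0$ for every fixed $r<R$ while $\|u_n\|_{\sL^2(D(0,R))}=1$, so nothing you have shown rules this scenario out. The underlying obstruction is structural: \eqref{eq:MagnUncertain} requires compact support in $D(0,R)$, so any admissible cutoff leaves a boundary annulus untouched, and the fact that $\B_n\to+\infty$ up to $\partial D(0,R)$ does not help because the issue is the boundary term in the integration by parts, not the size of $\B_n$ near the boundary. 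Your closing remark that the argument \enquote{in fact shows $\|u_n\|_{\sL^2(D(0,R))}\to 0$} is therefore premature; what it shows is only $\|u_n\|_{\sL^2(D(0,r))}\to 0$ for fixed $r$.

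To close the gap you need a separate mechanism to control the mass near the boundary, and this is exactly what the paper supplies: it introduces the Dirichlet--Neumann Laplacian on the annulus $D(0,R)\setminus\overline{D(0,r)}$ (Dirichlet on the inner circle, Neumann on the outer), whose lowest eigenvalue $\zeta(r)$ satisfies $(R-r)^2\zeta(r)\geq\delta_0>0$ and hence $\zeta(r)\to\infty$ as $r\to R$. Via the diamagnetic inequality, $q_n(\chi_2 u_n)\geq\zeta(r)\|\chi_2 u_n\|^2$, and once the bulk mass has been shown to vanish, $\|\chi_2 u_n\|^2\to 1$, giving $\liminf_n q_n(u_n)\geq\zeta(r)$; letting $r\to R$ then contradicts the boundedness of $q_n(u_n)$. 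This annulus estimate is the missing ingredient — it is what lets one extract energy from mass that sits where the magnetic lower bound cannot reach.
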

\begin{proof}
We start by introducing an auxiliary operator.
Let $R/2 \leq r \leq R$ and consider the non-magnetic Laplace operator $-\Delta$ on the annulus $D(0,R) \setminus \overline{D(0,r)}$ with Dirichlet condition at $r$ and Neumann condition at $R$. If we let $\zeta(r)$ be the lowest eigenvalue of this operator, then it is a simple fact that $(R-r)^2 \zeta(r) \geq \delta_0 >0$ for some $\delta_0$ independent of $r$.

Let $\psi_n$ be an $\sL^2$-normalized ground state of the Neumann realization of the magnetic Laplacian $(-i\nabla+\A_{n})^2$ on $D(0,R)$. We let $q_n$ denote the quadratic form of $(-i\nabla+\A_{n})^2$.

Assume for contradiction that $\lambda^\Neu_{1}(D(0,R),\A_{n})$ remains bounded (along a subsequence).
Let $r<R$ and define $m_n(r)=\int_{D(0,r)} |\psi_n(x)|^2 \dx\x$. We start by proving that $m_n(r) \rightarrow 0$ as $n \rightarrow \infty$. In order to prove this, let us consider a partition of unity with $\chi_1^2 + \chi_2^2=1$, $\supp \chi_1 \subset D(0,r+\frac{R-r}{2})$, $\chi_1 = 1$ on $D(0,r)$ and such that $|\nabla\chi_{1}|^2+|\nabla\chi_{2}|^2\leq\frac{C}{(R-r)^2}$. By the IMS-formula \eqref{eq:IMS} we have
\begin{align}
\label{eq:IMS2-5}
q_n(\psi_n) \geq q_n( \chi_1 \psi_n) + q_n( \chi_2 \psi_n) -\frac{C}{|R-r|^2} \int_{\{r \leq |\x| \leq r+\frac{R-r}{2}\}} |\psi_n|^2\dx\x.
\end{align}
Since $ \chi_1 \psi_n$ has compact support in $D(0,R)$, we can estimate, using \eqref{eq:MagnUncertain},
\begin{align}
q_n( \chi_1 \psi_n) \geq \int \B_{n} |\chi_1 \psi_n|^2\dx\x \geq (\inf \B_{n}) m_n(r).
\end{align}
Since, by assumption, $q_n(\psi_n)$ is bounded, we conclude that $m_n(r) \rightarrow 0$.

But now we can reconsider \eqref{eq:IMS2-5} to get, with the diamagnetic inequality,
\begin{align}
q_n(\psi_n) \geq \zeta(r) \| \chi_2 \psi_n \|_2^2 - \frac{C m_{n}(r+\frac{R-r}{2})}{|R-r|^2}.
\end{align}
Since $m(r+\frac{R-r}{2}) \rightarrow 0$ and $ \| \chi_2 \psi_n \|_2^2 \rightarrow 1$ as $n \rightarrow \infty$, we find
\begin{align}
\liminf_{n \rightarrow \infty} q_n(\psi_n) \geq \zeta(r).
\end{align}
But $\zeta(r) \rightarrow \infty$ as $r \rightarrow R$, which is a contradiction.
\end{proof}

\begin{proposition}\label{Agmon-k}
For $k\geq 1$ and $p>2$, the infimum in \eqref{lambda0} is a minimum. Moreover, if $\psi$ is a minimizer, there exist $\eta, C>0$ such that 
\begin{align}
\int e^{2\eta |\x|} ( |\psi(\x)|^2 + |(-i\nabla+\A(\x))\psi)(\x)|^2)\dx x \leq C \| \psi \|_{\sL^2(\R^2)}^2. 
\end{align}
\end{proposition}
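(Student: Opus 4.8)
The plan is to mimic the proof of Proposition~\ref{Agmon}, replacing the use of the uniform lower bound $Q_{\A^{[0]}}\geq\|\cdot\|^2_{\sL^2}$ (which fails when the field vanishes) by a \emph{local} coercivity argument based on Lemma~\ref{lambdaNeu}. The only place the nonvanishing of $\B$ was used above was to show that the reference electro-magnetic operator $\mathcal{L}_{\A^{[k]},V}$ with $V=-\lambda^{[k]}(p)|\psi|^{p-2}$ has essential spectrum bounded below by a positive constant; once that is known, the existence of a minimizer decaying exponentially follows by exactly the same Persson/Agmon argument, so I will concentrate on that point.

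First I would establish existence of a minimizer. Take an $\sL^p$-normalized minimizing sequence $(\psi_j)$ for \eqref{lambda0} with $k\geq1$. As in the first lemma of the paper, $(\psi_j)$ is bounded in the magnetic Sobolev space; however, unlike on a bounded domain, we only get weak $\sL^2_{\loc}$ convergence, so the real issue is to prevent mass from escaping to infinity. Here the magnetic field $\B^{[k]}(x,y)=-x^k$ grows (up to sign) as $|x|\to\infty$, so on the region $\{|x|\geq R\}$ the form is coercive: by \eqref{eq:MagnUncertain} applied after a partition of unity (IMS, Lemma~\ref{IMS}), any piece of $\psi$ supported in $\{|x|\geq R\}$ contributes at least $\sim R^k\|\cdot\|^2_{\sL^2}$ to $Q_{\A^{[k]}}$, while the strip $\{|x|\le R\}$ still extends to infinity in the $y$-direction. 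Translation invariance in $y$ is what one must break: but the nonlinear quotient is $y$-translation invariant, so one may recenter. I would argue that after a suitable $y$-translation the minimizing sequence does not vanish, invoke the concentration-compactness dichotomy (as the authors announce they do for vanishing fields in the upper-bound section), and rule out dichotomy using the strict subadditivity of $p\mapsto$ the model infimum for $p>2$ together with the $x$-confinement from the growing field; this yields a nonzero weak limit which, by weak lower semicontinuity of $Q_{\A^{[k]}}$ and $\sL^p$-norm convergence on compacts plus the coercivity tail estimate, is a genuine minimizer. I expect this concentration-compactness step — in particular ruling out the ``splitting'' alternative purely in the $y$-direction where there is no confinement — to be the main obstacle.

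Next, granted a minimizer $\psi$ with $\|\psi\|_{\sL^p(\R^2)}=1$, set $V=-\lambda^{[k]}(p)|\psi|^{p-2}$. By the Sobolev inequality \eqref{eq:Sobolev} and the diamagnetic inequality, $|\psi|\in\sH^1(\R^2)$, hence $|\psi|^{p-2}\in\sL^q(\R^2)$ for every $q\geq\frac{2}{p-2}$, so Proposition~\ref{extension} applies and gives the Friedrichs extension $\mathcal{L}_{\A^{[k]},V}$, with $\psi\in\Dom(\mathcal{L}_{\A^{[k]},V})$ and $\mathcal{L}_{\A^{[k]},V}\psi=0$. To locate the essential spectrum I would combine two facts. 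Outside a large disc, $\eqref{eq2}$ of Proposition~\ref{extension} reduces matters to $Q_{\A^{[k]}}$, and there I claim $Q_{\A^{[k]}}(u)\geq c\,\|u\|^2_{\sL^2}$ for $u$ supported in $\complement D(0,R_0)$ with $c$ independent of $R_0$ (large): on the part of the support with $|x|$ large this follows from \eqref{eq:MagnUncertain} since $|\B^{[k]}|=|x|^k\to\infty$; on the part with $|x|$ bounded but $|y|$ large one uses a Neumann bracketing on a family of discs of fixed radius whose centers go to infinity along the $x$-axis — on each such disc $\B^{[k]}$ is large, so Lemma~\ref{lambdaNeu} gives a lower bound tending to $+\infty$, in particular eventually $\geq1$. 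Gluing these with a partition of unity (IMS) and absorbing the $h^2|\nabla\chi|^2$-type errors (which are $O(1)$ here) yields $Q_{\A^{[k]}}(u)\geq \tfrac12\|u\|^2_{\sL^2}$ for such $u$, hence by \eqref{eq2} with $\eps$ small, $Q_{\A^{[k]},V}(u)\geq\tfrac14\|u\|^2_{\sL^2}$. Persson's theorem then gives $\inf\spe(\mathcal{L}_{\A^{[k]},V})\geq\tfrac14>0$.

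Finally, since $\psi$ is an eigenfunction of $\mathcal{L}_{\A^{[k]},V}$ at the eigenvalue $0$, which lies strictly below the bottom of the essential spectrum, the standard Agmon argument (multiply the equation by $e^{2\Phi}\psi$ with $\Phi$ a Lipschitz function comparable to $\eta|\x|$, use the IMS-type identity to absorb $|\nabla\Phi|^2$ into the spectral gap) gives $\int e^{2\eta|\x|}|\psi|^2\dx\x\leq C\|\psi\|^2_{\sL^2}$ for $\eta$ small enough. The gradient part then follows by testing the equation $(-i\nabla+\A^{[k]})^2\psi=-V\psi$ against $e^{2\eta|\x|}\psi$ and using the already-established $\sL^2$ exponential bound together with $V\in\sL^\infty$ away from... — more simply, from an elliptic (Caccioppoli) estimate on annuli summed over dyadic scales — which yields the bound on $\int e^{2\eta|\x|}|(-i\nabla+\A^{[k]})\psi|^2\dx\x$, completing the proof. \qed
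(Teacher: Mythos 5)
Your existence argument follows the paper's proof closely: both reduce, via Lions' concentration--compactness, to the tightness case and then use Lemma~\ref{lambdaNeu} to rule out escape of the centre of mass in the $x$-direction (where the field grows). The paper's Appendix~\ref{A} supplies precisely the vanishing/dichotomy exclusion you flag as "the main obstacle," with strict subadditivity $\alpha^{2/p}+(1-\alpha)^{2/p}>1$ for $p>2$ killing dichotomy, so this part of your plan is sound and essentially the same as the paper's.

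Your coercivity argument for the Persson step, however, has a genuine gap. To bound $\spe(\mathcal{L}_{\A^{[k]},V})$ from below, you claim $Q_{\A^{[k]}}(u)\geq c\|u\|^2_{\sL^2}$ for $u$ supported in $\complement D(0,R_0)$ by splitting into $\{|x|\ \text{large}\}$ and the strip $\{|x|\ \text{bounded},\ |y|\ \text{large}\}$, and on the strip you invoke Lemma~\ref{lambdaNeu} "on a family of discs of fixed radius whose centers go to infinity along the $x$-axis --- on each such disc $\B^{[k]}$ is large." But the strip region lives over \emph{bounded} $x$ with $|y|\to\infty$; discs covering it have centres moving along the $y$-axis, where $\B^{[k]}(x,y)=-x^k$ is \emph{bounded}, so Lemma~\ref{lambdaNeu} gives nothing. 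The fix is much simpler and makes the entire local-coercivity construction unnecessary: by the very definition of $\lambda^{[k]}(2)$ in \eqref{lambda0}, one has the \emph{global} bound $Q_{\A^{[k]}}(u)\geq\lambda^{[k]}(2)\|u\|^2_{\sL^2}$ for all $u\in\Dom(Q_{\A^{[k]}})$, and $\lambda^{[k]}(2)=\eigone{\Qk(\alpha_0^{[k]})}>0$ (this positivity is implicit in the discussion around Proposition~\ref{Agmon-0}, since the Montgomery operator has positive ground-state energy). Then the argument of Proposition~\ref{Agmon} carries over verbatim with $1$ replaced by $\lambda^{[k]}(2)$: by \eqref{eq2}, for $u$ supported far out, $Q_{\A^{[k]},V}(u)\geq(1-\eps)\lambda^{[k]}(2)\|u\|^2-\eps\|u\|^2$, so $\inf\spe(\mathcal{L}_{\A^{[k]},V})\geq\lambda^{[k]}(2)>0$, the eigenvalue $0$ of $\psi$ sits below the essential spectrum, and Agmon estimates give the claimed exponential decay. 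Your closing Caccioppoli remark for the gradient term is fine.
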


\begin{proof}
The existence of a minimizer is not a consequence of the results in \cite{EL89}. Nevertheless we will also use the concentration-compactness method. For simplicity of notation we will write $\A$ instead of $\A^{[k]}$ in this proof. Since $k \geq 1$ is fixed there is no room for confusion.

Let us consider a minimizing sequence $(u_{n})$, with $\|u_{n}\|_{\sL^p(\R^2)}=1$. We introduce the density measure $\mu_{n}=(|u_{n}|^2+|(-i\nabla+\A)u_{n}|^2)\dx\x$ whose total mass $\mu_{n}(\R^2)$ is bounded and we can assume that it converges to some $\mu>0$ up to the extraction of a subsequence. Indeed, if $\mu=0$, by the diamagnetic inequality, we would get that $(|u_{n}|)$ goes to $0$ in $\sH^1(\R^2)$ and thus in $\sL^p(\R^2)$.

Since $p>2$, as in \cite{EL89} and using \cite[Lemma 1]{Lions84}, we are easily reduced to the \enquote{tightness} case (see Appendix \ref{A}). In other words we may find a sequence $\x_{n}=(x_{n}, y_{n})$ such that
\begin{equation}\label{compactness}
\forall \eps>0,\quad\exists R>0, \quad \forall n\geq 1,\quad \mu_{n}(\complement D(\x_{n},R))\leq \eps.
\end{equation}
We introduce the translated function $\hat u_{n}(\x)=u_{n}(\x+\x_{n})$ and $\A_{n}(\x)=\A(\x+\x_{n})$.
Notice at this point that with our choice of vector potential $\A$ only depends on $\x = (x,y)$ through the first coordinate $x$.
We have
$$Q_{\A}(u_{n})=Q_{\A_{n}}(\hat u_{n})=\int_{\R^2} |D_{x}\hat u_{n}|^2+\left|\left(D_{y}+\frac{1}{k+1}(x-x_{n})^{k+1}\right)\hat u_{n}\right|^2 \dx\x.$$
From \eqref{compactness}, we get $\int_{\complement D(0,R)}|\hat u_{n}|^2\dx\x\leq\eps$, and we also have
$$\int_{D(0,R)} |D_{x}\hat u_{n}|^2+\left|\left(D_{y}+\frac{1}{k+1}(x-x_{n})^{k+1}\right)\hat u_{n}\right|^2 \dx\x\leq C,$$
for some $C$ independent of $n$.
By the min-max principle, we have
\begin{multline*}
\lambda_{1}^\Neu(D(0,R),\A_{n})\int_{D(0,R)}|\hat u_{n}|^2\dx\x\\
\leq\int_{D(0,R)} |D_{x}\hat u_{n}|^2+\left|\left(D_{y}+\frac{1}{k+1}(x-x_{n})^{k+1}\right)\hat u_{n}\right|^2 \dx\x.
\end{multline*}
If $|x_{n}|\to+\infty$, we get, by Lemma \ref{lambdaNeu} that $\lambda_{1}^\Neu(D(0,R),\A_{n})\to +\infty$ and thus there exists $N\geq 1$ such that, for all $n\geq N$,
$$\int_{D(0,R)}|\hat u_{n}|^2\dx\x\leq \eps.$$
We infer that $(u_{n})$ tends to $0$ in $\sL^2(\R^2)$. Since the diamagnetic inequality implies that $(|u_{n}|)_{n\geq 0}$ is bounded in $\sH^1$, we get a contradiction to the assumption that $\| u_n \|_p = 1$ by using a Sobolev embedding. Therefore we may assume that $x_{n}$ converges to some $x_{*}\in \R$. From this we infer that 
$(\nabla\hat u_{n})$ is bounded in $\sH^1(D(0,R))$ and since $\int_{\complement D(0,R)}|\hat u_{n}|^2\dx\x\leq\eps$, we can use the Rellich's Criterion \cite[Theorem XIII.65]{ReSi78} to see that we may assume that $\hat u_{n}$ converges in $\sL^2(\R^2)$ to some $\hat u_{*}$. In fact, the relative compactness is also verified in $\sL^q(\R^2)$ with $q\geq 2$, since the H\"older inequality provides
$$\int_{\complement D(0,R)} |\hat u_{n}|^q \dx\x\leq \left(\int_{\complement D(0,R)} |\hat u_{n}|^2 \dx\x\right)^{\frac{1}{2}}\left(\int_{\complement D(0,R)} |\hat u_{n}|^{2(q-1)} \dx\x\right)^{\frac{1}{2}}$$
and that, by diamagnetism,
$$\int_{\complement D(0,R)} |\hat u_{n}|^{2(q-1)} \dx\x\leq  \|\hat u_{n}\|_{\sL^{2q-2}(\R^2)}^{q-1}\leq C Q_{\A_{n}}(\hat u_{n})^{q-1}\leq \tilde C.$$
Therefore we may assume that $\hat u_{n}$ converges in $\sL^p(\R^2)$ so that we deduce, by translation invariance, $1=\|u_{n}\|_{\sL^p(\R^2)}=\|\hat u_{n}\|_{\sL^p(\R^2)}\to \|\hat u_{*}\|_{\sL^p(\R^2)}$ and thus $\|\hat u_{*}\|_{\sL^p(\R^2)}=1$. Moreover, up to extractions of subsequences and a diagonal argument, we can assume that $(\hat u_{n})$ converges to $\hat u_{*}$ weakly in $\sH^1_{\loc}(\R^2)$.

Then, we can conclude the proof. Indeed, we have, for all $R>0$ and $n\geq 1$,
$$Q_{\A}(u_{n})=Q_{\A_{n}}(\hat u_{n})\geq \int_{D(0,R)} |D_{x}\hat u_{n}|^2+\left|\left(D_{y}+\frac{1}{k+1}(x-x_{n})^{k+1}\right)\hat u_{n}\right|^2 \dx\x$$
so that, due to the weak convergence in $\sH^1_{\loc}(\R^2)$,
$$\liminf_{n\to+\infty}Q_{\A}(u_{n})\geq \int_{D(0,R)} |D_{x}\hat u_{*}|^2+\left|\left(D_{y}+\frac{1}{k+1}(x-x_{*})^{k+1}\right)\hat u_{*}\right|^2 \dx\x.$$
Therefore, 
$$\liminf_{n\to+\infty}Q_{\A}(u_{n})\geq Q_{\A_{*}}(\hat u_{*})=Q_{\A}(u_{*}),$$
where $u_{*}(x,y)=\hat u_{*}(x+x_{*},y)$, $\A_{*}(x,y) = (0, \frac{1}{k+1}(x-x_{*})^{k+1})$, and we also have $\|u_{*}\|_{\sL^p(\R^2)}=1$.
\end{proof}

To stress the difference between the linear ($p=2$) and the non-linear problem ($p>2$) we include the following simple result.

\begin{proposition}\label{Agmon-0}
For $k\geq 1$ and $p=2$, the infimum in \eqref{lambda0} is not a minimum.
\end{proposition}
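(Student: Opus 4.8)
The plan is to show that when $k\geq 1$ and $p=2$, the infimum $\lambda^{[k]}(2)=\inf Q_{\A^{[k]}}(\psi)/\|\psi\|_{\sL^2}^2$ is \emph{not} attained. The key structural fact is that $\A^{[k]}(x,y)=(0,x^{k+1}/(k+1))$ depends only on $x$, so the associated magnetic Laplacian is translation-invariant in $y$ and one can use a (partial) Fourier transform in $y$. Writing $\psi$ in terms of its Fourier transform $\widehat\psi(x,\xi)$ in the $y$-variable, one gets
\begin{align*}
Q_{\A^{[k]}}(\psi)=\int_{\R}\int_{\R}\Bigl(|\partial_x\widehat\psi(x,\xi)|^2+\bigl(\xi+\tfrac{x^{k+1}}{k+1}\bigr)^2|\widehat\psi(x,\xi)|^2\Bigr)\dx x\dx\xi,
\end{align*}
and likewise $\|\psi\|_{\sL^2}^2=\int\int|\widehat\psi|^2$. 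For each fixed $\xi$ the $x$-integrand is the quadratic form of the one-dimensional Schr\"odinger operator $\mathfrak h(\xi):=-\partial_x^2+(\xi+x^{k+1}/(k+1))^2$ on $\sL^2(\R_x)$, which has compact resolvent and hence a lowest eigenvalue $\nu_k(\xi)>0$. The first step is therefore to identify
\begin{align*}
\lambda^{[k]}(2)=\inf_{\xi\in\R}\nu_k(\xi),
\end{align*}
which follows from the fiber decomposition: the bottom of the spectrum of a direct integral is the essential infimum of the bottoms of the fibers.

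The second step is to locate where this infimum of $\nu_k$ is attained. By translating $x$, the operator $\mathfrak h(\xi)$ is unitarily equivalent (via $x\mapsto -x$, which sends $x^{k+1}$ to $(-1)^{k+1}x^{k+1}$) and by scaling one sees that $\nu_k(\xi)\to\infty$ as $|\xi|\to\infty$ for the potentials in question, so the infimum is attained at some finite $\xi_0$. The crucial point is that $\nu_k$ is \emph{not constant}: since $\xi\mapsto\mathfrak h(\xi)$ is a genuinely nonconstant analytic family (the potential $(\xi+x^{k+1}/(k+1))^2$ changes with $\xi$ in a way that is not a mere gauge/translation when $k\geq 1$ — unlike the case $k=0$, where the substitution $x\mapsto x-\xi$ removes $\xi$ entirely and $\nu_0\equiv 1$), the minimum of $\nu_k$ is achieved on a set of $\xi$ of Lebesgue measure zero (in fact at finitely many points, but measure zero is all that is needed). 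This contrast with $k=0$ is exactly the phenomenon the proposition is meant to highlight.

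The third step is the contradiction argument. Suppose $\psi\neq 0$ were a minimizer, $\|\psi\|_{\sL^2}=1$. Then
\begin{align*}
\lambda^{[k]}(2)=\int_\R\Bigl(\int_\R\bigl(|\partial_x\widehat\psi|^2+(\xi+\tfrac{x^{k+1}}{k+1})^2|\widehat\psi|^2\bigr)\dx x\Bigr)\dx\xi\geq\int_\R\nu_k(\xi)\,\|\widehat\psi(\cdot,\xi)\|_{\sL^2(\R_x)}^2\dx\xi,
\end{align*}
and since $\int_\R\|\widehat\psi(\cdot,\xi)\|_{\sL^2(\R_x)}^2\dx\xi=1$, equality forces $\nu_k(\xi)=\lambda^{[k]}(2)$ for a.e.\ $\xi$ in the support of $\xi\mapsto\|\widehat\psi(\cdot,\xi)\|_{\sL^2}$, and also that $\widehat\psi(\cdot,\xi)$ is (a multiple of) the ground state of $\mathfrak h(\xi)$ for those $\xi$. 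But $\{\xi:\nu_k(\xi)=\lambda^{[k]}(2)\}$ has measure zero, so $\widehat\psi(\cdot,\xi)=0$ for a.e.\ $\xi$, hence $\psi=0$, a contradiction.

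The main obstacle is the second step: rigorously establishing that $\nu_k$ is nonconstant and that its minimum set has measure zero. For this I would argue by analyticity: $\xi\mapsto\mathfrak h(\xi)$ is an analytic family of type (A) with compact resolvent, so $\nu_k$ extends to a real-analytic function of $\xi$ on all of $\R$ (the lowest eigenvalue is simple, being the ground state of a Schr\"odinger operator in one dimension); a nonconstant real-analytic function has a discrete, hence measure-zero, zero set for $\nu_k-\lambda^{[k]}(2)$. To see $\nu_k$ is nonconstant it suffices to compare two values, or to note $\nu_k(\xi)\to\infty$ as $|\xi|\to\infty$ while $\nu_k$ is bounded below by $0$ and not identically $+\infty$; alternatively one can compute $\nu_k'(\xi)$ by the Feynman–Hellmann formula $\nu_k'(\xi)=2\int(\xi+x^{k+1}/(k+1))|u_\xi(x)|^2\dx x$ with $u_\xi$ the normalized ground state, and check it does not vanish identically. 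Once nonconstancy is in hand the rest is the routine direct-integral bookkeeping sketched above.
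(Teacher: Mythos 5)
Your proof is correct and uses the same essential mechanism as the paper---partial Fourier transform in the $y$-variable, reducing to the fibered family of one-dimensional Montgomery operators $\mathfrak h(\xi)=-\partial_x^2+(\xi+x^{k+1}/(k+1))^2$ with ground-state energy $\nu_k(\xi)$---but you close the argument by a different key lemma. The paper cites \cite[Theorem~1.3]{FouPer13} for the \emph{uniqueness} of the minimizing parameter and then derives a contradiction via a mass-splitting: choose $\delta$ small so that at least half the $\sL^2$-mass of $\tilde\psi$ lies where $\nu_k$ exceeds its minimum by a fixed $\eps>0$. You instead use Kato analytic perturbation theory (type-(A) family with compact resolvent, simple ground state) to get that $\nu_k$ is real-analytic, hence, being nonconstant (it blows up at infinity), its minimum level set $\{\nu_k=\inf\nu_k\}$ is discrete and thus Lebesgue-null; the equality analysis in the fiber inequality then forces $\widehat\psi(\cdot,\xi)=0$ for a.e.\ $\xi$, i.e.\ $\psi=0$. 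Both routes hinge on the same nontrivial input, $\nu_k(\xi)\to\infty$ as $|\xi|\to\infty$, which you assert but do not justify (the paper also draws it from the FouPer13 citation, so this is acceptable, though for a self-contained proof you would need the harmonic-approximation estimate near the moving minima of the potential). Your argument trades a specific uniqueness theorem for the somewhat heavier but standard analyticity machinery; it only needs the minimum set to be null rather than a single point, so it is marginally more robust, while the paper's version is more elementary once the citation is granted.
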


\begin{proof}For $\alpha \in \R$ and $k \geq 1$, we define the `Montgomery operator' (or anharmonic oscillator) of order $k$,
\begin{equation*}
\Qk(\alpha) = D_{t}^2+\Bigl(\frac{t^{k+1}}{k+1}-\alpha\Bigr)^2,
\end{equation*}
as a self-adjoint operator in $\sL^2({\mathbb R})$.
Let $\eigone{\Qk(\alpha)}$ be its ground state eigenvalue.
By \cite[Theorem~1.3]{FouPer13}, for all $k \geq 1$, there exists a unique point $\alpha_0^{[k]} \in \R$ such that the function $\alpha \mapsto \eigone{\Qk(\alpha)}$ attains its minimum at $\alpha_0^{[k]}$. 
Also $\eigone{\Qk(\alpha)} \rightarrow \infty$ as $\alpha \rightarrow \infty$.
By partial Fourier transform in the $y$-coordinate and thanks to \cite[Theorem XIII.85]{ReSi78}, we get
$$
\lambda^{[k]}(p=2) = \eigone{\Qk(\alpha_0^{[k]})}.
$$
Suppose now by contradiction that $\psi$ is an $\sL^2$-normalized eigenfunction of the magnetic Laplacian $(-i\nabla +\A^{[k]})^2$ corresponding to $\lambda^{[k]}(p=2)$.
Let $\tilde{\psi}(x,\alpha) \in \sL^2(\R^2)$ be the partial Fourier transform of $\psi$ in the $y$ variable. In particular,
\begin{align}
\lambda^{[k]}(2) = \int_{\R} \left(\int_{\R} |D_{x}\tilde\psi|^2 + \left|\left(\alpha - \frac{x^{k+1}}{k+1}\right)\tilde\psi\right|^2 \dx x
\right) \dx\alpha.
\end{align}
By normalization and choosing $\delta >0$ small enough, we may assume that we have $\int_{\{ |\alpha - \alpha_0^{[k]}|\geq \delta \}}  |\tilde\psi(x,\alpha)|^2 \dx x\dx\alpha \geq 1/2$.
Using the continuity with respect to $\alpha$ and the uniqueness of the minimum, there exists $\eps >0$ such that 
\begin{align*}
\inf_{\{ |\alpha - \alpha_0^{[k]}|\geq \delta \}} \eigone{\Qk(\alpha)} \geq \eigone{\Qk(\alpha_0^{[k]})} + \eps,
\end{align*}
and so we get
\begin{multline*}
\lambda^{[k]}(2)\geq\int_{\R^2} \eigone{\Qk(\alpha)}|\tilde\psi(x,\alpha)|^2\dx x\dx\alpha\\
\geq (\eigone{\Qk(\alpha_0^{[k]})} + \eps) \int_{\{|\alpha - \alpha_0^{[k]}|\geq \delta\}}|\tilde\psi(x,\alpha)|^2 \dx x\dx\alpha+\eigone{\Qk(\alpha_0^{[k]})} \int_{\{|\alpha - \alpha_0^{[k]}|<\delta\}}|\tilde\psi(x,\alpha)|^2 \dx x\dx\alpha
\end{multline*}
and thus 
$$\lambda^{[k]}(2)\geq \eigone{\Qk(\alpha_0^{[k]})} + \frac{\eps}{2}.$$
This is a contradiction and finishes the proof.
\end{proof}

\section{Upper bounds}\label{Upper}
This section is devoted to the proof of the upper bounds in Theorems \ref{theo1} and \ref{theo2}.

\subsection{Non-vanishing magnetic field}
In this section we work under the assumptions of Theorem \ref{theo1}. Let us consider $v$ a minimizer associated with \eqref{lambda0} for $k=0$ and let
$$\psi(\x)=h^{-\frac{1}{p}}e^{i\frac{\phi(\x)}{h}}\chi(\x)v\left(\frac{\x-\x_{0}}{h^{\frac{1}{2}}}\right).$$
Here $\x_{0}$ denotes a point in $\Omega$ where the minimum of the magnetic field is obtained, $\chi \in C^{\infty}_{0}(\Omega)$, with $\chi \equiv 1$ in a neighborhood of $\x_0$, and $\phi$ is a real function such that $\tilde \A=\A+\nabla \phi$
satisfies in a fixed neighborhood of $\x_{0}$:
$$\left|\tilde \A(\x)-b_{0}\tilde\A^{[0]}(\x)\right|\leq C |\x-\x_{0}|^2,\qquad \tilde\A^{[0]}(\x)=\A^{[0]} (\x-\x_{0})$$
We have, by Corollary \ref{cor:AgmonL-p},
$$\int_{\Omega} |\psi(\x)|^p\dx \x=\int_{\R^2} \chi^p\left(\x_{0}+h^{\tfrac{1}{2}}\y\right)|v(\y)|^p\dx \y=\int_{\R^2} |v(\y)|^p \dx\y+\mathcal{O}(h^{\infty})\|v\|^p_{\sL^p(\R^2)}$$
and, with the \enquote{IMS} formula, 
\begin{multline*}
\mathcal{Q}_{h,\A}(\psi)=h^{-\frac{2}{p}}\int_{\Omega}  \chi^2\left(\x_{0}+h^{\tfrac{1}{2}}\y\right)\left|\left(-ih\nabla+\tilde\A\right)v\left(\frac{\x-\x_{0}}{h^{\frac{1}{2}}}\right)\right|^2\dx \x\\
+\mathcal{O}(h^{\infty})\|v\|_{\sL^2(\R^2)}^2
\end{multline*}
so that, for all $\eps>0$,
\begin{multline*}
 h^{\frac{2}{p}}\mathcal{Q}_{h,\A}(\psi)\leq(1+\eps)\int_{\Omega} \left|\left(-ih\nabla+b_{0}\tilde\A^{[0]}\right)v\left(\frac{\x-\x_{0}}{h^{\frac{1}{2}}}\right)\right|^2\dx \x\\
+(1+\eps^{-1})\int_{\Omega} \left|\left(\tilde\A-b_{0}\tilde\A^{[0]}\right)v\left(\frac{\x-\x_{0}}{h^{\frac{1}{2}}}\right)\right|^2\dx \x+\mathcal{O}(h^{\infty})\|v\|_{\sL^2(\R^2)}^2.
\end{multline*}
Due to the exponential decay of $v$ given in Proposition \ref{Agmon}, 
we have 
$$\int_{\R^2} |\y|^4 |v(\y)|^2\dx \y<+\infty,$$
and thus
\begin{multline*}
 h^{\frac{2}{p}}\mathcal{Q}_{h,\A}(\psi)\leq(1+\eps)h^2\int_{\R^2} \left|\left(-i\nabla+b_{0}\tilde\A^{[0]}\right)v(\y)\right|^2\dx \y\\
+C^2(1+\eps^{-1})h^3\int_{\R^2} |v(\y)|^2\dx \x+\mathcal{O}(h^{\infty})\|v\|_{\sL^2(\R^2)}^2.
\end{multline*}
We have by \eqref{eq:MagnUncertain},
$$\int_{\R^2} \left|\left(-i\nabla+b_{0}\tilde\A^{[0]}\right)v(\y)\right|^2\dx \y\geq b_{0}\int_{\R^2} |v(\y)|^2 \dx \y.$$
We deduce the upper bound:
\begin{align*}
 h^{\frac{2}{p}}\mathcal{Q}_{h,\A}(\psi)\leq\left((1+\eps)h^2+b_{0}^{-1}C^2(1+\eps^{-1})h^3\right)\int_{\R^2} \left|\left(-i\nabla+b_{0}\tilde\A^{[0]}\right)v(\y)\right|^2\dx \y.
\end{align*}
We take $\eps=h^{1/2}$ so that,
$$ h^{\frac{2}{p}}\lambda(\Omega,\A,p,h)\leq\left(h^2+Ch^{5/2}\right)\frac{\int_{\R^2} \left|\left(-i\nabla+b_{0}\tilde\A^{[0]}\right)v(\y)\right|^2\dx \y}{\left(\int_{\R^2} |v(\y)|^p \dx \y\right)^{\frac{2}{p}}}.$$
We get
$$\lambda(\Omega,\A,p,h)\leq h^{-\frac{2}{p}}\left(h^2+Ch^{5/2}\right)\lambda(1,b_{0}\tilde\A^{[0]},p).$$
By homogeneity and gauge invariance, we have
$$\lambda(\R^2, b_{0}\tilde\A^{[0]},p,1)=b_{0}^{\frac{2}{p}}\lambda(\R^2,\A^{[0]},p,1).$$
We infer the upper bound
\begin{align}\label{eq:Upper_Theorem1.4}
\lambda(\Omega,\A,p,h)\leq h^{-\frac{2}{p}} \left(b_{0}^{\frac{2}{p}}h^2\lambda(\R^2,\A^{[0]},p,1)+Ch^{\frac{5}{2}}\right),
\end{align}
So the upper bound of Theorem~\ref{theo1} is proved.

\subsection{Vanishing magnetic field}
Let us now work under the assumptions of Theorem \ref{theo2}. We can define the standard tubular coordinates in a neighborhood of a point $\x_{0}\in\Gamma$ which minimizes the normal derivative of $\B$, $\Gamma\ni x\mapsto \partial_{\n,\Gamma}\B(x)$. These coordinates are defined through the local diffeomorphism $\Phi$ : $(s,t)\mapsto c(s)+t\n(c(s))=\x$ where $c$ is a parametrization of $\Gamma$ such that $|c'(s)|=1$ and $\n(c(s))$ is the inward pointing normal of $\Gamma$ at $c(s)$, that is $\det(c'(s),\n(c(s)))=1$. We may assume that $\Phi(0,0)=\x_{0}$. For further details, we refer to \cite[Appendix F]{FouHel10}. In these new coordinates the quadratic form becomes, for functions $\psi$ supported near $\x_{0}$,
\begin{align}\label{eq:locCOord}
\mathcal{Q}_{h,\A}(\psi)=\widetilde{\mathcal{Q}}_{h,\A}(\tilde\psi)=\int \left\{|hD_{t}\tilde\psi|^2+(1-tk(s))^{-2}|(hD_{s}+\tilde A)\tilde\psi|^2 \right\} (1-tk(s))\dx s\dx t,
\end{align}
with $\tilde\psi(s,t)=e^{i\varphi(s,t)/h}\psi(\Phi(s,t))$, where $\varphi$ corresponds to a local change of gauge. Moreover we have let $k(s) = \gamma''(s) \cdot \n(\gamma(s))$, and 
$$\tilde A(s,t)=-\int_{0}^t (1-uk(s))\tilde{\B}(s,u)\dx u,\qquad \mbox{ with }\quad\tilde{\B}(s,t)=\B(\Phi(s,t)).$$
Note that we also have, as soon as $\psi$ is supported near $\Gamma$,
$$\int_{\Omega} |\psi|^p \dx\x=\int |\tilde\psi|^p (1-tk(s))\dx s\dx t.$$
We may write the following Taylor estimate
$$\left|\tilde A(s,t)+\gamma_{0}\frac{t^2}{2}\right|\leq C(|s|^3+|t|^3).$$
Let us consider $w$ the complex conjugate of a minimizer associated with \eqref{lambda0} for $k=1$, normalized in $\sL^p(\R^2)$ and let
$$\tilde\psi(s,t)=h^{-\frac{2}{3p}}\gamma^{\frac{2}{3p}}_{0}\chi(s,t)w\left(\gamma_{0}^{\frac{1}{3}} \frac{s}{h^{\frac{1}{3}} }, \gamma_{0}^{\frac{1}{3}}\frac{t}{h^{\frac{1}{3} }}\right),
$$
where $\chi \in C_0^{\infty}(\R^2)$, $\chi \equiv 1$ near $0$, with $\supp \chi$ sufficiently small.

We have, using the exponential decay of $w$,
$$\int |\tilde\psi(s,t)|^p (1-tk(s))\dx s\dx t=\int |w(\sigma,\tau)|^p (1-\tau \gamma_{0}^{-\frac{1}{3}}h^{\frac{1}{3}} k(h^{\frac{1}{3}}\gamma_{0}^{-\frac{1}{3}}\sigma))\dx \sigma\dx \tau+\mathcal{O}(h^\infty)\|w\|_{\sL^p(\R^2)}^p$$
so that,
$$\int |\tilde\psi(s,t)|^p (1-tk(s))\dx s\dx t\geq (1-Ch^{\frac{1}{3}})\int |w(\sigma,\tau)|^p \dx\sigma \dx\tau.$$
Thanks to support considerations, we get
\begin{multline*}
\int \left\{|hD_{t}\tilde\psi|^2+(1-tk(s))^{-2}|(hD_{s}+\tilde A)\tilde\psi|^2 \right\} (1-tk(s))\dx s\dx t\\
\leq \int\left\{|hD_{t}\tilde\psi|^2+|(hD_{s}+\tilde A)\tilde\psi|^2 \right\} \dx s\dx t+C\int |t|\left\{|hD_{t}\tilde\psi|^2+|(hD_{s}+\tilde A)\tilde\psi|^2 \right\} \dx s\dx t.
\end{multline*}
With the exponential decay of $w$, we have
$$\int |t|\left\{|hD_{t}\tilde\psi|^2+|(hD_{s}+\tilde A)\tilde\psi|^2 \right\} \dx s\dx t\leq Ch^{\frac{5}{3}} h^{-\frac{4}{3p}}h^{\frac{2}{3}}. $$
In the same way we get
\begin{multline*}
\int\left\{|hD_{t}\tilde\psi|^2+|(hD_{s}+\tilde A)\tilde\psi|^2 \right\} \dx s\dx t\\
\leq\int\left\{|hD_{t}\tilde\psi|^2+\left|\left(hD_{s}-\gamma_{0}\frac{t^2}{2}\right)\tilde\psi\right|^2 \right\} \dx s\dx t+Ch^{\frac{5}{3}} h^{-\frac{4}{3p}}h^{\frac{2}{3}}
\end{multline*}
and 
$$\int\left\{|hD_{t}\tilde\psi|^2+\left|\left(hD_{s}-\gamma_{0}\frac{t^2}{2}\right)\tilde\psi\right|^2 \right\} \dx s\dx t=\gamma_{0}^{\frac{4}{3p}}h^{\frac{4}{3}} h^{\frac{2}{3}}h^{-\frac{4}{3p}}\lambda^{[1]}(p )+\mathcal{O}(h^{\infty}).$$
We deduce
$$\frac{\mathcal{Q}_{h,\A}(\psi)}{\|\psi\|^2_{\sL^p(\R^2)}}\leq(1+Ch^{\frac{1}{3}})\gamma_{0}^{\frac{4}{3p}}h^2 h^{-\frac{4}{3p}}\lambda^{[1]}(p )$$
and the conclusion immediately follows.
\section{Lower bounds}\label{Lower}
We are now interested in the lower bounds in Theorems \ref{theo1} and \ref{theo2}.

\subsection{Quadratic partition of unity and reconstruction of $\sL^p$-norm}
Let us introduce a quadratic partition of unity \enquote{with small interaction supports}. In the following we will for notational convenience use the $\infty$-norm on $\R^2$, explicitly $|\x |_{\infty} = \max( |x_1|, |x_2|)$.
\begin{lemma}\label{lemma-partition}
Let us consider $E=\{(\alpha,\rho,h,\ell)\in(\R_{+})^3\times\Z^2 : \alpha\geq \rho\}$. There exists a family of smooth cutoff functions $(\chi^{[\ell]}_{\alpha,\rho,h})_{(\alpha,\rho,h,\ell)\in E}$ on $\R^2$ such that $0\leq \chi^{[\ell]}_{\alpha,\rho,h}\leq 1$, 
\begin{align*}
\chi^{[\ell]}_{\alpha,\rho,h}&=1, \quad \text{  on }\quad |\x-(2h^{\rho}+h^{\alpha})\ell|_{\infty}\leq h^{\rho},\\
\chi_{\alpha,\rho,h}&=0, \quad\text{ on }\quad |\x-h^{\rho}\ell|_{\infty}\geq h^{\rho}+h^{\alpha},
\end{align*} and such that
$$\sum_{\ell\in\Z^2}\left(\chi_{\alpha,\rho,h}^{[\ell]}\right)^2=1.$$
Moreover there exists $D>0$ such that, for all $h>0$,
\begin{equation}\label{partition-remainder}
\sum_{\ell\in\Z^2} |\nabla\chi_{\alpha,\rho,h}^{[\ell]}|^2\leq Dh^{-2\alpha}.
\end{equation}
\end{lemma}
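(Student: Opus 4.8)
The plan is to build the partition of unity by the standard mollification-of-tiles construction, tuned to the two relevant length scales $h^\rho$ (the size of the tiles) and $h^\alpha$ (the width of the overlap region, small since $\alpha\geq\rho$). First I would fix a single smooth bump function $\theta\in C^\infty_0(\R)$ with $0\leq\theta\leq 1$, $\theta\equiv 1$ on $[-1,1]$ and $\supp\theta\subset[-2,2]$, and set, for $\ell=(\ell_1,\ell_2)\in\Z^2$,
\[
\widetilde\chi^{[\ell]}_{\alpha,\rho,h}(\x)=\theta\!\left(\frac{x_1-h^{\rho}\ell_1}{h^{\alpha}}\right)\theta\!\left(\frac{x_2-h^{\rho}\ell_2}{h^{\alpha}}\right)\cdot\mathbf 1\text{-type factor on the coarse cell},
\]
so that each $\widetilde\chi^{[\ell]}$ is supported in the enlarged cube centered at $h^\rho\ell$ of half-side $h^\rho+h^\alpha$ and equals $1$ on the shrunk cube of half-side $h^\rho$; the centers $h^\rho\ell$ tile $\R^2$ with spacing $h^\rho$, so the supports have controlled overlap (bounded multiplicity, independent of $h$, since $h^\alpha\leq h^\rho$). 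Then I would define $\chi^{[\ell]}_{\alpha,\rho,h}=\widetilde\chi^{[\ell]}_{\alpha,\rho,h}\big/\big(\sum_{m\in\Z^2}(\widetilde\chi^{[m]}_{\alpha,\rho,h})^2\big)^{1/2}$, which is the usual trick to make the squares sum to one; one checks the denominator is smooth and bounded below by a positive constant because at every point at least one $\widetilde\chi^{[\ell]}$ equals $1$.

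The support statements in the lemma are then immediate from the support of $\theta$: $\chi^{[\ell]}=1$ exactly where $\widetilde\chi^{[\ell]}=1$ and all other $\widetilde\chi^{[m]}$ that overlap it are accounted for in the normalizing sum — this is where the slightly shifted center $2h^\rho+h^\alpha)\ell$ appearing in the lemma's first line comes from (it is the point where, after normalization by neighbours, the function is still identically $1$); I would simply choose the combinatorics of $\theta$ and the cell indexing so that the two displayed conditions hold verbatim. For the gradient bound \eqref{partition-remainder}, I would first note $|\nabla\widetilde\chi^{[\ell]}|\leq C h^{-\alpha}$ pointwise by the chain rule (the $h^\alpha$ in the denominator of the argument of $\theta$), and that at any fixed $\x$ only a bounded number $N$ (independent of $h$, $\ell$) of the $\widetilde\chi^{[m]}$ are nonzero. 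Differentiating the quotient gives $\nabla\chi^{[\ell]}$ as a sum of $\nabla\widetilde\chi^{[\ell]}$ over the (bounded) denominator plus $\widetilde\chi^{[\ell]}$ times the gradient of the normalizing factor, the latter again being a bounded combination of terms of size $h^{-\alpha}$; hence $|\nabla\chi^{[\ell]}|\leq C' h^{-\alpha}$ pointwise, and since at each point only $N$ of the summands are nonzero, $\sum_\ell|\nabla\chi^{[\ell]}_{\alpha,\rho,h}|^2\leq N (C')^2 h^{-2\alpha}=:D h^{-2\alpha}$.

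The only mildly delicate point — the one I'd treat as the main obstacle — is making the \emph{exact} support conditions as written (the asymmetry between the line with $2h^\rho+h^\alpha$ and the line with $h^\rho$) consistent with a genuine partition: one must verify that after normalization the function really is still identically $1$ on the claimed inner region, which requires that on that region no neighbouring $\widetilde\chi^{[m]}$, $m\neq\ell$, is active, i.e. the inner region lies in the set where $\widetilde\chi^{[\ell]}\equiv 1$ and all others vanish. This is a matter of choosing the ratio of the ``$=1$ radius'' of $\theta$ to its support radius and the cell spacing appropriately, and then the verification is a short interval-arithmetic check; I will carry it out explicitly but it introduces no real difficulty. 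All constants produced ($C,C',N,D$) are manifestly independent of $h$ because the construction is scale-covariant: rescaling $\x\mapsto h^{-\rho}\x$ turns the family into a \emph{fixed} ($h$-independent) partition up to the overlap parameter $h^{\alpha-\rho}\leq 1$, for which uniformity is clear.
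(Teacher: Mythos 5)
Your overall strategy (tile $\R^2$ by cubes, mollify the indicators, normalize by the square root of the sum of squares) is exactly the one the paper uses, and the treatment of the gradient bound \eqref{partition-remainder} — chain rule gives $|\nabla\widetilde\chi^{[\ell]}|\lesssim h^{-\alpha}$, quotient rule and bounded overlap transfer this to the normalized family — is fine. But there is a real gap, and it is precisely the point you flag at the end and then wave away as ``a short interval-arithmetic check'': with your choice of lattice spacing $h^\rho$ (centers at $h^\rho\ell$), the verification \emph{fails}. If $\widetilde\chi^{[\ell]}$ equals $1$ on the cube of half-side $h^\rho$ around $h^\rho\ell$, then for any two adjacent indices $\ell,m$ the sets where $\widetilde\chi^{[\ell]}=1$ and $\widetilde\chi^{[m]}=1$ overlap on a slab of width $h^\rho$. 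On that overlap $S=\sum_{m}(\widetilde\chi^{[m]})^2\geq 2$, so the normalized $\chi^{[\ell]}=\widetilde\chi^{[\ell]}/\sqrt{S}\leq 1/\sqrt{2}<1$. In fact at no point is exactly one $\widetilde\chi^{[m]}$ equal to $1$ while all neighbours vanish, so the normalized functions are \emph{never} identically $1$ anywhere, and the first support condition of the lemma cannot hold. Your suggestion that the shifted center $(2h^\rho+h^\alpha)\ell$ appearing in the lemma's first line is ``the point where, after normalization by neighbours, the function is still identically $1$'' does not rescue this: the lattice $h^\rho\Z^2$ and the lattice $(2h^\rho+h^\alpha)\Z^2$ are genuinely different objects, and one cannot index the family by the former while placing the ``equals-$1$'' cubes at the latter.

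The fix is to take the lattice spacing to be $2h^\rho+h^\alpha$, not $h^\rho$. This is what the paper does: the one-dimensional cutoff $\chi_{\alpha,\rho,h}$ is $1$ on $|x|\leq h^\rho+\tfrac12 h^\alpha$, $0$ on $|x|\geq h^\rho+h^\alpha$, and the centers are at $(2h^\rho+h^\alpha)\ell$. Then (i) the ``equals-$1$'' intervals of adjacent cells exactly abut but do not overlap, so $1\leq S\leq 4$; (ii) a neighbouring $\widetilde\chi^{[m]}$ vanishes as soon as $|\x-(2h^\rho+h^\alpha)\ell|_\infty\leq h^\rho$, so on that inner cube $S\equiv 1$ and $\chi^{[\ell]}\equiv 1$, matching the first display of the lemma. (The ``$h^\rho\ell$'' in the second display of the lemma statement appears to be a typo for $(2h^\rho+h^\alpha)\ell$; the proof uses the latter centers throughout, and this is the consistent choice.) With this corrected spacing your argument for the gradient bound goes through verbatim. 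As written, though, the proposal does not establish the two support conditions, and the needed modification is to the spacing, not merely to the ``combinatorics of $\theta$''.
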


\begin{proof}
Let us consider $F=\{(\alpha,\rho,h)\in(\R_{+})^3 : \alpha\geq \rho\}$.
There exists a family of smooth cutoff functions of one real variable $(\chi_{\alpha,\rho,h})_{(\alpha,\rho,h)\in F}$ such that $0\leq \chi_{\alpha,\rho,h}\leq 1$, $\chi_{\alpha,\rho,h}=1$ on $|x|\leq h^{\rho} + \frac{1}{2} h^{\alpha}$ and $\chi_{\alpha,\rho,h}=0$ on $|x|\geq h^{\rho}+h^{\alpha}$, and such that for all $(\alpha,\rho)$ with $\alpha\geq\rho>0$, there exists $C>0$ such that for all $h>0$, $|\nabla\chi_{\alpha,\rho,h}|\leq Ch^{-\alpha}$.
Then, we define :
$$S_{\alpha,\rho,h}(x)=\sum_{\ell\in\Z^2}\chi^2_{\alpha,\rho, h}\big(x_{1}-(2h^{\rho}+h^{\alpha})\ell_{1}\big)\chi^2_{\alpha,\rho, h}\big(x_{2}-(2h^{\rho}+h^{\alpha})\ell_{2}\big),$$
and we have 
$$\forall x\in\R^2,\qquad 1\leq S_{\alpha,\rho,h}(x)\leq 4.$$
We let
$$\chi_{\alpha,\rho,h}^{[\ell]}(x)=\frac{\chi_{\alpha,\rho, h}(x_{1}-(2h^{\rho}+h^{\alpha})\ell_{1})\chi_{\alpha,\rho, h}(x_{2}-(2h^{\rho}+h^{\alpha})\ell_{2})}{\sqrt{S_{\alpha,\rho,h}(x)}},$$
which satisfies the wished estimates by standard arguments.
\end{proof}

Given a grid and a non-negative and integrable function $f$, the following lemma states that, up to a translation of the grid, the mass of $f$ carried by a slightly thickened grid is controlled by a slight fraction of the total mass of $f$.
\begin{lemma}\label{translation}
For $r>0$ and $\delta>0$, we define the grid $\Lambda_{r}=((r\Z)\times\R)\cup(\R\times (r\Z))$ and the thickened grid
$$\Lambda_{r,\delta}=\{x\in\R^2 : \dist(x,\Lambda_{r})\leq \delta\}.$$
Let us consider a non-negative function $f$ belonging to $\sL^1(\R^2)$. Then there exists $\tau(r,\delta,f)=\tau\in\R^2$ such that :
$$\int_{\Lambda_{r,\delta}+\tau} f(x)\dx x\leq\frac{3\delta}{r+2\delta}\int_{\R^2} f(x)\dx x.$$
\end{lemma}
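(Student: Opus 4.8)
The plan is to prove Lemma~\ref{translation} by an averaging (pigeonhole) argument over translations of the grid. First I would observe that the thickened grid $\Lambda_{r,\delta}$ is a periodic set: it is invariant under translations by $(r,0)$ and $(0,r)$, so it suffices to work on the torus $\T_r = (\R/r\Z)^2$ and to average over translations $\tau$ ranging over a single fundamental cell $[0,r)^2$. Concretely, for a translation $\tau \in [0,r)^2$ set $I(\tau) = \int_{\Lambda_{r,\delta}+\tau} f(x)\dx x = \int_{\R^2} \mathbf{1}_{\Lambda_{r,\delta}+\tau}(x) f(x) \dx x$, and I would compute the average $\frac{1}{r^2}\int_{[0,r)^2} I(\tau)\dx\tau$ by Fubini, swapping the $\tau$- and $x$-integrals.

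The key computation is then the inner integral $\frac{1}{r^2}\int_{[0,r)^2}\mathbf{1}_{\Lambda_{r,\delta}+\tau}(x)\dx\tau$, which by translation invariance equals $\frac{1}{r^2}\,\big|\,([0,r)^2)\cap(\Lambda_{r,\delta}-x)\,\big|$, i.e.\ the normalized area of $\Lambda_{r,\delta}$ inside one period cell — a quantity independent of $x$. Since $\Lambda_{r,\delta}$ within a cell of side $r$ consists of the $\delta$-neighborhood of two perpendicular lines, its area in $[0,r)^2$ is at most $2 \cdot (2\delta) \cdot r = 4\delta r$ (two strips of width $2\delta$ and length $r$); I would actually want the sharper bound coming from the union, namely area $\le 2\delta r + 2\delta r - (2\delta)^2 = 4\delta r - 4\delta^2$, but it is cleaner to just bound the measure of the thin strips directly. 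Dividing by $r^2$ gives a density bounded by $\frac{3\delta}{r+2\delta}$ — here I would need to check the elementary inequality relating $4\delta r$ (or the sharper union bound) to $\frac{3\delta}{r+2\delta}\,r^2 = \frac{3\delta r^2}{r+2\delta}$; note the constant $3$ in the statement is not tight for small $\delta$, and the extra room presumably absorbs the fact that the strips have width $2\delta$ on each side, for total cross-section $2\delta$, and that they overlap at the crossing, so the true density is roughly $\frac{2\delta}{r}$ minus a correction, comfortably below $\frac{3\delta}{r+2\delta}$. So the averaged value satisfies $\frac{1}{r^2}\int_{[0,r)^2} I(\tau)\dx\tau \le \frac{3\delta}{r+2\delta}\int_{\R^2} f\dx x$.

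Finally, since the average of $I(\tau)$ over $\tau\in[0,r)^2$ is at most $\frac{3\delta}{r+2\delta}\int_{\R^2}f$, there must exist at least one $\tau$ (in fact a positive-measure set of them) with $I(\tau) \le \frac{3\delta}{r+2\delta}\int_{\R^2}f$; picking any such $\tau = \tau(r,\delta,f)$ concludes the proof. The only mild subtlety — and the step I would be most careful about — is making sure the geometric density bound is genuinely $\le \frac{3\delta}{r+2\delta}$ with the stated constant rather than something like $\frac{4\delta}{r}$: one wants to exploit that each coordinate hyperplane contributes a strip whose \emph{half-widths} are $\delta$, i.e.\ the relevant total width for one family of strips is $2\delta$ not $4\delta$, so that the density from the two families is at most $\frac{2\delta r + 2\delta r}{r^2}$ — still $\frac{4\delta}{r}$, hmm — so more likely the intended reading is that $\Lambda_{r,\delta}$ is the $\delta$-neighborhood, contributing width $2\delta$ per line, and the bound $\frac{3\delta}{r+2\delta}$ is simply a convenient weakening valid because we may assume $\delta$ small relative to $r$ (else the statement is vacuous as the right-hand side exceeds $\int f$ once $\frac{3\delta}{r+2\delta}\ge 1$, i.e.\ $\delta \ge r$). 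I would therefore either restrict to the regime $\delta < r$ where the union bound $|\Lambda_{r,\delta}\cap[0,r)^2| \le 2(2\delta)r - (2\delta)^2 \le 3\delta r$ holds for $2\delta \le r$ (checking $4\delta r - 4\delta^2 \le 3\delta r \iff \delta r \le 4\delta^2 \iff r \le 4\delta$, which fails — so one really needs $|\Lambda_{r,\delta}\cap [0,r)^2|/r^2 \le \frac{3\delta}{r+2\delta}$ directly), and verify the clean inequality $\frac{2\delta(2r-2\delta)}{r^2} \le \frac{3\delta}{r+2\delta}$, equivalently $2(2r-2\delta)(r+2\delta) \le 3r^2$, i.e.\ $4r^2 + 4r\delta - 8\delta^2 \le 3r^2$, i.e.\ $r^2 + 4r\delta - 8\delta^2 \le 0$, which again only holds for $\delta$ comparable to $r$ — so in fact the honest statement must use that one family of strips has width exactly $2\delta$, giving density $\le \frac{2\cdot 2\delta\cdot r}{r^2}=\frac{4\delta}{r}$, and then $\frac{4\delta}{r}\le\frac{3\delta}{r+2\delta}$ is false; hence I suspect the grid lines contribute strips of width $\delta$ (not $2\delta$) in the paper's normalization, giving density $\le \frac{2\delta r}{r^2} = \frac{2\delta}{r} \le \frac{3\delta}{r+2\delta}$ as soon as $2(r+2\delta)\le 3r$, i.e.\ $4\delta\le r$. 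I will sort out this constant bookkeeping carefully in the write-up; the structural argument (Fubini + pigeonhole over translations) is robust regardless.
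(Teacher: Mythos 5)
Your structural plan --- Fubini over a family of translates followed by a pigeonhole --- is the right one and is, in essence, what the paper does, although the paper averages over a \emph{discrete} one-parameter family of diagonal shifts $\tau = j\delta\,\tfrac{1}{\sqrt2}(1,1)$ with $j=0,\ldots,\lfloor r/(2\delta)\rfloor+1$, rather than over all of the cell $[0,r)^2$; it then bounds the resulting overlap function $g_{r,\delta}(x)=\sum_j\mathds{1}_{\Lambda_{r,\delta}+j\delta\mathsf{e}}(x)$ pointwise and pigeonholes over $j$. Your continuous two-dimensional averaging is a perfectly valid alternative and is arguably cleaner.

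Your anguished bookkeeping at the end, however, is not a defect of your write-up: it is detecting a real problem with the constant in the statement. Your density computation is correct --- $|\Lambda_{r,\delta}\cap[0,r)^2| = 4\delta r - 4\delta^2$, so the average of $I(\tau)$ over the cell equals $\frac{4\delta(r-\delta)}{r^2}\int f$, and you correctly checked that $\frac{4\delta(r-\delta)}{r^2} > \frac{3\delta}{r+2\delta}$ whenever $r > (2\sqrt3-2)\delta$. This is not merely a failure of your particular averaging scheme: if $f$ is (an $\sL^1$-approximation of) the uniform density on $[0,Mr]^2$, then $I(\tau)/\int f \to \frac{4\delta(r-\delta)}{r^2}$ as $M\to\infty$ \emph{uniformly in $\tau$}, so no choice of $\tau$ can bring the ratio below $\frac{3\delta}{r+2\delta}$ when $r \gtrsim \delta$. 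The lemma is false with the constant $3$, and the paper's own proof has a matching slip: the claimed pointwise bound $g_{r,\delta}\le 3$ a.e.\ should be $g_{r,\delta}\le 6$ (each of the two coordinate families of strips can contribute up to three overlapping translates at a generic point), and even granting $g\le 3$ the pigeonhole gives $\frac{3}{\lfloor r/(2\delta)\rfloor+2}\le\frac{6\delta}{r+2\delta}$, not $\frac{3\delta}{r+2\delta}$. So stop trying to rescue the $3$ by reinterpreting the strip widths; it is not there to be rescued.

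Instead, state and prove the lemma with the constant your argument actually produces, say $\frac{4\delta(r-\delta)}{r^2}$ (which, by the uniform-density example, is sharp), or simply $4\delta/r$ for legibility. This is harmless downstream: the only place the lemma is invoked is in the proof of Lemma~\ref{Lp-partition}, whose conclusion is $\int_\Omega\varphi_{\alpha,\rho}|\psi|^p\dx\x\le Ch^{\alpha-\rho}\int|\psi|^p\dx\x$ with an unspecified constant $C$, and Theorem~\ref{theo1} uses only the resulting $h^{\alpha-\rho}$ scaling of the remainder; the specific numerical constant in Lemma~\ref{translation} is irrelevant to every result in the paper.
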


\begin{proof}
We let $\e=\frac{1}{\sqrt{2}}(1,1)$. We notice that
$$\sum_{j=0}^{\lfloor \frac{r}{2\delta}\rfloor+1}\int_{\Lambda_{r,\delta}+j\delta \e} f(x) \dx x=\int_{\R^2} g_{r,\delta}(x) f(x)\dx x,\qquad\mbox{ with } g_{r,\delta}(x)=\sum_{j=0}^{\lfloor \frac{r}{2\delta}\rfloor+1} \mathds{1}_{\Lambda_{\delta}+j\delta \e}(x).$$
We have, for almost all $x$, $g_{r,\delta}(x)\leq 3$, so that we get
$$\sum_{j=0}^{\lfloor \frac{r}{2\delta}\rfloor+1}\int_{\Lambda_{r,\delta}+j\delta \e} f(x) \dx x\leq 3\int_{\R^2} f(x)\dx x.$$
Therefore, there exists $j\in\left\{0,\ldots,\lfloor \frac{r}{\delta}\rfloor+1\right\}$, such that
$$\int_{\Lambda_{r,\delta}+j\delta \e} f(x) \dx x\leq \frac{3}{\lfloor \frac{r}{2\delta}\rfloor+2}\int_{\R^2} f(x)$$
and the conclusion easily follows.
\end{proof}
We can now establish the following lemma which permits to recover the total $\sL^p$-norm from the local contributions defined by the quadratic partition of unity.
\begin{lemma}\label{Lp-partition}
Let $p \geq 2$.
Let us consider the partition of unity $(\chi_{\alpha,\rho,h}^{[\ell]})$ defined in Lemma \ref{lemma-partition}, with $\alpha>\rho>0$. There exist $C>0$ and $h_{0}>0$ such that for all $\psi\in \sL^p(\Omega)$ and $h\in(0,h_{0})$, there exists $\tau_{\alpha,\rho,h,\psi}=\tau\in\R^2$ such that
$$\sum_{\ell} \int_{\Omega} |\tilde\chi_{\alpha,\rho,h}^{[\ell]}\psi(\x)|^p \dx \x\leq\int_{\Omega}|\psi(\x)|^p\dx \x\leq (1+Ch^{\alpha-\rho})\sum_{\ell} \int_{\Omega} |\tilde\chi_{\alpha,\rho,h}^{[\ell]}\psi(\x)|^p \dx \x,$$
with $\tilde\chi_{\alpha,\rho,h}^{[\ell]}(\x)=\tilde\chi_{\alpha,\rho,h}^{[\ell]}(\x-\tau)$.
Moreover, the translated partition $(\tilde\chi_{\alpha,\rho,h}^{[\ell]})$ still satisfies \eqref{partition-remainder}.
\end{lemma}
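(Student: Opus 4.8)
The left-hand inequality is immediate: since $\sum_\ell (\tilde\chi^{[\ell]}_{\alpha,\rho,h})^2 = 1$ pointwise and $p\geq 2$, for each fixed $\x$ one has $\sum_\ell |\tilde\chi^{[\ell]}_{\alpha,\rho,h}(\x)\psi(\x)|^p = |\psi(\x)|^p \sum_\ell |\tilde\chi^{[\ell]}_{\alpha,\rho,h}(\x)|^p \leq |\psi(\x)|^p \sum_\ell |\tilde\chi^{[\ell]}_{\alpha,\rho,h}(\x)|^2 = |\psi(\x)|^p$, because each $\tilde\chi^{[\ell]}_{\alpha,\rho,h}$ takes values in $[0,1]$ and $p/2\geq1$. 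Integrating over $\Omega$ gives the first inequality, valid for \emph{any} translation $\tau$. So the content of the lemma is the reverse inequality, and in particular the choice of a good translation $\tau$.

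For the reverse inequality, the plan is to exploit that at a point $\x$ lying away from the overlap region of the partition, exactly one cutoff equals $1$ and all the others vanish, so there $\sum_\ell |\tilde\chi^{[\ell]}_{\alpha,\rho,h}(\x)\psi(\x)|^p = |\psi(\x)|^p$ exactly. The overlaps are contained in a thickened grid: from the support properties in Lemma \ref{lemma-partition}, the set where more than one $\chi^{[\ell]}_{\alpha,\rho,h}$ is nonzero is contained in a neighborhood of the grid $\Lambda_{r}$ with spacing $r = 2h^\rho + h^\alpha$ and thickening $\delta$ of order $h^\alpha$. First I would split $\int_\Omega |\psi|^p = \int_{\Omega\setminus(\Lambda_{r,\delta}+\tau)} |\psi|^p + \int_{(\Lambda_{r,\delta}+\tau)\cap\Omega} |\psi|^p$. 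On the complement of the thickened grid the integrand equals $\sum_\ell |\tilde\chi^{[\ell]}_{\alpha,\rho,h}\psi|^p$ exactly, so that piece is already $\leq \sum_\ell \int_\Omega |\tilde\chi^{[\ell]}_{\alpha,\rho,h}\psi|^p$. For the remaining piece over the thickened grid, I apply Lemma \ref{translation} with $f = |\psi|^p \mathds{1}_\Omega \in \sL^1$: this produces a translation $\tau = \tau_{\alpha,\rho,h,\psi}$ such that $\int_{(\Lambda_{r,\delta}+\tau)} |\psi|^p \leq \frac{3\delta}{r+2\delta}\int_{\R^2} |\psi|^p \leq C h^{\alpha-\rho}\int_\Omega|\psi|^p$, using $\delta \sim h^\alpha$, $r \sim h^\rho$ and $\alpha>\rho$ so that $\delta/r \sim h^{\alpha-\rho}\to 0$.

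Combining the two pieces yields $\int_\Omega|\psi|^p \leq \sum_\ell \int_\Omega |\tilde\chi^{[\ell]}_{\alpha,\rho,h}\psi|^p + Ch^{\alpha-\rho}\int_\Omega|\psi|^p$, and absorbing the last term on the left (possible for $h$ small since $Ch^{\alpha-\rho}<1/2$, say) gives $\int_\Omega|\psi|^p \leq (1 + C'h^{\alpha-\rho})\sum_\ell \int_\Omega |\tilde\chi^{[\ell]}_{\alpha,\rho,h}\psi|^p$ after renaming the constant. Finally, the assertion that the translated partition still satisfies \eqref{partition-remainder} is automatic: translation commutes with $\nabla$ and does not change the pointwise sum $\sum_\ell|\nabla\tilde\chi^{[\ell]}_{\alpha,\rho,h}|^2$, which therefore remains bounded by $Dh^{-2\alpha}$. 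The only genuinely delicate point is the bookkeeping of the grid spacing and thickening parameters so that the overlap region is correctly identified as a thickened grid of the type handled by Lemma \ref{translation}; everything else is elementary.
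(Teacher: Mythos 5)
Your proof is correct and follows essentially the same strategy as the paper's: bound the overlap error by an integral over a thickened grid, invoke Lemma~\ref{translation} to select a favorable translation $\tau$, and then absorb the resulting $Ch^{\alpha-\rho}\int|\psi|^p$ term. The only cosmetic difference is bookkeeping: the paper writes $|\psi|^p=\sum_\ell(\tilde\chi^{[\ell]})^p|\psi|^p+\varphi_{\alpha,\rho}|\psi|^p$ with $\varphi_{\alpha,\rho}=\sum_\ell\big((\tilde\chi^{[\ell]})^2-(\tilde\chi^{[\ell]})^p\big)$ supported on the thickened grid, whereas you split the domain of integration into the thickened grid and its complement (where exactly one cutoff equals $1$); the two are equivalent.
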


\begin{proof}
The first inequality is obvious since the cutoff functions are bounded by $1$ and their squares sum to unity. For the second inequality, we write, for any translation $\tau$,
$$\int_{\Omega}|\psi(\x)|^p\dx \x=\sum_{\ell}\int_{\Omega}\left(\tilde\chi_{\alpha,\rho,\ell}^{[\ell]}\right)^p|\psi(\x)|^p\dx \x+\int_{\Omega}\varphi_{\alpha,\rho}(\x)|\psi(\x)|^p\dx \x,$$
where
$$\varphi_{\alpha,\rho}=\sum_{\ell}\left(\left(\tilde\chi_{\alpha,\rho,\ell}^{[\ell]}\right)^2-\left(\tilde\chi_{\alpha,\rho,\ell}^{[\ell]}\right)^p\right).$$
The smooth function $\varphi_{\alpha,\rho}$ is supported on $\tau+\Lambda_{h^\rho+\frac{1}{2}h^{\alpha},2h^{\alpha}}$ and 
$$\int_{\Omega}\varphi_{\alpha,\rho}(\x)|\psi(\x)|^p\dx \x\leq \int_{\tau+\Lambda_{h^\rho+\frac{1}{2}h^{\alpha},2h^{\alpha}}}f(\x)\dx \x,$$
where $f(\x)=|\psi(\x)|^p$ for $\x\in\Omega$ and $f(\x)=0$ elsewhere. Thus, by Lemma \ref{translation}, we find $\tau$ such that
$$\int_{\Omega}\varphi_{\alpha,\rho}(\x)|\psi(\x)|^p\dx \x\leq Ch^{\alpha-\rho}\int_{\R^2} |\psi(\x)|^p\dx\x$$
and the conclusion easily follows. 
\end{proof}

\subsection{Lower bound: non-vanishing magnetic field}
This section is devoted to the proof of the lower bound in Theorem \ref{theo1}.
\subsubsection{A lower bound for the eigenvalue}
Let us consider $\psi\in\Dom(\mathcal{Q}_{h,\A})$. With the \enquote{IMS} formula associated with the partition of unity $(\tilde\chi_{\alpha,\rho,h}^{[\ell]})$ that is adapted to $\psi$ (see Lemma~\ref{Lp-partition}), we infer
$$\mathcal{Q}_{h,\A}(\psi)=\sum_{\ell}\mathcal{Q}_{h,\A}(\tilde\chi_{\alpha,\rho,h}^{[\ell]}\psi) -h^2\sum_{\ell}\|\nabla\tilde\chi_{\alpha,\rho,h}^{[\ell]}\psi\|_{\sL^2(\Omega)}^2.$$
We have
\begin{equation}\label{IMS-lb}
\mathcal{Q}_{h,\A}(\psi)\geq\sum_{\ell}\left(\mathcal{Q}_{h,\A}(\tilde\chi_{\alpha,\rho,h}^{[\ell]}\psi) -Dh^{2-2\alpha}\|\tilde\chi_{\alpha,\rho,h}^{[\ell]}\psi\|_{\sL^2(\Omega)}^2\right).
\end{equation}
By the min-max principle, we get
\begin{equation}\label{relative-control}
\lambda(\Omega,\A, 2, h)\|\tilde\chi_{\alpha,\rho,h}^{[\ell]}\psi\|_{\sL^2(\Omega)}^2\leq \mathcal{Q}_{h,\A}(\tilde\chi_{\alpha,\rho,h}^{[\ell]}\psi)
\end{equation}
and we recall that (see \cite[Theorem 1.1]{HelMo01})
\begin{equation}\label{HM}
\lambda(\Omega,\A, 2, h)=b_{0}h+\mathcal{O}(h^{\frac{3}{2}})
\end{equation}
so that
$$\mathcal{Q}_{h,\A}(\psi)\geq(1-Dh^{1-2\alpha})\sum_{\ell}\mathcal{Q}_{h,\A}(\tilde\chi_{\alpha,\rho,h}^{[\ell]}\psi).$$
Then, we bound the local energies from below. Thanks to support considerations (recall that $\alpha \geq \rho$), we have, modulo a local change of gauge $e^{i \phi^{[\ell]}/h}$, 
$$\mathcal{Q}_{h,\A}(\tilde\chi_{\alpha,\rho,h}^{[\ell]}\psi) \geq (1-\eps)\mathcal{Q}_{h,b_{j}\A^{[0]}}(e^{i \phi^{[\ell]}/h}\tilde\chi_{\alpha,\rho,h}^{[\ell]}\psi) -C\eps^{-1}h^{4\rho}\|\tilde\chi_{\alpha,\rho,h}^{[\ell]}\psi\|_{\sL^2(\Omega)}^2$$
so that it follows, by using again \eqref{relative-control},
$$\mathcal{Q}_{h,\A}(\tilde\chi_{\alpha,\rho,h}^{[\ell]}\psi) \geq (1-\eps-C\eps^{-1}h^{4\rho-1})\mathcal{Q}_{h,b_{j}\A^{[0]}}(e^{i \phi^{[\ell]}/h}\tilde\chi_{\alpha,\rho,h}^{[\ell]}\psi).$$
We take $\eps= h^{2\rho-\frac{1}{2}}$ and we deduce
\begin{equation}\label{lb}
\mathcal{Q}_{h,\A}(\psi)\geq (1-Dh^{1-2\alpha}-Ch^{2\rho-\frac{1}{2}})\sum_{\ell}b^{2/p}_{\ell}h^2h^{-2/p}\lambda^{[0]}( p)\|\tilde\chi_{\alpha,\rho,h}^{[\ell]}\psi\|_{\sL^p(\Omega)}^2
\end{equation}
so that
$$\mathcal{Q}_{h,\A}(\psi)\geq (1-Dh^{1-2\alpha}-Ch^{2\rho-\frac{1}{2}})b^{2/p}_{0}h^2h^{-2/p}\lambda^{[0]}( p)\sum_{\ell}\|\tilde\chi_{\alpha,\rho,h}^{[\ell]}\psi\|_{\sL^p(\Omega)}^2$$
Since $p\geq 2$, we have
$$\sum_{\ell}\|\tilde\chi_{\alpha,\rho,h}^{[\ell]}\psi\|_{\sL^p(\Omega)}^2\geq \left(\sum_{\ell}\int_{\Omega} |\tilde\chi_{\alpha,\rho,h}^{[\ell]}\psi|^p \dx x\right)^{\frac{2}{p}}.$$
Using Lemma \ref{Lp-partition}, we infer
\begin{equation}\label{recol-Lp}
\sum_{\ell}\|\tilde\chi_{\alpha,\rho,h}^{[\ell]}\psi\|_{\sL^p(\Omega)}^2 \geq (1-\tilde Ch^{\alpha-\rho})\|\psi\|^2_{\sL^p(\Omega)}.
\end{equation}
Finally, we get
$$\mathcal{Q}_{h,\A}(\psi)\geq (1-Dh^{1-2\alpha}-Ch^{2\rho-\frac{1}{2}})(1-\tilde Ch^{\alpha-\rho})b^{2/p}_{0}h^2h^{-2/p}\lambda^{[0]}(p )\|\psi\|^2_{\sL^p(\Omega)}.$$
Optimizing the remainders, we choose $1-2\alpha=2\rho-\frac{1}{2}=\alpha-\rho$ so that $\rho=\frac{5}{16}$ and $\alpha=\frac{7}{16}$ and
\begin{align}\label{eq:Lower_Theorem1.4}
\mathcal{Q}_{h,\A}(\psi)\geq (1-Ch^{\frac{1}{8}})b^{2/p}_{0}h^2h^{-2/p}\lambda^{[0]}( p)\|\psi\|^2_{\sL^p(\Omega)}.
\end{align}

This gives the lower bound needed for Theorem~\ref{theo1}. Combined with \eqref{eq:Upper_Theorem1.4}, \eqref{eq:Lower_Theorem1.4} yields the proof of Theorem~\ref{theo1}.

\subsubsection{A direct application to the localization}
The following proposition provides a rough (but quantitative) localization in $\sL^p$-norm of the minimizers near the minimum of the magnetic field.
\begin{proposition}\label{rough-loc}
Let us assume that $\B_{|\overline{\Omega}}$ admits a unique minimum attained at $0\in\Omega$. For all $\eps>0$, there exist $h_{0}>0$, $C>0$ such that for all $h\in(0,h_{0})$ and $\psi$ minimizer,
\begin{equation}\label{loc-Lp}
\|\psi\|_{\sL^p(\complement D(0,2\eps))}\leq Ch^{\frac{1}{8p}}\|\psi\|_{\sL^p(\Omega)}.
\end{equation}
In the case when the minimum is non-degenerate, this can be improved to
$$\|\psi\|_{\sL^p(\complement D(0,2h^{\tilde\rho}))}\leq Ch^{\left(\frac{1}{8}-2\tilde\rho\right)\frac{1}{p}}\|\psi\|_{\sL^p(\Omega)},$$
where $\tilde\rho<\frac{1}{16}$.
\end{proposition}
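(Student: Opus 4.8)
The plan is to combine the lower bound \eqref{eq:Lower_Theorem1.4} (or rather the refined local inequality \eqref{lb}) with the matching upper bound \eqref{eq:Upper_Theorem1.4}, exploiting the strict inequality $b_0 < \B(\x)$ away from the well. Let $\psi$ be a minimizer, so that $\mathcal{Q}_{h,\A}(\psi) = \lambda(\Omega,\A,p,h) \|\psi\|_{\sL^p(\Omega)}^2$, and fix $\eps>0$. First I would revisit the proof of the lower bound in Theorem~\ref{theo1}: the key estimate \eqref{lb} states, before summing carelessly, that
$$\mathcal{Q}_{h,\A}(\psi)\geq (1-Ch^{1/8})\,h^2h^{-2/p}\lambda^{[0]}(p)\sum_{\ell} b_{\ell}^{2/p}\,\|\tilde\chi_{\alpha,\rho,h}^{[\ell]}\psi\|_{\sL^p(\Omega)}^2,$$
where $b_\ell$ is (essentially) the infimum of $\B$ on the support of $\tilde\chi_{\alpha,\rho,h}^{[\ell]}$. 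Splitting the sum into cells whose support meets $D(0,\eps)$ and cells contained in $\complement D(0,\eps)$, and using $b_\ell \geq b_0$ for the former and $b_\ell \geq b_0 + \delta_\eps$ for the latter (with $\delta_\eps>0$ by uniqueness of the minimum and continuity, once $h$ is small enough that the cell diameter $\sim h^\rho$ is $\ll \eps$), I obtain
$$\mathcal{Q}_{h,\A}(\psi)\geq (1-Ch^{1/8})\,h^2h^{-2/p}\lambda^{[0]}(p)\Big( b_0^{2/p}\sum_{\ell}\|\cdots\|_{\sL^p}^2 + c_\eps \sum_{\ell: \mathrm{supp}\subset \complement D(0,\eps)}\|\cdots\|_{\sL^p}^2\Big),$$
with $c_\eps = (b_0+\delta_\eps)^{2/p}-b_0^{2/p}>0$.

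Next I would reassemble the $\sL^p$ masses: by the superadditivity $\sum_\ell \|\tilde\chi^{[\ell]}\psi\|_{\sL^p}^2 \geq (\sum_\ell \int |\tilde\chi^{[\ell]}\psi|^p)^{2/p}$ and Lemma~\ref{Lp-partition} the first bracket is bounded below by $(1-Ch^{\alpha-\rho})\|\psi\|_{\sL^p(\Omega)}^2$. For the second bracket I use that the cells contained in $\complement D(0,\eps)$ cover $\complement D(0,2\eps)$ for $h$ small, so $\sum_{\ell:\,\mathrm{supp}\subset\complement D(0,\eps)}\int |\tilde\chi^{[\ell]}\psi|^p \geq \int_{\complement D(0,2\eps)}|\psi|^p = \|\psi\|_{\sL^p(\complement D(0,2\eps))}^p$, hence that bracket is $\geq \|\psi\|_{\sL^p(\complement D(0,2\eps))}^2$ after raising to the power $2/p$. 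Normalizing $\|\psi\|_{\sL^p(\Omega)}=1$ and inserting the upper bound $\mathcal{Q}_{h,\A}(\psi) = \lambda(\Omega,\A,p,h) \leq (1+Ch^{1/2}) b_0^{2/p}h^2h^{-2/p}\lambda^{[0]}(p)$ gives
$$(1+Ch^{1/2})b_0^{2/p} \geq (1-Ch^{1/8})\big( b_0^{2/p}(1-Ch^{\alpha-\rho}) + c_\eps \|\psi\|_{\sL^p(\complement D(0,2\eps))}^2\big),$$
and rearranging yields $\|\psi\|_{\sL^p(\complement D(0,2\eps))}^2 \leq C_\eps h^{1/8}$, i.e. \eqref{loc-Lp} with exponent $h^{1/8p}$ (the constant depending on $\eps$ through $c_\eps^{-1}$). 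For the non-degenerate case one replaces the fixed radius $\eps$ by $h^{\tilde\rho}$: there $\B(\x)-b_0 \gtrsim |\x|^2 \gtrsim h^{2\tilde\rho}$ on $\complement D(0,h^{\tilde\rho})$, so $c_\eps$ is replaced by $c\, h^{2\tilde\rho}$, and one needs $1/8 - 2\tilde\rho>0$, i.e. $\tilde\rho<1/16$, together with $\alpha-\rho - 2\tilde\rho >0$ and $\rho>\tilde\rho$ so that the cells still resolve the ball of radius $h^{\tilde\rho}$; tracking these constraints gives the stated exponent $(\tfrac18-2\tilde\rho)\tfrac1p$.

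The main obstacle is bookkeeping rather than conceptual: one must be careful that the partition cells genuinely separate $D(0,\eps)$ from $\complement D(0,2\eps)$ — this needs $h^\rho \ll \eps$ (resp. $h^\rho \ll h^{\tilde\rho}$, forcing $\rho>\tilde\rho$) — and that the gain $c_\eps$ (resp. $c\,h^{2\tilde\rho}$) is not swallowed by the $Ch^{1/8}$ and $Ch^{\alpha-\rho}$ error terms already present in the lower bound; in the degenerate case this is automatic, while in the non-degenerate case it is exactly the source of the constraint $\tilde\rho<1/16$. A minor point to check is that $b_\ell \geq b_0 + \delta_\eps$ uniformly, which follows from compactness of $\complement D(0,\eps)\cap\overline{\Omega}$ and the uniqueness of the minimum.
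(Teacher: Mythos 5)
Your approach is essentially the paper's: split the cells in \eqref{lb} into near-well and far cells, use $b_\ell^{2/p}-b_0^{2/p}\geq c_\eps>0$ for the far cells, and compare against the upper bound of Theorem~\ref{theo1} after reassembling the local $\sL^p$-norms. However, there is one step that fails as written. You claim
$$\sum_{\ell:\,\mathrm{supp}\subset\complement D(0,\eps)}\int_{\Omega} |\tilde\chi_{\alpha,\rho,h}^{[\ell]}\psi|^p\dx\x\;\geq\;\int_{\complement D(0,2\eps)}|\psi|^p\dx\x,$$
on the grounds that the far cells cover $\complement D(0,2\eps)$. But the partition satisfies $\sum_\ell|\tilde\chi^{[\ell]}|^2=1$, not $\sum_\ell|\tilde\chi^{[\ell]}|^p=1$. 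On the transition strips of the grid one has $\sum_\ell|\tilde\chi^{[\ell]}|^p<\sum_\ell|\tilde\chi^{[\ell]}|^2=1$ strictly (since $0\leq\tilde\chi^{[\ell]}<1$ there and $p>2$), so the pointwise inequality $\sum_{\mathrm{far}}|\tilde\chi^{[\ell]}|^p\geq\mathds{1}_{\complement D(0,2\eps)}$ is false and the displayed inequality does not hold. Notice that you already invoked Lemma~\ref{Lp-partition} to handle this very discrepancy for the \emph{total} sum, so it is not merely a cosmetic slip; it is a genuine missing step for the restricted sum over the far cells.

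The fix is exactly the one the paper carries out in display \eqref{eq:4.12}: write
$$\sum_{\ell\notin K_1}\int|\tilde\chi^{[\ell]}\psi|^p=\sum_{\ell\notin K_1}\int|\tilde\chi^{[\ell]}|^2|\psi|^p+\sum_{\ell\notin K_1}\int\bigl(|\tilde\chi^{[\ell]}|^p-|\tilde\chi^{[\ell]}|^2\bigr)|\psi|^p,$$
and control the second term by the same translation argument behind Lemma~\ref{Lp-partition}, producing an extra error $Ch^{\alpha-\rho}\|\psi\|_{\sL^p(\Omega)}^p=Ch^{1/8}\|\psi\|_{\sL^p(\Omega)}^p$ that is absorbed into the final bound (since $p\geq 2$ implies $h^{p/16}\leq h^{1/8}$). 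With that correction your rearrangement goes through and gives exactly \eqref{loc-Lp}; likewise for the $h^{\tilde\rho}$ version, where your estimate $c_\eps\sim h^{2\tilde\rho}$ and the constraints you list are the right ones. Apart from this overlooked $|\chi|^p$-versus-$|\chi|^2$ correction, the argument is the same as the paper's, just organized as a single rearranged inequality rather than the paper's sequence \eqref{eq:4.9}--\eqref{eq:4.12}.
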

\begin{proof}
We apply \eqref{lb} to a minimizer $\psi$ and we get, with choices of $\rho$ and $\alpha$ given in the previous section:
$$\lambda(\Omega,\A,p,h)\|\psi\|^2_{\sL^p(\Omega)}\geq (1-Ch^{\frac{1}{8}})\sum_{\ell}b^{2/p}_{\ell}h^2h^{-2/p}\lambda^{[0]}( p)\|\tilde\chi_{\alpha,\rho,h}^{[\ell]}\psi\|_{\sL^p(\Omega)}^2.$$
With the upper bound of Theorem \ref{theo1} and \eqref{recol-Lp}, we get
\begin{align}\label{eq:4.9}
\sum_{\ell}\left\{b^{2/p}_{\ell}-b^{2/p}_{0}\right\}\|\tilde\chi_{\alpha,\rho,h}^{[\ell]}\psi\|_{\sL^p(\Omega)}^2\leq Ch^{\frac{1}{8}}\|\psi\|_{\sL^p(\Omega)}^2.
\end{align}
Let us introduce $K_{1}(h,\eps)=\{\ell : \x_{\ell}\in D(0,\eps)\}$. From \eqref{eq:4.9} and the uniqueness of the minimum, we have (for some $\eta>0$ and all $h$ sufficiently small), 
\begin{align}\label{eq:4.10}
\eta \sum_{\ell\notin K_{1}(h,\eps)} \|\tilde\chi_{\alpha,\rho,h}^{[\ell]}\psi\|_{\sL^p(\Omega)}^2\leq Ch^{\frac{1}{8}}\|\psi\|_{\sL^p(\Omega)}^2,
\end{align}
and (by concavity),
$$\sum_{\ell\notin K_{1}(h,\eps)} \|\tilde\chi_{\alpha,\rho,h}^{[\ell]}\psi\|_{\sL^p(\Omega)}^2\geq \left(\sum_{\ell\notin K_{1}(h,\eps)}\int_{\Omega}|\tilde\chi_{\alpha,\rho,h}^{[\ell]}\psi|^p\dx\x\right)^{\frac{2}{p}}.$$
so that
\begin{align}\label{eq:4.11}
\sum_{\ell\notin K_{1}(h,\eps)}\int_{\Omega}|\tilde\chi_{\alpha,\rho,h}^{[\ell]}\psi|^p\dx\x\leq Ch^{\frac{p}{16}}\|\psi\|_{\sL^p(\Omega)}^p.
\end{align}
We get, using Lemma~\ref{Lp-partition} in the last step,
\begin{align}\label{eq:4.12}
&\sum_{\ell\notin K_{1}(h,\eps)}\int_{\Omega}|\tilde\chi_{\alpha,\rho,h}^{[\ell]}\psi|^p\dx\x \nonumber\\
&=\sum_{\ell\notin K_{1}(h,\eps)}\int_{\Omega}|\tilde\chi_{\alpha,\rho,h}^{[\ell]}|^2|\psi|^p\dx\x+\sum_{\ell\notin K_{1}(h,\eps)}\int_{\Omega}(|\tilde\chi_{\alpha,\rho,h}^{[\ell]}|^p|\psi|^p-|\tilde\chi_{\alpha,\rho,h}^{[\ell]}|^2|\psi|^p)\dx\x \nonumber\\
&\geq\sum_{\ell\notin K_{1}(h,\eps)}\int_{\Omega}|\tilde\chi_{\alpha,\rho,h}^{[\ell]}|^2|\psi|^p\dx\x-Ch^{\frac{1}{8}}\|\psi\|_{\sL^p(\Omega)}^p.
\end{align}
We infer \eqref{loc-Lp} upon inserting \eqref{eq:4.12} in \eqref{eq:4.11}.

If the minimum of $\B$ is non-degenerate, i.e. the Hessian of $\B$ is strictly positive, \eqref{eq:4.10} improves to
$$\sum_{\ell\notin K_{1}(h,h^{\tilde\rho})} \|\tilde\chi_{\alpha,\rho,h}^{[\ell]}\psi\|_{\sL^p(\Omega)}^2\leq Ch^{\frac{1}{8}-\frac{4\tilde\rho}{p}}\|\psi\|_{\sL^p(\Omega)}^2,$$
with $\tilde\rho< \rho =\frac{5}{16}$ and we get the desired improvement by the same arguments. 
\end{proof}

\subsubsection{Proof of Theorem \ref{loc}}
By Proposition \ref{rough-loc}, there exists $\gamma>0$ such that 
\begin{align}\label{eq:Loc-L-p}
\|\psi\|^p_{\sL^p(\complement D(0,2\eps))}\leq Ch^{\gamma}\|\psi\|^p_{\sL^p(\Omega)}.
\end{align}
We assume that $\|\psi\|_{\sL^p(\Omega)}=1$. Then, we have the following estimate of the non-linear electric potential, for $v\in\Dom(\mathcal{Q}_{h,\A})$ and supported away from the ball $D(0,2\eps)$,
\begin{multline*}
\lambda(\Omega,\A,p,h)\int_{\Omega} |\psi|^{p-2}|v|^2\dx \x\leq \lambda(\Omega,\A,p,h)\|v\|_{\sL^p}^2\left(\int_{\complement D(0,2\eps)}|\psi|^p\dx\x\right)^{\frac{p-2}{p}}\\\leq C\lambda(\Omega,\A,p,h) h^{\gamma\frac{p-2}{p}}\|v\|_{\sL^p(\Omega)}^2,
\end{multline*}
where we used the H\"older inequality and \eqref{eq:Loc-L-p}. We now apply \eqref{re-Sob} to the extension by $0$ of $v$ to get
$$\|v\|^2_{\sL^p(\R^2)}\leq C(\eps^{1-\frac{p}{2}}\|v\|^2_{\sL^2(\R^2)}+\eps\|\nabla |v|\|^2_{\sL^2(\R^2)})$$
and the diamagnetic inequality implies
$$\|v\|^2_{\sL^p(\Omega)}\leq C(\eps^{1-\frac{p}{2}}\|v\|^2_{\sL^2(\Omega)}+\eps h^{-2}\mathcal{Q}_{h,\A}(v))$$
so that
$$\lambda(\Omega,\A,p,h)\int_{\Omega} |\psi|^{p-2}|v|^2\dx \x\leq  Ch^{2-\frac{2}{p}} h^{\gamma\frac{p-2}{p}}(\eps^{1-\frac{p}{2}}\|v\|^2_{\sL^2(\Omega)}+\eps h^{-2}\mathcal{Q}_{h,\A}(v)).$$
Let us now choose an appropriate $\eps$. We would like to have
$$h^{1-\frac{2}{p}} h^{\gamma\frac{p-2}{p}}\eps^{1-\frac{p}{2}}\ll 1,\quad h^{2-\frac{2}{p}} h^{\gamma\frac{p-2}{p}}\eps h^{-2}\ll 1$$
We choose $\eps=\delta h^{\frac{2}{p}+\gamma\left(-1+\frac{2}{p}\right)}$ with $\delta>0$ arbitrarily small. We get
$$h^{1-\frac{2}{p}} h^{\gamma\frac{p-2}{p}}\eps^{1-\frac{p}{2}}=\delta^{1-\frac{p}{2}}h^{1-\frac{2}{p}+\gamma\frac{p-2}{p}+\left(1-\frac{p}{2}\right)\left\{\frac{2}{p}+\gamma\left(-1+\frac{2}{p}\right)\right\}}=\delta^{1-\frac{p}{2}}h^{\frac{\gamma}{2}(p-2)}.$$
We could even choose a very small power of $h$ for $\delta$. We infer that, for $v$ in the domain of the magnetic Laplacian and supported in $\complement D(0,2\eps)$,
\begin{equation}\label{relative-bound}
\lambda(\Omega,\A,p,h)\int_{\Omega} |\psi|^{p-2}|v|^2\dx \x\leq C\left(h\delta^{1-\frac{p}{2}}h^{\frac{\gamma}{2}(p-2)}\|v\|_{\sL^2(\Omega)}^2+\delta\mathcal{Q}_{h,\A}(v)\right).
\end{equation}
Let us now establish our Agmon estimates. We recall the equation
$$\mathcal{L}_{h,\A,V_{h}}\psi=(-ih\nabla+\A)^2\psi+V_{h}\psi=0,\qquad\mbox{ with }\quad V_{h}=-\lambda(\Omega,\A,p,h)|\psi|^{p-2}.$$
We consider a smooth cutoff function $\chi$ such that $\chi=0$ in $D(0,2\eps)$, $0\leq \chi\leq 1$ and $\chi = 1$ on $\complement D(0,4\eps)$. We write the IMS formula and get
$$\mathcal{Q}_{h,\A,V_{h}}(e^{\chi h^{-\rho}|\x|}\psi)-Ch^{2-2\rho}\|e^{\chi h^{-\rho}|\x|}\psi\|_{\sL^2(\Omega)}^2\leq 0.$$
Then, we introduce a quadratic partition of unity
$$\chi_{1}^2+\chi_{2}^2=1,$$
such that $\chi_{2}$ is supported in $\complement D(0,2\eps)$. With the \enquote{IMS} formula, we deduce
\begin{multline*}
\mathcal{Q}_{h,\A,V_{h}}(\chi_{1}e^{\chi h^{-\rho}|\x|}\psi)+\mathcal{Q}_{h,\A,V_{h}}(\chi_{2}e^{\chi h^{-\rho}|\x|}\psi)\\
-\tilde Ch^{2-2\rho}\|\chi_{1}e^{\chi h^{-\rho}|\x|}\psi\|_{\sL^2(\Omega)}^2-\tilde Ch^{2-2\rho}\|\chi_{2}e^{\chi h^{-\rho}|\x|}\psi\|_{\sL^2(\Omega)}^2\leq 0.
\end{multline*}
Then, with \eqref{relative-bound}, we have
\begin{multline*}
\mathcal{Q}_{h,\A,V_{h}}(\chi_{2}e^{\chi h^{-\rho}|\x|}\psi)-\tilde Ch^{2-2\rho}\|\chi_{2}e^{\chi h^{-\rho}|\x|}\psi\|_{\sL^2(\Omega)}^2\\
\geq (1-C\delta)\mathcal{Q}_{h,\A}(\chi_{2}e^{\chi h^{-\rho}|\x|}\psi)-h\delta^{1-\frac{p}{2}}h^{\frac{\gamma}{2}(p-2)}\|\chi_{2}e^{\chi h^{-\rho}|\x|}\psi\|_{\sL^2(\Omega)}^2
\end{multline*}
Let us recall that
$$\mathcal{Q}_{h,\A}(\chi_{2}e^{\chi h^{-\rho}|\x|}\psi)\geq hb_{0}\|\chi_{2}e^{\chi h^{-\rho}|\x|}\psi\|_{\sL^2(\Omega)}^2$$
and we deduce
\begin{multline*}
\mathcal{Q}_{h,\A,V_{h}}(\chi_{2}e^{\chi h^{-\rho}|\x|}\psi)-\tilde Ch^{2-2\rho}\|\chi_{2}e^{\chi h^{-\rho}|\x|}\psi\|_{\sL^2(\Omega)}^2\\
\geq \left((1-C\delta)hb_{0}-\tilde Ch^{2-2\rho}\right)\|\chi_{2}e^{\chi h^{-\rho}|\x|}\psi\|_{\sL^2(\Omega)}^2\geq \eta h\|\chi_{2}e^{\chi h^{-\rho}|\x|}\psi\|_{\sL^2(\Omega)}^2,
\end{multline*}
as soon as $\rho\in\left(0,\frac{1}{2}\right)$ and for $h$ small enough. By support considerations, we have
$$\mathcal{Q}_{h,\A,V_{h}}(\chi_{1}e^{\chi h^{-\rho}|\x|}\psi)-\tilde Ch^{2-2\rho}\|\chi_{1}e^{\chi h^{-\rho}|\x|}\psi\|_{\sL^2(\Omega)}^2=\mathcal{Q}_{h,\A,V_{h}}(\chi_{1}\psi)-\tilde Ch^{2-2\rho}\|\chi_{1}\psi\|_{\sL^2(\Omega)}^2$$
and then
$$\mathcal{Q}_{h,\A,V_{h}}(\chi_{1}\psi)-\tilde Ch^{2-2\rho}\|\chi_{1}\psi\|_{\sL^2(\Omega)}^2\geq \frac{1}{2}\mathcal{Q}_{h,\A}(\chi_{1}\psi) -Ch^{2-\frac{2}{p}}\int_{\Omega} |\psi|^{p-2}\left|\chi_{1}\psi\right|^2\dx\x.$$
Moreover, since $\psi$ is $\sL^p$-normalized, we have
$$\int_{\Omega} |\psi|^{p-2}\left|\chi_{1}\psi\right|^2\dx\x\leq C\left\|\chi_{1}\psi\right\|_{\sL^p(\Omega)}^2.$$
By using again the rescaled Sobolev embedding ($\eps=\delta h^2$, with $\delta$ small enough) and the diamagnetic inequality, we infer
$$\left\|\chi_{1}\psi\right\|_{\sL^p(\Omega)}^2\leq C\left((\delta h^2)^{1-\frac{p}{2}}\|\chi_{1}\psi\|^2_{\sL^2(\Omega)}+\delta\mathcal{Q}_{h,\A}(\chi_{1}\psi)\right).$$
Therefore, it follows that
$$\eta h\|\chi_{2}e^{\chi h^{-\rho}|\x|}\psi\|_{\sL^2(\Omega)}^2\leq C(\delta h^2)^{1-\frac{p}{2}}\|\psi\|^2_{\sL^2(\Omega)}.$$
and thus
$$\|e^{\chi h^{-\rho}|\x|}\psi\|_{\sL^2(\Omega)}^2\leq \tilde Ch^{1-p}\|\psi\|^2_{\sL^2(\Omega)}.$$
With the previous analysis, we also infer that
$$\mathcal{Q}_{h,\A}(\chi_{2}e^{\chi h^{-\rho}|\x|}\psi)\leq Ch^{-\gamma}\|\psi\|^2_{\sL^2(\Omega)},$$
for some $\gamma>0$. 
With the same kind of arguments, we have
$$\mathcal{Q}_{h,\A}(\chi_{1}e^{\chi h^{-\rho}|\x|}\psi)\leq C\mathcal{Q}_{h,\A,V_{h}}(\chi_{1}e^{\chi h^{-\rho}|\x|}\psi)+Ch^{-\gamma}\|\psi\|^2_{\sL^2(\Omega)}\leq\tilde Ch^{-\gamma}\|\psi\|^2_{\sL^2(\Omega)}.$$
With the \enquote{IMS} formula, we find
$$\mathcal{Q}_{h,\A}(e^{\chi h^{-\rho}|\x|}\psi)\leq Ch^{-\gamma}\|\psi\|^2_{\sL^2(\Omega)}.$$
Since $\Omega$ is bounded, $\A$ is regular and changing $\gamma$, we get
$$\|\nabla\left(e^{\chi h^{-\rho}|\x|}\psi\right)\|_{\sL^2(\Omega)}^2\leq Ch^{-\gamma}\|\psi\|^2_{\sL^2(\Omega)}$$
and then,
$$\|e^{\chi h^{-\rho}|\x|}\psi\|^2_{\sH^1(\Omega)}\leq Ch^{-\gamma}\|\psi\|^2_{\sL^2(\Omega)},\quad \|e^{\chi h^{-\rho}|\x|}\psi\|^2_{\sL^q(\Omega)}\leq Ch^{-\gamma}\|\psi\|^2_{\sL^2(\Omega)}\leq \tilde Ch^{-\gamma}\|\psi\|^2_{\sL^q(\Omega)},$$
the second inequality coming from the Sobolev embedding for all $q\geq 2$ and the H\"older inequality ($\Omega$ is bounded). Finally, using the equation satisfied by $\psi$, we infer that 
$$\|e^{\chi h^{-\rho}|\x|}\psi\|^2_{\sH^2(\Omega)}\leq Ch^{-\gamma}\|\psi\|^2_{\sL^2(\Omega)}$$
and thus
$$\|e^{\chi h^{-\rho}|\x|}\psi\|^2_{\sL^\infty(\Omega)}\leq Ch^{-\gamma}\|\psi\|^2_{\sL^2(\Omega)}.$$
This finishes the proof of Theorem~\ref{loc}.

\subsection{Lower bound: vanishing magnetic field}
This section is devoted to the proof of the lower bound in Theorem \ref{theo2}. Let us first state a convenient lemma.
\begin{lemma}\label{min-param}
For $b, c_{1}, c_{2}\in\R$, with $c_{2}\neq 0$, let us introduce
$$A_{b,c_{1}, c_{2}}(s,t)=-b t+c_{1}st+\frac{c_{2}}{2}t^2,\qquad \A_{b, c_{1}, c_{2}}=(A_{b,c_{1}, c_{2}},0).$$
For $p\geq2$, we consider
$$\mu(b, c_{1}, c_{2},p)=\inf_{\psi\in\sH_{\A}^1, \psi\neq 0}\frac{\int_{\R^2} |D_{t}\psi|^2+|(D_{s}+A_{b,c_{1}, c_{2}}(s,t))\psi|^2\dx s\dx t}{\|\psi\|^2_{\sL^p(\R^2)}}.$$
Then, we have (with $\| c \|$ being the Euclidean norm of $(c_1,c_2)$)
$$\mu(b, c_{1}, c_{2},p)=\mu(0, c_{1}, c_{2},p)=\|c\|^{\frac{4}{3p}}\lambda^{[1]}(p ).$$
Moreover, for $p>2$, the infimum is a minimum.
\end{lemma}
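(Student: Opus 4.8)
Throughout write $Q_{\A}(\psi)=\int_{\R^2}|(-i\nabla+\A)\psi|^2\dx s\dx t$ for a smooth vector potential $\A=(A_1,A_2)$ on $\R^2$, so that the numerator in the definition of $\mu(b,c_1,c_2,p)$ is exactly $Q_{\A_{b,c_1,c_2}}(\psi)$ and $\mu(b,c_1,c_2,p)=\lambda(\R^2,\A_{b,c_1,c_2},p,1)$. The plan is to exploit that this optimal Sobolev constant only depends on the associated magnetic field $\B_{b,c_1,c_2}(s,t)=-b+c_1 s+c_2 t$ up to the natural symmetries. Since $c_2\neq 0$ we have $\nabla\B_{b,c_1,c_2}=(c_1,c_2)\neq 0$, so $\B_{b,c_1,c_2}$ is an affine function whose zero set is a straight line $L$ and whose gradient has norm $\|c\|$. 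The three facts I will use repeatedly are that $\lambda(\R^2,\cdot,p,1)$ is invariant under (i) composition with an orientation-preserving affine isometry of $\R^2$, (ii) a smooth gauge transformation $\psi\mapsto e^{i\phi}\psi$, and (iii) complex conjugation $\psi\mapsto\bar\psi$ (which turns $\A$ into $-\A$, hence $\B$ into $-\B$): each of these is a bijection of the relevant form domain preserving both $Q_{\A}$ and $\|\cdot\|_{\sL^p}$, hence the whole Rayleigh quotient. Under a dilation the quotient scales homogeneously, and computing that homogeneity is what produces the exponent $4/(3p)$.

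\emph{Reduction to the model $\A^{[1]}$.} I would pick an orientation-preserving affine isometry $g(\x)=R\x+a$, $R\in\mathrm{SO}(2)$, that maps the line $\{y_1=0\}$ onto $L$ and the first basis vector onto the unit normal $(c_1,c_2)/\|c\|$; then $\B_{b,c_1,c_2}\circ g\,(y_1,y_2)=\|c\|\,y_1$. With $\tilde\psi=\psi\circ g$ and $\hat\A=R^{\top}(\A_{b,c_1,c_2}\circ g)$, a change of variables gives $Q_{\A_{b,c_1,c_2}}(\psi)=Q_{\hat\A}(\tilde\psi)$ and $\|\psi\|_{\sL^p}=\|\tilde\psi\|_{\sL^p}$, while the curl being preserved by $\mathrm{SO}(2)$ means $\hat\A$ has magnetic field $\|c\|\,y_1$, the same field as $-\|c\|\A^{[1]}$. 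Because $\R^2$ is simply connected, $\hat\A+\|c\|\A^{[1]}=\nabla\phi$ for some polynomial $\phi$, so the gauge transformation $e^{i\phi}$ identifies $Q_{\hat\A}$ with $Q_{-\|c\|\A^{[1]}}$, and complex conjugation identifies the latter with $Q_{\|c\|\A^{[1]}}$. Each of these maps preserves the form domain and the Rayleigh quotient, so
\[
\mu(b,c_1,c_2,p)=\lambda(\R^2,\A_{b,c_1,c_2},p,1)=\lambda(\R^2,\|c\|\A^{[1]},p,1).
\]
In particular the right-hand side does not depend on $b$, which already gives $\mu(b,c_1,c_2,p)=\mu(0,c_1,c_2,p)$.

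\emph{The dilation.} For $\gamma>0$ I would substitute $\psi(x,y)=\phi(\gamma^{1/3}x,\gamma^{1/3}y)$; a short computation shows $Q_{\gamma\A^{[1]}}(\psi)=Q_{\A^{[1]}}(\phi)$, the isotropic exponent $1/3$ being forced precisely so that the two summands of $Q_{\A^{[1]}}$ rescale by the same factor while the quadratic term $\tfrac{1}{2}x^2$ absorbs the factor $\gamma$. Since $\|\psi\|_{\sL^p}^2=\gamma^{-4/(3p)}\|\phi\|_{\sL^p}^2$, passing to the infimum of the Rayleigh quotient yields $\lambda(\R^2,\gamma\A^{[1]},p,1)=\gamma^{4/(3p)}\lambda^{[1]}(p)$. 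Taking $\gamma=\|c\|$ and combining with the previous paragraph proves $\mu(b,c_1,c_2,p)=\|c\|^{4/(3p)}\lambda^{[1]}(p)$.

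\emph{Attainment for $p>2$.} By Proposition~\ref{Agmon-k} applied with $k=1$, the infimum defining $\lambda^{[1]}(p)$ is attained by some $w$; transporting $w$ back through the dilation, the conjugation, the gauge transformation and the isometry of the two previous paragraphs --- each a bijection of the respective form domain preserving the Rayleigh quotient --- produces a minimizer for $\mu(b,c_1,c_2,p)$, so this infimum is a minimum as well. I do not expect a genuine obstacle: the analytic content sits entirely in Proposition~\ref{Agmon-k}, and the only thing requiring care is the clean bookkeeping of the gauges and form domains together with getting the homogeneity exponent $4/(3p)$ right.
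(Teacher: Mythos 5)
Your proof is correct and follows essentially the same path as the paper's terse sketch: translate (the paper completes the square in $t$), change gauge, rotate, and rescale, which is exactly the sequence of Rayleigh-quotient--preserving transformations you carry out. The complex-conjugation step you add to fix a sign could instead be absorbed into the choice of rotation, and you supply the homogeneity computation giving $\|c\|^{4/(3p)}$ and the transport of the minimizer from Proposition~\ref{Agmon-k} that the paper leaves implicit.
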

\begin{proof}
It is enough to observe that
$$A_{b,c_{1}, c_{2}}(s,t)=c_{1}st+\frac{c_{2}}{2}\left(t-\frac{b}{c_{2}}\right)^2-\frac{b^2}{2c_{2}}$$
and to consider the translation $\tau=t-\frac{b}{c_{2}}$. Then, a change of gauge, a rotation and a rescaling provide the conclusion.
\end{proof}

\begin{proof}[Proof of the lower bound in Theorem \ref{theo2}]
We can now prove the lower bound announced in Theorem \ref{theo2}. Let us again consider $\psi\in\Dom(\mathcal{Q}_{h,\A})$ and use our partition of the unity \eqref{IMS-lb}. We denote by $(\x_{\ell})$ the centers of the balls and define $b_{\ell} = \B(\x_{\ell})$. We distinguish between the balls that are close to $\Gamma$ and the others by letting
\begin{align*}
&J_{1}(h)=\{\ell : \dist(\x_{\ell},\Gamma)\geq\eps_{0}\},\\
&J_{2}(h)=\{\ell : h^{\tilde\rho}<\dist(\x_{\ell},\Gamma)<\eps_{0}\},\\
&J_{3}(h)=\{\ell : \dist(\x_{\ell},\Gamma)\leq h^{\tilde\rho}\}.
\end{align*}
Here $\eps_0$ is chosen so small that the local coordinates $(s,t)$ introduced around \eqref{eq:locCOord} make sense in the regions covered by $J_2(h)$ and $J_3(h)$.

\subsubsection{Collecting the balls of the region $J_{1}(h)$}
Using that the magnetic field does not vanish in the region determined by $J_{1}(h)$ and using \eqref{HM}, we find first
\begin{multline*}
\sum_{\ell\in J_{1}(h)}\left\{\mathcal{Q}_{h,\A}(\tilde\chi_{\alpha,\rho,h}^{[\ell]}\psi) -Dh^{2-2\alpha}\|\tilde\chi_{\alpha,\rho,h}^{[\ell]}\psi\|^2_{\sL^2(\Omega)}\right\}\geq\sum_{\ell\in J_{1}(h)}(1-\tilde Dh^{1-2\alpha})\mathcal{Q}_{h,\A}(\tilde\chi_{\alpha,\rho,h}^{[\ell]}\psi),
\end{multline*}
and then, with the lower bound of Theorem \ref{theo1} (in the region where the magnetic field is bounded from below by a positive constant),
\begin{multline*}
\sum_{\ell\in J_{1}(h)}\left\{\mathcal{Q}_{h,\A}(\tilde\chi_{\alpha,\rho,h}^{[\ell]}\psi) -Dh^{2-2\alpha}\|\tilde\chi_{\alpha,\rho,h}^{[\ell]}\psi\|^2_{\sL^2(\Omega)}\right\}\\
\geq (1-\tilde Dh^{1-2\alpha})(1-Ch^{\frac{1}{8}})\lambda^{[0]}(p )b^{\frac{2}{p}}_{0}h^2h^{-\frac{2}{p}}\|\tilde\chi_{\alpha,\rho,h}^{[\ell]}\psi\|^2_{\sL^p(\Omega)},
\end{multline*}
where $b_0>0$ is the minimal value of the magnetic field strength in the region covered by balls from $J_1(h)$.

 \bigskip
 
In the regions determined by $J_{2}(h)$ and $J_{3}(h)$, we use the tubular coordinates near the zero line $\Gamma$. 
We recall the asymptotic expansion of the linear eigenvalue (see \cite{HelMo96, DomRay13}) under the assumptions of Theorem \ref{theo2}:
\begin{equation}\label{lin-asymptotics}
\lambda(\Omega, \A, 2,h)=\gamma_{0}^{\frac{2}{3}}\lambda^{[1]}(2)h^{\frac{4}{3}}+o(h^{\frac{4}{3}}).
\end{equation}
Therefore, since we have
$$\lambda(\Omega, \A, 2,h)\|\tilde\chi_{\alpha,\rho,h}^{[\ell]}\psi\|^2_{\sL^2(\Omega)}\leq \mathcal{Q}_{h,\A}(\tilde\chi_{\alpha,\rho,h}^{[\ell]}\psi)$$
we get, for $j\in\{2,3\}$,
\begin{equation}\label{min-Jj}
\sum_{\ell\in J_{j}(h)}\left\{\mathcal{Q}_{h,\A}(\tilde\chi_{\alpha,\rho,h}^{[\ell]}\psi) -Dh^{2-2\alpha}\|\tilde\chi_{\alpha,\rho,h}^{[\ell]}\psi\|^2_{\sL^2(\Omega)}\right\}\geq\sum_{\ell\in J_{j}(h)}(1-\tilde Dh^{\frac{2}{3}-2\alpha})\mathcal{Q}_{h,\A}(\tilde\chi_{\alpha,\rho,h}^{[\ell]}\psi).
\end{equation}

\subsubsection{Collecting the balls of the region $J_{2}(h)$}
By changing to the local coordinates introduced in \eqref{eq:locCOord} we find
\begin{align*}
\mathcal{Q}_{h,\A}(\tilde\chi_{\alpha,\rho,h}^{[\ell]}\psi)\geq
\frac{1}{2}\int |hD_{t}\widetilde{\psi_{\ell}}|^2+|(hD_{s}+A_{2,\ell}(s,t)+R_{3,\ell}(s,t))\widetilde{\psi_{\ell}}|^2 \dx s\dx t,
\end{align*}
where $\widetilde{\psi_{\ell}} = e^{i \varphi_{\ell}/h} (\tilde\chi_{\alpha,\rho,h}^{[\ell]}\psi)(\Phi(s,t))$, for some suitable local gauge transformation $\varphi_{\ell}$ and
with $$A_{2,\ell}(s,t)=-b_{\ell}(t-t_{\ell})+c_{1,\ell}(s-s_{\ell})(t-t_{\ell})+\frac{c_{2,\ell}}{2}(t-t_{\ell})^2$$
and where $R_{3,\ell}$ is Taylor remainder of order $3$ of $\tilde A$ at $(s_{\ell},t_{\ell})$.
Actually, at this point we could include the terms of order $2$ in the remainder, but since we will use the higher precision in the treatment of the $J_3(h)$-terms, we introduce the notation here.

By a Cauchy inequality and the support properties of $\tilde\chi_{\alpha,\rho,h}^{[\ell]}$,
\begin{align*}
\int &|hD_{t}\widetilde{\psi_{\ell}}|^2+|(hD_{s}+A_{2,\ell}(s,t)+R_{3,\ell}(s,t))\widetilde{\psi_{\ell}}|^2 \dx s\dx t \\
&\geq
(1-\eta) \int |hD_{t}\widetilde{\psi_{\ell}}|^2+|(hD_{s}-b_{\ell}t)\widetilde{\psi_{\ell}}|^2 \dx s\dx t - C \eta^{-1} h^{4\rho} \| \widetilde{\psi_{\ell}}\|^2.
\end{align*}
Notice that since $\ell \in J_2(h)$, $|b_{\ell}| \geq C h^{\tilde{\rho}}$. So we can estimate (using \eqref{eq:MagnUncertain})
\begin{align*}
\int |hD_{t}\widetilde{\psi_{\ell}}|^2+|(hD_{s}-b_{\ell}t)\widetilde{\psi_{\ell}}|^2 \dx s\dx t \geq
h b_{\ell} \| \widetilde{\psi_{\ell}}\|_{\sL^2}^2 \geq C h^{1+ \tilde{\rho}} \| \widetilde{\psi_{\ell}}\|_{\sL^2}^2.
\end{align*}
We deduce that
\begin{align*}
\mathcal{Q}_{h,\A}(\tilde\chi_{\alpha,\rho,h}^{[\ell]}\psi)\geq
\left(\frac{1}{2}(1-\eta)-C\eta^{-1}h^{4\rho-1-\tilde\rho}\right)\int |hD_{t}\widetilde{\psi_{\ell}}|^2+|(hD_{s}-b_{\ell}t)\widetilde{\psi_{\ell}}|^2 \dx s\dx t.
\end{align*}
Choosing $\eta=h^{2\rho-\frac{1}{2}-\frac{\tilde\rho}{2}}$, 
\begin{align*}
\mathcal{Q}_{h,\A}(\tilde\chi_{\alpha,\rho,h}^{[\ell]}\psi)\geq
\left(\frac{1}{2}-Ch^{2\rho-\frac{1}{2}-\frac{\tilde\rho}{2}}\right)\int |hD_{t}\widetilde{\psi_{\ell}}|^2+|(hD_{s}-b_{\ell}t)\widetilde{\psi_{\ell}}|^2 \dx s\dx t,
\end{align*}
so that, by using a scaling argument and the definition of $\lambda^{[1]}(p)$,
\begin{align*}
\mathcal{Q}_{h,\A}(\tilde\chi_{\alpha,\rho,h}^{[\ell]}\psi)&\geq\left(\frac{1}{2}-Ch^{2\rho-\frac{1}{2}-\frac{\tilde\rho}{2}}\right)b_{\ell}^{\frac{2}{p}} h^{2-\frac{2}{p}}\lambda^{[1]}(p)\|\widetilde{\psi_{\ell}}\|^2_{\sL^p}\\
&\geq \left(\frac{1}{2}-Ch^{2\rho-\frac{1}{2}-\frac{\tilde\rho}{2}}\right)h^{\frac{2\tilde\rho}{p}} h^{2-\frac{2}{p}}\lambda^{[1]}(p)\|\widetilde{\psi_{\ell}}\|^2_{\sL^p}.
\end{align*}
We deduce that, for $h$ sufficiently small, and using also \eqref{min-Jj},
\begin{align*}
\sum_{\ell\in J_{2}(h)}\left\{\mathcal{Q}_{h,\A}(\tilde\chi_{\alpha,\rho,h}^{[\ell]}\psi) -Dh^{2-2\alpha}\|\tilde\chi_{\alpha,\rho,h}^{[\ell]}\psi\|^2_{\sL^2(\Omega)}\right\}
\geq
\frac{1}{4}\lambda^{[1]}(p) h^{\frac{2\tilde\rho}{p}} h^{2-\frac{2}{p}}\sum_{\ell\in J_{2}(h)} \|\tilde\chi_{\alpha,\rho,h}^{[\ell]}\psi\|^2_{\sL^p}.
\end{align*}
\subsubsection{Collecting the balls of the region $J_{3}(h)$}
By changing to local coordinates as in the region $J_2(h)$,
\begin{multline*}
\mathcal{Q}_{h,\A}(\tilde\chi_{\alpha,\rho,h}^{[\ell]}\psi)\geq\\
(1-Ch^{\tilde\rho})\int |hD_{t}( \widetilde{\psi_{\ell}} )|^2+|(hD_{s}+A_{2,\ell}(s,t)+R_{3,\ell}(s,t))( \widetilde{\psi_{\ell}} )|^2 \dx s\dx t.
\end{multline*}
We notice that (for any $\eta>0$),
\begin{multline*}
\int |hD_{t}( \widetilde{\psi_{\ell}} )|^2+|(hD_{s}+A_{2,\ell}(s,t)+R_{3,\ell}(s,t))( \widetilde{\psi_{\ell}} )|^2 \dx s\dx t\\
\geq (1-\eta-C\eta^{-1}h^{6\rho-\frac{4}{3}})\int |hD_{t}( \widetilde{\psi_{\ell}} )|^2+|(hD_{s}+A_{2,\ell}(s,t))( \widetilde{\psi_{\ell}} )|^2 \dx s\dx t,
\end{multline*}
where we have used
$$\int |hD_{t}( \widetilde{\psi_{\ell}} )|^2+|(hD_{s}+A_{2,\ell}(s,t))( \widetilde{\psi_{\ell}} )|^2 \dx s\dx t\geq \lambda^{[1]}( 2)\|c_{\ell}\|^{\frac{2}{3}}h^{\frac{4}{3}}\| \widetilde{\psi_{\ell}} \|_{\sL^2}^2,$$
which itself comes from Lemma \ref{min-param} with $p=2$. Thus, choosing $\eta = h^{3\rho -\frac{2}{3}}$, and using again Lemma \ref{min-param}, we get
\begin{align*}
&\mathcal{Q}_{h,\A}(\tilde\chi_{\alpha,\rho,h}^{[\ell]}\psi)\\
&\geq(1-Ch^{\tilde\rho})(1-Ch^{3\rho-\frac{2}{3}})\int |hD_{t}( \widetilde{\psi_{\ell}} )|^2+|(hD_{s}+A_{2,\ell}(s,t))( \widetilde{\psi_{\ell}} )|^2 \dx s\dx t\\
&\geq (1-Ch^{\tilde\rho})(1-Ch^{3\rho-\frac{2}{3}})\|c_{\ell}\|^{\frac{4}{3p}}\lambda^{[1]}(p )\| \widetilde{\psi_{\ell}} \|^2_{\sL^p}\\
&\geq (1-\tilde Ch^{\tilde\rho})(1-Ch^{3\rho-\frac{2}{3}})\gamma_{0}^{\frac{4}{3p}}\lambda^{[1]}(p )\| \widetilde{\psi_{\ell}} \|^2_{\sL^p},
\end{align*}
where the last inequality will be justified below:
Notice that 
$$
c_{1,\ell} = \partial_{s} \tilde{\B}(s,t)|_{(s_{\ell}, t_{\ell})} = \nabla \B( \Phi(s_{\ell}, t_{\ell})) \cdot [ c'(s_{\ell}) + t_{\ell} \partial_s \n(s_{\ell})]
= {\mathcal O}(h^{\tilde{\rho}}),
$$
since $\nabla \B( \Phi(s_{\ell}, 0))  \cdot c'(s_{\ell})=0$. Similarly,
$$
c_{1,\ell} = \partial_{t} \tilde{\B}(s,t)|_{(s_{\ell}, t_{\ell})} = \nabla \B( \Phi(s_{\ell}, t_{\ell})) \cdot  \n(s_{\ell}) =  \partial_{\n,\Gamma}\B(x_{\ell})
+ {\mathcal O}(h^{\tilde{\rho}}).
$$
This gives the inequality.

It follows that
\begin{multline*}
\sum_{\ell\in J_{3}(h)}\left\{\mathcal{Q}_{h,\A}(\tilde\chi_{\alpha,\rho,h}^{[\ell]}\psi) -Dh^{2-2\alpha}\|\tilde\chi_{\alpha,\rho,h}^{[\ell]}\psi\|^2_{\sL^2(\Omega)}\right\}\\
\geq(1-\tilde Dh^{\frac{2}{3}-2\alpha})(1-\tilde Ch^{\tilde\rho})(1-Ch^{3\rho-\frac{2}{3}})\gamma_{0}^{\frac{4}{3p}}\lambda^{[1]}(p )h^{2-\frac{4}{3p}}\sum_{\ell\in J_{3}(h)}\|\tilde\chi_{\alpha,\rho,h}^{[\ell]}\psi\|^2_{\sL^p}.
\end{multline*}
\subsubsection{Optimization of the parameters}
The constraints on the different powers of $h$ are:
\begin{equation}\label{constraints}
0<\alpha<\frac{1}{3}, \quad 0<\tilde\rho<\min\left(2\rho-\frac{1}{2},\frac{1}{3},\rho\right),\quad \frac{2}{9}<\rho<\alpha.
\end{equation}
Under these constraints, the smallest term comes from the region determined by $J_{3}(h)$ and we find
\begin{multline*}
\mathcal{Q}_{h,\A}(\psi)\geq (1-\tilde Dh^{\frac{2}{3}-2\alpha})(1-\tilde Ch^{\tilde\rho})(1-Ch^{3\rho-\frac{2}{3}})\gamma_{0}^{\frac{4}{3p}}\lambda^{[1]}(p )h^{2-\frac{4}{3p}}\sum_{\ell}\|\tilde\chi_{\alpha,\rho,h}^{[\ell]}\psi\|^2_{\sL^p}.
\end{multline*}
With Lemma \ref{Lp-partition}, we deduce
\begin{multline*}
\mathcal{Q}_{h,\A}(\psi)\geq (1-\tilde Dh^{\frac{2}{3}-2\alpha})(1-\tilde Ch^{\tilde\rho})(1-Ch^{3\rho-\frac{2}{3}})\gamma_{0}^{\frac{4}{3p}}\lambda^{[1]}(p )h^{2-\frac{4}{3p}}\sum_{\ell}\|\tilde\chi_{\alpha,\rho,h}^{[\ell]}\psi\|^2_{\sL^p}\\
\geq (1-Ch^{\alpha-\rho})(1-\tilde Dh^{\frac{2}{3}-2\alpha})(1-\tilde Ch^{\tilde\rho})(1-Ch^{3\rho-\frac{2}{3}})\gamma_{0}^{\frac{4}{3p}}\lambda^{[1]}(p )h^{2-\frac{4}{3p}}\|\psi\|_{\sL^p(\Omega)}^2.
\end{multline*}
Let us consider the case when
$$\alpha-\rho=\frac{2}{3}-2\alpha=3\rho-\frac{2}{3}$$
which provides $\alpha=\frac{10}{33}$ and $\rho=\frac{8}{33}$. Unfortunately, the second constraint in \eqref{constraints} cannot be satisfied for this choice. Therefore, we take rather
$$\rho=\frac{9}{33},\quad \alpha=\frac{10}{33}$$
and we take $\tilde\rho\in\left(0,\frac{1}{22}\right)$ so that
$$\mathcal{Q}_{h,\A}(\psi)\geq(1-Ch^{\frac{1}{33}})\gamma_{0}^{\frac{4}{3p}}\lambda^{[1]}(p )h^{2-\frac{4}{3p}}\|\psi\|_{\sL^p(\Omega)}^2.$$
\end{proof}

\appendix
\section{Concentration-compactness method}\label{A}
In this section, for the convenience of the reader, we recall the strategy used in \cite{EL89}. We consider the minimization problem (with $p>2$)
$$\lambda=\inf_{\underset{ \|\psi\|_{\sL^p(\R^2)}=1}{u\in \sH^1_{\A}(\R^2),}}Q_{\A}(u),\qquad Q_{\A}(u)=\int_{\R^2}|(-i\nabla+\A)u|^2\dx\x,$$
with a non-zero magnetic field $\B$. Let us also introduce the following norm defined on $\Dom(Q_{\A})=\sH^1_{\A}$ by $\|u\|_{\sH^1_{\A}}=\left(\|u\|^2_{\sL^2}+Q_{\A}(u)\right)^{\frac{1}{2}}$. We have $\lambda>0$ and we introduce a minimizing sequence $(u_{n})_{n\geq 0}$. Let us consider the density measure $\mu_{n}=(|u_{n}|^2+|(-i\nabla+\A)u_{n}|^2)\dx\x$ whose total mass $\mu_{n}(\R^2)$ converges to $\mu>0$, with $\|u_{n}\|_{\sL^p(\Omega)}=1$. By using a slight adaptation of \cite[Lemma I.1]{Lions84}, there are three possible behaviors for the sequence $(\mu_{n})_{n\in\N}$. 

The first one is vanishing, that is
$$\forall R>0,\quad\lim_{n\to+\infty}\sup_{\x\in\R^2}\mu_{n}(D(\x,R))=0.$$
With the diamagnetic inequality, this implies that $\sup_{\x\in\R^2}\||u_{n}|\|_{\sH^1(D(\x,R))}\to 0$. By Sobolev embedding and \cite[Lemma I.1]{Lions84b}, it follows that $\|u_{n}\|_{\sL^p}\to 0$. This is a contradiction.

The second possible behavior is dichotomy, that is
\begin{align*}
\exists \beta\in(0,\mu), \forall \eps>0, \exists R_{1}>0, \exists R_{n}\to+\infty, (y_{n})_{n\in\N},\\
|\mu_{n}(D(y_{n}, R_{1}))-\beta|\leq\eps,\quad |\mu_{n}(\complement D(y_{n}, R_{n}))-(\mu-\beta)|\leq\eps,
\end{align*}
so that
$$|\mu_{n}(D(y_{n}, R_{1})\cup \complement D(y_{n}, R_{n}))-\mu|\leq2\eps.$$
By using cutoff functions and the \enquote{IMS} formula, we can find $\chi_{n,1}$ and $\chi_{n,2}$, with supports such that $\dist(\supp(\chi_{n,1}),\supp(\chi_{n,2}))\to+\infty$ such that
\begin{equation}\label{split-mass}
\left|\|\chi_{n,1}u_{n}\|^2_{\sH^1_{\A}}-\beta\right|\leq \eps, \quad \left|\|\chi_{n,2}u_{n}\|^2_{\sH^1_{\A}}-(\mu-\beta)\right|\leq \eps,
\end{equation}
$$\|u_{n}-\chi_{n,1}u_{n}-\chi_{n,2}u_{n}\|_{\sH^1_{\A}}\leq C\eps.$$
From the last inequality, we get
\begin{equation}\label{split-Q}
\left|Q_{\A}(u_{n})-Q_{\A}(\chi_{n,1}u_{n})-Q_{\A}(\chi_{n,2}u_{n})\right|\leq C\eps
\end{equation}
and also, by Sobolev embedding,
$$\|u_{n}-\chi_{n,1}u_{n}-\chi_{n,2}u_{n}\|_{\sL^p}\leq C\eps.$$
This implies
$$|\|u_{n}\|_{\sL^p}-\|\chi_{n,1}u_{n}+\chi_{n,2}u_{n}\|_{\sL^p}|\leq C\eps$$
so that
\begin{equation}\label{split-Lp}
|\|u_{n}\|^p_{\sL^p}-\|\chi_{n,1}u_{n}\|_{\sL^p}^p-\|\chi_{n,2}u_{n}\|^p_{\sL^p}|\leq \tilde C\eps.
\end{equation}
We notice that, for $j\in{1,2}$, due to \eqref{split-mass}, 
\begin{equation}\label{positive-mass}
\liminf_{n\to+\infty} Q_{\A}(\chi_{n,j}u_{n})>0.
\end{equation}
We let $\alpha_{n}=\|\chi_{n,1}u_{n}\|_{\sL^p}^p$. We may assume that $(\alpha_{n})$ converges to some $\alpha\in[0,1]$. If $\alpha=1$, we get, by definition of $\lambda$, that
$$Q_{\A}(\chi_{n,1}u_{n})\geq \lambda\alpha_{n}^{\frac{2}{p}}\geq \lambda(1-C\eps).$$
since $(u_{n})$ is a minimizing sequence and due to \eqref{split-Q}, we have $Q_{\A}(\chi_{n,2}u_{n})\leq C\eps$ which contradicts \eqref{positive-mass}. In the same way, we have $\alpha\neq 0$.

With \eqref{split-Q} and \eqref{split-Lp}, we get
$$\lambda\geq \lambda\alpha_{n}^{\frac{2}{p}}+ \lambda(1-\alpha_{n})^{\frac{2}{p}}-C\eps$$
and thus, for $\alpha\in(0,1)$,
$$\alpha^{\frac{2}{p}}+(1-\alpha)^{\frac{2}{p}}\leq 1.$$
This is not possible when $p>2$.

The third and last possible behavior is tightness up to translation (compactness case). There exists $(\x_{n})_{n\geq 0}$ such that
\begin{equation*}
\forall \eps>0,\quad\exists R>0, \quad \forall n\geq 1,\quad \mu_{n}(\R^2\setminus D(\x_{n},R))\leq \eps.
\end{equation*}

\bibliographystyle{mnachrn}
\bibliography{bib-nonlinear}

\begin{thebibliography}{10}

\bibitem{Agmon85}
{\sc S.~Agmon}.
\newblock Bounds on exponential decay of eigenfunctions of {S}chr\"odinger
  operators.
\newblock In {\em Schr\"odinger operators ({C}omo, 1984)}, volume 1159 of {\em
  Lecture Notes in Math.}, pages 1--38. Springer, Berlin 1985.

\bibitem{AHS}
{\sc J.~Avron, I.~Herbst, B.~Simon}.
\newblock Schr\"odinger operators with magnetic fields. {I}. {G}eneral
  interactions.
\newblock {\em Duke Math. J.} {\bf 45}(4) (1978) 847--883.

\bibitem{CFKS87}
{\sc H.~L. Cycon, R.~G. Froese, W.~Kirsch, B.~Simon}.
\newblock {\em Schr\"odinger operators with application to quantum mechanics
  and global geometry}.
\newblock Texts and Monographs in Physics. Springer-Verlag, Berlin, study
  edition 1987.

\bibitem{dPF97}
{\sc M.~del Pino, P.~L. Felmer}.
\newblock Semi-classical states for nonlinear {S}chr\"odinger equations.
\newblock {\em J. Funct. Anal.} {\bf 149}(1) (1997) 245--265.

\bibitem{diCS14}
{\sc J.~Di~Cosmo, J.~Van~Schaftingen}.
\newblock {Semiclassical stationary states for nonlinear Schr\"odinger
  equations with a strong external magnetic field}.
\newblock {\em Preprint, arXiv:1312.5467}  (2014).

\bibitem{DomRay13}
{\sc N.~Dombrowski, N.~Raymond}.
\newblock Semiclassical analysis with vanishing magnetic fields.
\newblock {\em Journal of Spectral Theory} {\bf 3}(3) (2013).

\bibitem{EL89}
{\sc M.~J. Esteban, P.-L. Lions}.
\newblock Stationary solutions of nonlinear {S}chr\"odinger equations with an
  external magnetic field.
\newblock In {\em Partial differential equations and the calculus of
  variations, {V}ol.\ {I}}, volume~1 of {\em Progr. Nonlinear Differential
  Equations Appl.}, pages 401--449. Birkh\"auser Boston, Boston, MA 1989.

\bibitem{FouHel10}
{\sc S.~Fournais, B.~Helffer}.
\newblock {\em Spectral methods in surface superconductivity}.
\newblock Progress in Nonlinear Differential Equations and their Applications,
  77. Birkh\"auser Boston Inc., Boston, MA 2010.

\bibitem{FouPer13}
{\sc S.~Fournais, M.~Persson}.
\newblock A uniqueness theorem for higher order anharmonic oscillators.
\newblock {\em Journal of Spectral Theory}  (To appear).

\bibitem{Hel88}
{\sc B.~Helffer}.
\newblock {\em Semi-classical analysis for the {S}chr\"odinger operator and
  applications}, volume 1336 of {\em Lecture Notes in Mathematics}.
\newblock Springer-Verlag, Berlin 1988.

\bibitem{HelKo14}
{\sc B.~Helffer, Y.~A. Kordyukov}.
\newblock {Semiclassical spectral asymptotics for a magnetic Schr\"odinger
  operator with non-vanishing magnetic field}.
\newblock {\em Preprint}  (2014).

\bibitem{HelMo96}
{\sc B.~Helffer, A.~Mohamed}.
\newblock Semiclassical analysis for the ground state energy of a
  {S}chr\"odinger operator with magnetic wells.
\newblock {\em J. Funct. Anal.} {\bf 138}(1) (1996) 40--81.

\bibitem{HelMo01}
{\sc B.~Helffer, A.~Morame}.
\newblock Magnetic bottles in connection with superconductivity.
\newblock {\em J. Funct. Anal.} {\bf 185}(2) (2001) 604--680.

\bibitem{HelSj84}
{\sc B.~Helffer, J.~Sj{\"o}strand}.
\newblock Multiple wells in the semiclassical limit. {I}.
\newblock {\em Comm. Partial Differential Equations} {\bf 9}(4) (1984)
  337--408.

\bibitem{K00}
{\sc K.~Kurata}.
\newblock Existence and semi-classical limit of the least energy solution to a
  nonlinear {S}chr\"odinger equation with electromagnetic fields.
\newblock {\em Nonlinear Anal.} {\bf 41}(5-6, Ser. A: Theory Methods) (2000)
  763--778.

\bibitem{Lions84}
{\sc P.-L. Lions}.
\newblock The concentration-compactness principle in the calculus of
  variations. {T}he locally compact case. {I}.
\newblock {\em Ann. Inst. H. Poincar\'e Anal. Non Lin\'eaire} {\bf 1}(2) (1984)
  109--145.

\bibitem{Lions84b}
{\sc P.-L. Lions}.
\newblock The concentration-compactness principle in the calculus of
  variations. {T}he locally compact case. {II}.
\newblock {\em Ann. Inst. H. Poincar\'e Anal. Non Lin\'eaire} {\bf 1}(4) (1984)
  223--283.

\bibitem{Montgomery95}
{\sc R.~Montgomery}.
\newblock Hearing the zero locus of a magnetic field.
\newblock {\em Comm. Math. Phys.} {\bf 168}(3) (1995) 651--675.

\bibitem{Persson60}
{\sc A.~Persson}.
\newblock Bounds for the discrete part of the spectrum of a semi-bounded
  {S}chr\"odinger operator.
\newblock {\em Math. Scand.} {\bf 8} (1960) 143--153.

\bibitem{Ray14}
{\sc N.~Raymond}.
\newblock {\em {Little Magnetic Book}}.
\newblock arXiv: 1405.7912 2014.

\bibitem{ReSi78}
{\sc M.~Reed, B.~Simon}.
\newblock {\em Methods of modern mathematical physics. {IV}. {A}nalysis of
  operators}.
\newblock Academic Press [Harcourt Brace Jovanovich Publishers], New York 1978.

\bibitem{W93}
{\sc X.~Wang}.
\newblock On concentration of positive bound states of nonlinear
  {S}chr\"odinger equations.
\newblock {\em Comm. Math. Phys.} {\bf 153}(2) (1993) 229--244.

\end{thebibliography}
\end{document}